\theoremstyle{plain}
\newtheorem{thm}{Theorem}[section] 
\newtheorem{lem}[thm]{Lemma}
\newtheorem{prop}[thm]{Proposition}
\newtheorem{cor}[thm]{Corollary}
\newtheorem{introtheorem}{Theorem}[section]
\theoremstyle{definition}
\newtheorem{defn}[thm]{Definition} 
\newtheorem{rem}[thm]{Remark}
\newtheorem{notation}[thm]{Notation}
\newtheorem*{introdefn}{Definition}
\newtheorem{setup}[thm]{Set-up}
\newcommand{\be}{\begin{enumerate}[label=(\alph*) ,leftmargin=*]}
\newcommand{\ben}{\begin{enumerate}[label=(\arabic*), leftmargin=*]}
\newcommand{\ee}{ \end{enumerate} }
\newcommand{\Ker}{\mathsf{Ker}}
\newcommand{\Coker}{\mathsf{Coker}}
\newcommand{\Hom}{\mathsf{Hom}}
\renewcommand{\mod}{\mathsf{mod}}
\def\blfootnote{\xdef\@thefnmark{}\@footnotetext}
\title{Torsion pairs and quasi-abelian categories}
\author{Aran Tattar}
\date{\vspace{-4ex}}
\begin{document}

\blfootnote{2010 Mathematics Subject Classification: 18E05 (18E10 19E40).}
\blfootnote{Keywords: Quasi-abelian category, torsion class, torsion pair, Harder-Narasimhan filtration.}

\maketitle
\begin{abstract}
We define torsion pairs for quasi-abelian categories and give several characterisations. We show that many of the torsion theoretic concepts translate from abelian categories to quasi-abelian categories. As an application, we generalise the recently defined algebraic Harder-Narasimhan filtrations to quasi-abelian categories. 
\end{abstract}

\section{Introduction}

Torsion classes were introduced for abelian categories by Dickson \cite{dickson1966torsion} to generalise the notion of torsion and torsionfree groups.  Since then they have been widely studied in various contexts including ($\tau$-)tilting theory \cite{adachi2014tau}, \cite{happel1996tilting}, lattice theory \cite{demonet2017lattice} and, more recently, stability conditions \cite{brustle2018stability}, \cite{treffinger2018algebraic}.

Quasi-abelian categories are a particular class of exact categories  (in the sense of Quillen \cite{quillen1973higher}) whose maximal exact structure (\cite{rump2011maximal}, \cite{sieg2011maximal}) coincides with the class of all short exact sequences in the category (see Definition \ref{defnn}).  As the name suggests, they are a weaker structure than abelian categories. Quasi-abelian categories appear naturally in cluster theory \cite{SHAH2019313}  and in the context of Bridgeland's stability conditions \cite{bridgeland2007stability}.  Of particular interest to us is their appearence in torsion theory as observed by  Bondal \& Van den Bergh \cite{bondal2003generators} and Rump \cite{rump2001almost}:  Each torsion(free) class in an abelian category is quasi-abelian and every quasi-abelian category, $\mathcal{Q}$, appears as the torsionfree class of a `left associated' abelian category $\mathcal{L}_\mathcal{Q}$ and as a torsion class of an abelian category $\mathcal{R}_\mathcal{Q}$.

In this paper we seek to exploit this relationship to define and study torsion classes in quasi-abelian categories by describing torsion classes of quasi-abelian categories in terms of the torsion(free) classes in the associated abelian category.  We note that torsion pairs in pre-abelian and semi-abelian categories, which are weaker structures still than quasi-abelian categories, have been studied in \cite{janelidze2007characterization}. In this more general context, torsion pairs no longer have the well-known characterisations that they have in the abelian set up. In \cite{bourn2006torsion} torsion theory in non-abelian, so-called homological categories has also been considered. 

 Based on a characterisation of torsion pairs in abelian categories \cite{dickson1966torsion}, we define a torsion pair for a quasi-abelian category as follows.
\begin{introdefn} (Definition \ref{torsiondefn})
Let $\mathcal{Q}$ be a quasi-abelian category. A \textit{torsion pair in $\mathcal{Q}$} is an ordered pair $(\mathcal{T}, \mathcal{F})$ of full subcategories of $\mathcal{Q}$ satisfying the following.
\be 
\item $\Hom_\mathcal{Q}(\mathcal{T}, \mathcal{F}) = 0$.
\item For all $M$ in $\mathcal{Q}$ there exists a short exact sequence 
\[
\begin{tikzcd} 0 \arrow[r] &\leftidx{_\mathcal{T}}{M} \arrow[r] & M \arrow[r] & M_\mathcal{F} \arrow[r] & 0 \end{tikzcd} 
\]  with $\leftidx{_\mathcal{T}}{M}  \in \mathcal{T}$ and $M_\mathcal{F} \in \mathcal{F}$. 
\ee In this case we call $\mathcal{T}$ a \textit{torsion class} and $\mathcal{F}$ a \textit{torsionfree class}. 
\end{introdefn} 
 We establish a correspondence between certain torsion pairs in an abelian
category and torsion pairs in a related quasi-abelian category.
 \begin{introtheorem} \label{thmA}(Theorems \ref{qab} and \ref{torsbij1}) Let $\mathcal{A}$ be an abelian category and let $(\mathcal{C}, \mathcal{D})$, $(\mathcal{C}', \mathcal{D}')$ be torsion pairs in $\mathcal{A}$ such that $\mathcal{C}\subseteq \mathcal{C}'$. 
 Then the intersection $\mathcal{C}' \cap \mathcal{D}$ is quasi-abelian and there is an inclusion preserving bijection:
\begin{align*} \{(\mathcal{X}, \mathcal{Y}) \text{ torsion pair in } \mathcal{A}\ |\ \mathcal{C} \subseteq \mathcal{X} \subseteq \mathcal{C}'\} & \longleftrightarrow  \{ (\mathcal{T}, \mathcal{F}) \text{ torsion pair in }\mathcal{C}'\cap \mathcal{D} \} \\ (\mathcal{X}, \mathcal{Y}) & \longmapsto  (\mathcal{X} \cap \mathcal{D}, \mathcal{Y}\cap \mathcal{C}') \\ (\mathcal{C}\ast \mathcal{T}, \mathcal{F} \ast \mathcal{D}') & \longmapsfrom  (\mathcal{T}, \mathcal{F}).
\end{align*} 
\end{introtheorem}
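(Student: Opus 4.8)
My plan is to handle the two assertions separately: first that $\mathcal{Q} := \mathcal{C}' \cap \mathcal{D}$ is quasi-abelian (Theorem~\ref{qab}), then that $\Phi\colon (\mathcal{X},\mathcal{Y}) \mapsto (\mathcal{X}\cap\mathcal{D},\, \mathcal{Y}\cap\mathcal{C}')$ and $\Psi\colon (\mathcal{T},\mathcal{F}) \mapsto (\mathcal{C}\ast\mathcal{T},\, \mathcal{F}\ast\mathcal{D}')$ are mutually inverse order-preserving bijections (Theorem~\ref{torsbij1}). I will use throughout that $\mathcal{C},\mathcal{C}'$ are closed under quotients and extensions, that $\mathcal{D},\mathcal{D}'$ are closed under subobjects and extensions, that $\mathcal{C}\cap\mathcal{D}=0=\mathcal{C}'\cap\mathcal{D}'$, and that for a torsion pair $(\mathcal{X},\mathcal{Y})$ in $\mathcal{A}$ the condition $\mathcal{C}\subseteq\mathcal{X}\subseteq\mathcal{C}'$ is equivalent to $\mathcal{D}'\subseteq\mathcal{Y}\subseteq\mathcal{D}$ (since $\mathcal{D}=\{N\mid\Hom_\mathcal{A}(\mathcal{C},N)=0\}$ and $\mathcal{Y}=\{N\mid\Hom_\mathcal{A}(\mathcal{X},N)=0\}$). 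For quasi-abelianness I would first show $\mathcal{Q}$ is pre-abelian, exhibiting $\im_\mathcal{A}f\in\mathcal{Q}$ (a quotient of $X\in\mathcal{C}'$ and a subobject of $Y\in\mathcal{D}$) and computing $\ker_\mathcal{Q}f={}_{\mathcal{C}'}(\ker_\mathcal{A}f)$, $\coker_\mathcal{Q}f=(\coker_\mathcal{A}f)_{\mathcal{D}}$ for $f\colon X\to Y$ in $\mathcal{Q}$, using that $\ker_\mathcal{A}f\in\mathcal{D}$, $\coker_\mathcal{A}f\in\mathcal{C}'$, and that the torsion radical/coradical absorbs all maps from $\mathcal{C}'$, resp.\ to $\mathcal{D}$. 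Since $\mathcal{Q}$ is closed under extensions in $\mathcal{A}$, its maximal exact structure is the class of short exact sequences of $\mathcal{A}$ whose three terms lie in $\mathcal{Q}$; I would then verify stability of kernels under pushout and cokernels under pullback directly from these closure properties, or instead note that $(\mathcal{C},\mathcal{Q})$ exhibits $\mathcal{Q}$ as a torsionfree class of $\mathcal{C}'$ and $(\mathcal{Q},\mathcal{D}')$ exhibits it as a torsion class of $\mathcal{D}$, and invoke the Bondal--Van den Bergh and Rump results. This description of the exact structure of $\mathcal{Q}$ is what the rest of the argument rests on.

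For well-definedness of $\Phi$ I would check: $\mathcal{X}\cap\mathcal{D}\subseteq\mathcal{C}'\cap\mathcal{D}=\mathcal{Q}$ and $\mathcal{Y}\cap\mathcal{C}'\subseteq\mathcal{D}\cap\mathcal{C}'=\mathcal{Q}$; orthogonality $\Hom_\mathcal{Q}(\mathcal{X}\cap\mathcal{D},\mathcal{Y}\cap\mathcal{C}')=0$ is inherited from $\Hom_\mathcal{A}(\mathcal{X},\mathcal{Y})=0$; and for $M\in\mathcal{Q}$ the $(\mathcal{X},\mathcal{Y})$-torsion sequence $0\to{}_\mathcal{X}M\to M\to M_\mathcal{Y}\to0$ in $\mathcal{A}$ already has ${}_\mathcal{X}M\in\mathcal{X}\cap\mathcal{D}$ (a subobject of $M\in\mathcal{D}$) and $M_\mathcal{Y}\in\mathcal{Y}\cap\mathcal{C}'$ (a quotient of $M\in\mathcal{C}'$), so it is a short exact sequence in $\mathcal{Q}$ of the required shape. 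For $\Psi$: the inclusions $\mathcal{C}\subseteq\mathcal{C}\ast\mathcal{T}\subseteq\mathcal{C}'$ and $\mathcal{D}'\subseteq\mathcal{F}\ast\mathcal{D}'\subseteq\mathcal{D}$ are immediate from extension-closure of $\mathcal{C}'$ and $\mathcal{D}$; orthogonality $\Hom_\mathcal{A}(\mathcal{C}\ast\mathcal{T},\mathcal{F}\ast\mathcal{D}')=0$ is obtained by peeling a morphism $g\colon M\to N$, presented by $0\to C\to M\to T\to0$ and $0\to F\to N\to D\to0$, successively through $\Hom_\mathcal{A}(\mathcal{C},\mathcal{D})=0$ (so $g$ vanishes on $C$ and descends to $T\to N$), then $\Hom_\mathcal{A}(\mathcal{C}',\mathcal{D}')=0$ (so $T\to N$ factors through $F$), then $\Hom_\mathcal{Q}(\mathcal{T},\mathcal{F})=0$; and the torsion sequence of an arbitrary $M\in\mathcal{A}$ I would build from the three-step filtration ${}_\mathcal{C}M\subseteq M_1\subseteq M_2\subseteq M$ obtained by truncating $M$ at $(\mathcal{C},\mathcal{D})$, then writing $M_\mathcal{D}$ as an extension of $(M_\mathcal{D})_{\mathcal{D}'}\in\mathcal{D}'$ by $Q:={}_{\mathcal{C}'}(M_\mathcal{D})\in\mathcal{Q}$, then truncating $Q$ at $(\mathcal{T},\mathcal{F})$ inside $\mathcal{Q}$; reading off the layers gives $M_1\in\mathcal{C}\ast\mathcal{T}$ and $M/M_1\in\mathcal{F}\ast\mathcal{D}'$, so $0\to M_1\to M\to M/M_1\to0$ is the sought sequence.

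To see $\Phi$ and $\Psi$ are mutually inverse I would verify four identities. $\mathcal{C}\ast(\mathcal{X}\cap\mathcal{D})=\mathcal{X}$: one inclusion is extension-closure of $\mathcal{X}$, the reverse follows since truncating $M\in\mathcal{X}$ at $(\mathcal{C},\mathcal{D})$ produces $M_\mathcal{D}\in\mathcal{X}\cap\mathcal{D}$; dually $(\mathcal{Y}\cap\mathcal{C}')\ast\mathcal{D}'=\mathcal{Y}$, so $\Psi\Phi=\mathrm{id}$. $(\mathcal{C}\ast\mathcal{T})\cap\mathcal{D}=\mathcal{T}$: if $M$ lies in the left side, a presentation $0\to C\to M\to T\to0$ with $C\in\mathcal{C}$, $T\in\mathcal{T}$ forces $C\in\mathcal{C}\cap\mathcal{D}=0$ (as $C$ is a subobject of $M\in\mathcal{D}$), so $M\cong T\in\mathcal{T}$; dually $(\mathcal{F}\ast\mathcal{D}')\cap\mathcal{C}'=\mathcal{F}$ using $\mathcal{C}'\cap\mathcal{D}'=0$, so $\Phi\Psi=\mathrm{id}$. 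Since $\Phi$ and $\Psi$ plainly preserve inclusion of torsion classes, together they form an isomorphism of posets. I expect the genuine obstacle to be the first part — quasi-abelianness of $\mathcal{C}'\cap\mathcal{D}$ together with the precise identification of its exact structure; granting that, each step of the bijection is a short diagram chase through the closure properties of the four subcategories, the only mildly delicate point being the reassembly of the three-step filtration in the well-definedness of $\Psi$.
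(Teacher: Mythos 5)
The gap is in the first half, which you yourself flag as the crux but never actually prove. Your identification of kernels and cokernels in $\mathcal{Q}=\mathcal{C}'\cap\mathcal{D}$ agrees with Lemma \ref{preab}, which holds for \emph{any} two torsion pairs; so the twin hypothesis $\mathcal{C}\subseteq\mathcal{C}'$ must do its work exactly in the step you leave as an assertion. Quasi-abelianness means that \emph{every} kernel--cokernel pair of $\mathcal{Q}$ belongs to an exact structure, and the claim you state without proof -- that the short exact sequences of $\mathcal{Q}$ are precisely the $\mathcal{A}$-exact sequences with terms in $\mathcal{Q}$ -- is the paper's Proposition \ref{exact c'nd}. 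Its proof is not a formal consequence of the closure properties you list: given a kernel--cokernel pair $(f,g)$ in $\mathcal{Q}$, the Snake Lemma identifies ${}_\mathcal{C}(\Coker f)\cong(\Ker g)_{\mathcal{D}'}$, which lies in $\mathcal{C}\cap\mathcal{D}'$ and hence vanishes \emph{because} $\mathcal{C}\subseteq\mathcal{C}'$; nowhere in your sketch does $\mathcal{C}\subseteq\mathcal{C}'$ (equivalently $\mathcal{C}\cap\mathcal{D}'=0$, which is absent from your list of facts) enter this part, and ``verify stability of kernels under pushout directly from these closure properties'' is where that argument has to happen. Your fallback route is not merely incomplete but circular: Bondal--Van den Bergh and Rump prove that torsion(free) classes of \emph{abelian} categories are quasi-abelian, whereas $(\mathcal{C},\mathcal{Q})$ and $(\mathcal{Q},\mathcal{D}')$ are torsion pairs in the merely quasi-abelian categories $\mathcal{C}'$ and $\mathcal{D}$; the quasi-abelian analogue is Proposition \ref{qabinqab}, which the paper deduces from Theorems \ref{qab} and \ref{torsbij1}, i.e.\ from the very statement being proved.

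The bijection half of your proposal is correct and close to the paper's proof of Theorem \ref{torsbij1}: the well-definedness of $(\mathcal{X},\mathcal{Y})\mapsto(\mathcal{X}\cap\mathcal{D},\mathcal{Y}\cap\mathcal{C}')$, the peeling argument for $\Hom_\mathcal{A}(\mathcal{C}\ast\mathcal{T},\mathcal{F}\ast\mathcal{D}')=0$ (slightly slicker than the paper's Snake-Lemma computation of $\Ker f$ and $\Coker f$), the three-step filtration producing the $(\mathcal{C}\ast\mathcal{T},\mathcal{F}\ast\mathcal{D}')$-sequence (the paper's chain of pushouts and pullbacks in different clothing), and the four identities giving mutual inverseness all go through, with your proof of $\mathcal{X}=\mathcal{C}\ast(\mathcal{X}\cap\mathcal{D})$ even a little more direct than the paper's. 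But, as you anticipate, both the meaning of ``torsion pair in $\mathcal{Q}$'' and the refinement step that truncates ${}_{\mathcal{C}'}(M_\mathcal{D})$ at $(\mathcal{T},\mathcal{F})$ inside $\mathcal{Q}$ and then reads the result as a filtration of $M$ in $\mathcal{A}$ rest on Proposition \ref{exact c'nd}; until you supply that argument (or an equivalent direct verification of stability using $\mathcal{C}\cap\mathcal{D}'=0$), the proof is not complete.
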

 We remark that this result generalises the bijection in \cite{jasso2014reduction} where functorially finite torsion classes in abelian categories are considered. Furthermore, independently, in the situation of $\mathcal{C}' \cap \mathcal{D}$ being wide in \cite{asai2019wide} it was shown the above bijection and that it induces an isomorphism of lattices. As a conseqeunce of Theorem \ref{thmA}, we obtain their isomorphism of lattices in our more general setting (see Corollary \ref{asai}). We note that our work differs from \cite{asai2019wide} in that our aim is to understand torsion pairs for quasi-abelian categories, therefore we also consider the cases when $\mathcal{C}' \cap \mathcal{D}$ is not wide. We also note that comparable bijections hold in the triangulated setting \cite[Theorems 5.1 and 5.2]{LX}.
 
 With this machinery in hand, our strategy for studying properties of torsion classes in a quasi-abelian category $\mathcal{Q}$ is to translate the problem to the associated abelian category $\mathcal{L}_\mathcal{Q}$ using the above bijection, utilise the properties of torsion in abelian categories, then translate back to $\mathcal{Q}$.  We see that in general, torsion theoretic concepts of abelian categories carry well to quasi-abelian categories. In particular, we show that the following well-known properties of torsion pairs in the abelian setting  hold in the quasi-abelian case. Furthermore, we characterise when $\mathcal{L}_\mathcal{Q}$ is a small module category over a right noetherian (resp. right artinian ring). 
\begin{introtheorem} (Theorem \ref{Lsmallmod} and Propositions \ref{qabinqab}, \ref{qabaddtors} and \ref{ffqab}) Let $\mathcal{Q}$ be a quasi-abelian category. Then $(\mathcal{T}, \mathcal{F})$ is a torsion pair in $\mathcal{Q}$ if and only if $\mathcal{T}^{\perp} = \mathcal{F}$ and $\mathcal{T}= {}^{\perp}\mathcal{F}$. Moreover, the following hold 
\be 
\item $\mathcal{T}$ and $\mathcal{F}$ are both quasi-abelian categories. 
\item If $\mathcal{Q}$ is noetherian (resp. noetherian and artinian) with respect to subobjects and has a strong projective generator, then $\mathcal{L}_\mathcal{Q} \cong \mathsf{mod}\Lambda$ for a right noetherian (resp. right artinian) ring $\Lambda$. If, furthermore, $\Lambda$ is an artin algebra then $\mathcal{T}$ is functorially finite if and only if $\mathcal{F}$ is functorially finite. 
\ee
\end{introtheorem}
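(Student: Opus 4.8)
\emph{Overall strategy.} Following the method announced in the introduction, I would push everything into the left associated abelian category $\mathcal{L}_{\mathcal{Q}}$, in which $\mathcal{Q}$ sits as a torsionfree class through some torsion pair $(\mathcal{C},\mathcal{Q})$. Applying Theorem~\ref{torsbij1} with $(\mathcal{C},\mathcal{D})=(\mathcal{C},\mathcal{Q})$ and $(\mathcal{C}',\mathcal{D}')=(\mathcal{L}_{\mathcal{Q}},0)$, so that $\mathcal{C}'\cap\mathcal{D}=\mathcal{Q}$, produces an inclusion preserving bijection between the torsion pairs $(\mathcal{T},\mathcal{F})$ of $\mathcal{Q}$ and the torsion pairs $(\mathcal{X},\mathcal{Y})$ of $\mathcal{L}_{\mathcal{Q}}$ with $\mathcal{C}\subseteq\mathcal{X}$, under which $\mathcal{T}=\mathcal{X}\cap\mathcal{Q}$, $\mathcal{F}=\mathcal{Y}$ and $\mathcal{X}=\mathcal{C}\ast\mathcal{T}$. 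Every part of the theorem should then follow by transporting the relevant statement about torsion pairs in abelian categories across this bijection.

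\emph{The characterisation.} The implication from torsion pair to perp conditions is routine in a quasi-abelian category: orthogonality gives $\mathcal{F}\subseteq\mathcal{T}^{\perp}$ and $\mathcal{T}\subseteq{}^{\perp}\mathcal{F}$, and for $M\in\mathcal{T}^{\perp}$ the monomorphism ${}_{\mathcal{T}}M\to M$ in the defining sequence belongs to $\Hom_{\mathcal{Q}}(\mathcal{T},M)=0$, so ${}_{\mathcal{T}}M=0$ (a monomorphism with non-zero source is non-zero) and $M\cong M_{\mathcal{F}}\in\mathcal{F}$; the inclusion ${}^{\perp}\mathcal{F}\subseteq\mathcal{T}$ is dual. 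For the converse, one first notes that $\mathcal{T}={}^{\perp}\mathcal{F}$ is then closed under quotients and extensions and $\mathcal{F}=\mathcal{T}^{\perp}$ under subobjects and extensions — using only that precomposition with an epimorphism, and postcomposition with a monomorphism, is injective on $\Hom$-groups. It remains to build, for each $M\in\mathcal{Q}$, the defining short exact sequence; I would do this by lifting: writing $t$ for the torsion radical of $(\mathcal{C},\mathcal{Q})$, verify that $\mathcal{X}:=\{\,L\in\mathcal{L}_{\mathcal{Q}}:L/tL\in\mathcal{T}\,\}$ is a torsion class of $\mathcal{L}_{\mathcal{Q}}$ containing $\mathcal{C}$ whose torsionfree class intersects $\mathcal{Q}$ in $\mathcal{F}$, and then transport the torsion sequences back via the bijection. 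I expect the verification that $\mathcal{X}$ actually admits torsion sequences in $\mathcal{L}_{\mathcal{Q}}$ — extracted from the closure properties of $\mathcal{T}$ together with the way objects of $\mathcal{L}_{\mathcal{Q}}$ are assembled from $\mathcal{C}$ and $\mathcal{Q}$ — to be the main obstacle; it is the one point that cannot be handled purely formally.

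\emph{Part (a).} Through the bijection, $\mathcal{F}=\mathcal{Y}$ is a torsionfree class of the abelian category $\mathcal{L}_{\mathcal{Q}}$, hence quasi-abelian by the result of Bondal--Van den Bergh and Rump recalled in the introduction. And $\mathcal{T}=\mathcal{X}\cap\mathcal{Q}$ is an intersection of precisely the shape treated in Theorem~\ref{qab}: applied to the torsion pairs $(\mathcal{C},\mathcal{Q})$ and $(\mathcal{X},\mathcal{Y})$ of $\mathcal{L}_{\mathcal{Q}}$, which satisfy $\mathcal{C}\subseteq\mathcal{X}$, that theorem gives at once that $\mathcal{X}\cap\mathcal{Q}$ is quasi-abelian.

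\emph{Part (b).} For the first assertion I would check that a strong projective generator $P$ of $\mathcal{Q}$ remains a projective generator of $\mathcal{L}_{\mathcal{Q}}$ with respect to which every object is finitely presented, so that the classical recognition theorem for module categories yields $\mathcal{L}_{\mathcal{Q}}\cong\mod\Lambda$ with $\Lambda=\Endo(P)$; the hypothesis that $\mathcal{Q}$ is noetherian (resp.\ noetherian and artinian) with respect to subobjects then transports to $\Lambda_{\Lambda}$ being noetherian (resp.\ of finite length), i.e.\ $\Lambda$ right noetherian (resp.\ right artinian). For the functorial finiteness statement, with $\Lambda$ an artin algebra, I would establish a transport lemma — a subcategory of $\mathcal{Q}$ is functorially finite in $\mathcal{Q}$ if and only if its image under the bijection is functorially finite in $\mathcal{L}_{\mathcal{Q}}$, built using that $\mathcal{Q}\hookrightarrow\mathcal{L}_{\mathcal{Q}}$ is reflective — and then conclude that the conditions that $\mathcal{T}$ is functorially finite in $\mathcal{Q}$, that $\mathcal{X}=\mathcal{C}\ast\mathcal{T}$ is functorially finite in $\mathcal{L}_{\mathcal{Q}}$, that $\mathcal{Y}=\mathcal{F}$ is functorially finite in $\mathcal{L}_{\mathcal{Q}}$, and that $\mathcal{F}$ is functorially finite in $\mathcal{Q}$ are all equivalent, the equivalence of the two middle ones being the known fact that the two classes of a torsion pair in the module category of an artin algebra are simultaneously functorially finite. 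The contravariant half of the transport lemma, concerning objects of $\mathcal{L}_{\mathcal{Q}}$ not lying in $\mathcal{Q}$, is the subtle point in this part.
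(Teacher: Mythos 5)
Your overall strategy is the paper's own: transport everything along the bijection of Theorem \ref{torsbij1} applied to the twin torsion pairs $[({}^{\perp}\mathcal{Q},\mathcal{Q}),(\mathcal{L}_\mathcal{Q},0)]$, and your part (a) and your route to $\mathcal{L}_\mathcal{Q}\cong\mathsf{mod}\Lambda$ (projective generator plus transfer of the chain conditions, i.e.\ Corollary \ref{projgen}, Lemma \ref{Noethsubobj} and the recognition theorem) agree with Propositions \ref{qabinqab} and Theorem \ref{Lsmallmod} and are fine as sketched. However, the two steps you explicitly leave open are precisely the substance of the remaining assertions, so the proposal has genuine gaps. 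For the converse of the characterisation you postpone verifying that $\mathcal{X}=\{L\in\mathcal{L}_\mathcal{Q}\mid L/tL\in\mathcal{T}\}$ (which equals ${}^{\perp}\mathcal{Q}\ast\mathcal{T}$ by Lemma \ref{starstar}) is a torsion class of $\mathcal{L}_\mathcal{Q}$, calling this ``the one point that cannot be handled purely formally''. In fact it can, and this is the content of Proposition \ref{qabaddtors}: since $\mathcal{L}_\mathcal{Q}$ is abelian it suffices to check the two perpendicularity equalities there, and both reduce to the assumed equalities in $\mathcal{Q}$ via the $({}^{\perp}\mathcal{Q},\mathcal{Q})$-canonical sequence --- if $\Hom_{\mathcal{L}_\mathcal{Q}}(\mathcal{X},N)=0$ then $N\in({}^{\perp}\mathcal{Q})^{\perp}=\mathcal{Q}$ and hence $N\in\mathcal{T}^{\perp_{\mathcal{Q}}}=\mathcal{F}$, while if $\Hom_{\mathcal{L}_\mathcal{Q}}(M,\mathcal{F})=0$ then precomposing with $M\twoheadrightarrow M_\mathcal{Q}$ gives $\Hom_\mathcal{Q}(M_\mathcal{Q},\mathcal{F})=0$, so $M_\mathcal{Q}\in{}^{\perp_{\mathcal{Q}}}\mathcal{F}=\mathcal{T}$ and $M\in{}^{\perp}\mathcal{Q}\ast\mathcal{T}$. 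Without this (or an equivalent) argument the ``if'' direction of the characterisation is not established.

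The second open point, the contravariant half of your transport lemma, is equally essential and is exactly where the artin algebra hypothesis enters: reflectivity of $\mathcal{Q}\hookrightarrow\mathcal{L}_\mathcal{Q}$ only yields left $\mathcal{Q}$-approximations, so it cannot produce right approximations of objects of $\mathcal{L}_\mathcal{Q}$ outside $\mathcal{Q}$. The paper closes this via Lemma \ref{Renoughproj}: $\mathcal{Q}$ is a cotilting torsionfree class of $\mathcal{L}_\mathcal{Q}\cong\mathsf{mod}\Lambda$, hence functorially finite by Smal{\o}'s theorem (Proposition \ref{smalo}), and a right $\mathcal{F}$-approximation of $X\in\mathcal{L}_\mathcal{Q}$ is then the composite of a right $\mathcal{Q}$-approximation of $X$ with a right $\mathcal{F}$-approximation taken inside $\mathcal{Q}$. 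Moreover, the direction ``$\mathcal{T}$ functorially finite in $\mathcal{Q}$ implies ${}^{\perp}\mathcal{Q}\ast\mathcal{T}$ functorially finite in $\mathcal{L}_\mathcal{Q}$'' of your lemma is itself non-formal: in the paper this is Proposition \ref{fftorsbij}(b), proved by passing to the bounded derived category and invoking Lemma \ref{io5.33}, and it needs enough projectives and injectives in $\mathcal{L}_\mathcal{Q}$. Some such arguments must be supplied at both points; once they are, your chain of equivalences between functorial finiteness of $\mathcal{T}$ in $\mathcal{Q}$, of ${}^{\perp}\mathcal{Q}\ast\mathcal{T}$ in $\mathcal{L}_\mathcal{Q}$, of $\mathcal{F}$ in $\mathcal{L}_\mathcal{Q}$ and of $\mathcal{F}$ in $\mathcal{Q}$ is exactly the argument of Proposition \ref{ffqab}.
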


 As an application of our results, in Section \ref{hnfilt} we show that quasi-abelian categories admit algebraic Harder-Narasimhan filtrations as recently studied in the abelian context in \cite{treffinger2018algebraic}. Such filtrations were extensively studied in \cite{reineke2003harder} and named after  Harder and Narasimhan for their work \cite{harder1975cohomology}. Furthermore, Rudakov \cite{rudakov1997stability} showed that every stability function on an abelian category induces a Harder-Narasimhan filtration of each object. In \cite{baumann2014affine} and \cite{brustle2018stability} it was observed, for abelian categories, that each stability function induces a chain of torsion classes; and in \cite{treffinger2018algebraic} the above is generalised to show that every chain of torsion classes satisfying mild finiteness conditions in an abelian category induces Harder-Narasimhan filtrations. We show that the same is true for chains of torsion classes in quasi-abelian categories. Namely, we show the following.
 \begin{introtheorem} (Corollary \ref{cor1}) Every chain of torsion classes satisfying mild finiteness conditions (see Definition \ref{CTA}) in a quasi-abelian category induces a  Harder-Narasimhan filtration of each object that is unique up to isomorphism. 
 \end{introtheorem}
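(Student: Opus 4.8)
The plan is to reduce the statement to the abelian case via the correspondence of Theorem~\ref{thmA}. Let $\mathcal{Q}$ be a quasi-abelian category equipped with a chain of torsion classes $(\eta_i)_{i \in I}$ satisfying the finiteness conditions of Definition~\ref{CTA}, and fix the embedding of $\mathcal{Q}$ as the torsionfree class of its left associated abelian category $\mathcal{L}_\mathcal{Q}$, so $\mathcal{Q} = \mathcal{D}$ for a torsion pair $(\mathcal{C},\mathcal{D})$ in $\mathcal{L}_\mathcal{Q}$; here we take $\mathcal{C}' = \mathcal{L}_\mathcal{Q}$ and $\mathcal{D}' = 0$, so that $\mathcal{C}' \cap \mathcal{D} = \mathcal{Q}$. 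First I would apply the bijection of Theorem~\ref{thmA} pointwise to transport the chain $(\eta_i)$ of torsion classes in $\mathcal{Q}$ to a chain of torsion classes $(\tilde\eta_i) = (\mathcal{C} \ast \eta_i)$ in $\mathcal{L}_\mathcal{Q}$, using that the bijection is inclusion preserving so the total order is respected. The next step is to check that the transported chain still satisfies the finiteness conditions of Definition~\ref{CTA} in $\mathcal{L}_\mathcal{Q}$; this is where I expect the bulk of the bookkeeping to lie, since the conditions typically constrain the behaviour of the chain on each object, and one must relate subobjects and quotients of $M \in \mathcal{Q}$ (computed in $\mathcal{Q}$) to those computed in $\mathcal{L}_\mathcal{Q}$ — the embedding is exact and reflects short exact sequences, so a chain that stabilises / has no infinite descending or ascending pattern on objects of $\mathcal{Q}$ should retain that property, but the $\mathcal{C}$-part $\mathcal{C} \ast (-)$ must be shown not to introduce pathology.

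Once the transported chain is known to satisfy the hypotheses, I would invoke the abelian result of \cite{treffinger2018algebraic}: every chain of torsion classes satisfying those finiteness conditions in an abelian category induces a Harder--Narasimhan filtration of each object, unique up to isomorphism. Applying this to an object $M$ of $\mathcal{Q} \subseteq \mathcal{L}_\mathcal{Q}$ yields a filtration $0 = M_0 \subseteq M_1 \subseteq \cdots \subseteq M_n = M$ in $\mathcal{L}_\mathcal{Q}$ whose factors are ``semistable'' with strictly decreasing phase. The key point is then to show this filtration lives inside $\mathcal{Q}$: since $\mathcal{Q}$ is closed under subobjects in $\mathcal{L}_\mathcal{Q}$ (being a torsionfree class), every $M_i$ lies in $\mathcal{Q}$; and since $\mathcal{Q}$ is closed under extensions, or alternatively since the relevant quotients $M_i/M_{i-1}$ can be identified with the cokernels computed in $\mathcal{Q}$ once one knows the $M_i$ are in $\mathcal{Q}$ — here I would use that for a monomorphism between objects of a torsionfree class the cokernel in the abelian category need not agree with the one in $\mathcal{Q}$, so care is needed; the correct statement is that the filtration factors in $\mathcal{Q}$ are the torsionfree parts of the abelian factors, but because the $\tilde\eta_i$ arise as $\mathcal{C} \ast \eta_i$ the torsion pairs restrict correctly and the factors are already in $\mathcal{Q}$.

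The final step is to translate the defining properties of the Harder--Narasimhan filtration back along the correspondence: a factor is $\tilde\eta_i$-semistable in $\mathcal{L}_\mathcal{Q}$ (meaning it lies in $\tilde\eta_i \cap \tilde\eta_{i+1}^{\perp}$ in the appropriate sense) if and only if the corresponding object of $\mathcal{Q}$ is $\eta_i$-semistable in $\mathcal{Q}$, which follows from the compatibility $\tilde\eta_i \cap \mathcal{Q} = \eta_i$ and the fact that the torsionfree classes also correspond under the bijection. Uniqueness up to isomorphism transfers immediately because the embedding $\mathcal{Q} \hookrightarrow \mathcal{L}_\mathcal{Q}$ is fully faithful, so two filtrations in $\mathcal{Q}$ are isomorphic iff they are isomorphic in $\mathcal{L}_\mathcal{Q}$. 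The main obstacle, as flagged, is the second step: verifying that the finiteness conditions of Definition~\ref{CTA} are preserved under the transport $\eta_i \mapsto \mathcal{C} \ast \eta_i$, which requires understanding precisely how filtrations and the operation $\mathcal{C} \ast (-)$ interact on objects — I would expect to need a lemma, proved by diagram chasing in $\mathcal{L}_\mathcal{Q}$ together with the defining short exact sequences of the torsion pairs, showing that an object of $\mathcal{Q}$ has no ``infinite'' behaviour with respect to $(\tilde\eta_i)$ precisely when it has none with respect to $(\eta_i)$.
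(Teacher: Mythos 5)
Your proposal is correct and follows essentially the same route as the paper: Corollary \ref{cor1} is obtained there by embedding $\mathcal{Q}$ as the heart of the chain $0 \subset {}^{\perp}\mathcal{Q} \subset \mathcal{L}_\mathcal{Q}$, transporting the chain of torsion classes along $\eta \mapsto {}^{\perp}\mathcal{Q} \ast \eta$ (Set-up \ref{setup1}), invoking the abelian Harder--Narasimhan theorem of \cite{treffinger2018algebraic}, and pulling the resulting filtration back into $\mathcal{Q}$ (Theorem \ref{thm1}). The two points you flag are precisely what the paper supplies: the identification of the semistable subcategories $\mathcal{P}^{\phi(\eta)}_r$ with $\mathcal{P}^{\eta}_r$ (Lemma \ref{slicings}, together with the boundary cases $r=0,1$ handled via Theorem \ref{torsbij1}), and the transfer of the quasi-Noetherian and weakly-Artinian conditions, which the paper absorbs into the identification of $\mathfrak{T}(\mathcal{Q})$ with $\mathfrak{T}_{\mathcal{C}}(\mathcal{Q})$ in Set-up \ref{setup1} using Lemma \ref{starstar}.
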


This article is organised as follows. In Section two, we translate the characterisations of torsion pairs in the abelian setting to the quasi-abelian case and show that, in this case, not all characterisations remain equivalent. This naturally leads to our choice of definition.
In the third Section, we prove that the heart of twin torsion pairs is quasi-abelian. This provides us with a way to generate examples of quasi-abelian categories that are not naturally arising as torsion(free) classes. 
The fourth Section is devoted to proving the bijection of Theorem \ref{thmA}. We furthermore show that, under mild assumptions, this bijection preserves the functorially finiteness of the torsion(free) classes.
In the fifth Section we recall the construction of $\mathcal{L}_\mathcal{Q}$ due to Schneiders \cite{schneiders1999quasi} and give criteria for when it is equivalent to a small module category. We then use the results of the previous sections to  completely characterise torsion pairs for quasi-abelian categories. 
As an application of the newly developed theory, in the final section we show the existence of Harder-Narasimhan filtrations for chains of torsion classes in a quasi-abelian category. Furthermore, we also explore topological properties of the set of chains of torsion classes in a quasi-abelian category.

\smallskip
\textbf{Acknowledgments.} This work was undertaken as part of the author's PhD studies supported by the EPSRC. The author thanks Sibylle Schroll and Hipolito Treffinger for many helpful discussions, Gustavo Jasso for communicating the proof of Proposition \ref{fftorsbij}(b), and the anonymous referee for their comments.

\section{Defining torsion pairs}  \label{tors}

In this paragraph, we define a torsion pair in a pre-abelian category and compare this definition with other candidate formulations coming from the abelian case. We begin by recalling the definitions of the categories that will form the backdrop for our work. Recall that an additive category is a pointed category enriched in abelian groups that admits all binary products and coproducts.  

\begin{defn} \label{defnn} Let $\mathcal{A}$ be an additive category.
\be 
\item \cite[\S 5.4]{Bucur1968} $\mathcal{A}$ is \emph{pre-abelian} if every morphism in $\mathcal{A}$ admits a kernel and a cokernel.
\item \cite[p168]{rump2001almost} $\mathcal{A}$ is \emph{quasi-abelian} (or \emph{almost abelian}) if it is pre-abelian and if cokernels (resp. kernels) in $\mathcal{A}$ are stable under pullback (resp. pushout). 
\ee

\end{defn}

\begin{rem} We make some observations. 
\be 
\item Equivalently, a pre-abelian category $\mathcal{A}$ is quasi-abelian if the class of all short exact squences in $\mathcal{A}$ forms a Quillen exact structure on $\mathcal{A}$. See \cite{rump2011maximal} and \cite{sieg2011maximal} for examples of pre-abelian categories where this it not the case and see \cite{buhler2010exact} for an exposition of exact categories.
\item Any pre-abelian category has split idempotents. Indeed, every idempotent morphism admits a kernel. \ee
\end{rem}

\begin{notation}
Arrows corresponding to kernels (resp. cokernels) will often be decorated as $\rightarrowtail$ (resp. $\twoheadrightarrow$).
\end{notation}
\begin{defn} \label{torsiondefn}
Let $\mathcal{A}$ be a pre-abelian category. A \textit{torsion pair in $\mathcal{A}$} is an ordered pair $(\mathcal{T}, \mathcal{F})$ of full subcategories of $\mathcal{A}$ satisfying the following.
\be 
\item[(T1)] $\Hom_\mathcal{A}(\mathcal{T}, \mathcal{F}) = 0$.
\item[(T2)] For all $M$ in $\mathcal{A}$ there exists a short exact sequence 
\begin{equation}
\begin{tikzcd} 0 \arrow[r] &\leftidx{_\mathcal{T}}{M} \arrow[r, "i_M", tail] & M \arrow[r, "p_M", two heads] & M_\mathcal{F} \arrow[r] & 0 \end{tikzcd} 
\end{equation}  with $\leftidx{_\mathcal{T}}{M}  \in \mathcal{T}$ and $M_\mathcal{F} \in \mathcal{F}$. 
\ee
In this case we call $\mathcal{T}$ a \textit{torsion class}, $\mathcal{F}$ a \textit{torsionfree class} and the short exact sequence in (T2) is called the \textit{$(\mathcal{T}, \mathcal{F})$-canonical short exact sequence of $M$}.
\end{defn}

\begin{prop} \label{adfunk} Let $\mathcal{A}$ be a pre-abelian category. Then a full subcategory $\mathcal{T} \subseteq \mathcal{A}$ is a torsion class in $\mathcal{A}$ if and only if there exists  a fucntor  $\mathsf{t}: \mathcal{A} \to \mathcal{T}$ that is an idempotent and radical kernel subfunctor of the identity such that $\mathcal{T}=\{M \in \mathcal{A} \mid \mathsf{t}M \cong M \}$. Moreover, in this situation such a functor is a right adjoint to the canonical inclusion $ \mathcal{T} \hookrightarrow \mathcal{A}$. 
\end{prop}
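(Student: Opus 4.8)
The plan is to show that both directions rest on a single \emph{universal property}: if $\mathcal{T}$ is a torsion class with torsionfree partner $\mathcal{F}$, then for each $M$ the monomorphism $i_M \colon \leftidx{_\mathcal{T}}{M} \rightarrowtail M$ is terminal among all morphisms into $M$ with source in $\mathcal{T}$. Indeed, given $g \colon T \to M$ with $T \in \mathcal{T}$, the composite $p_M \circ g \colon T \to M_\mathcal{F}$ vanishes by (T1), so $g$ factors through $\ker p_M = i_M$, and the factorisation is unique since $i_M$ is monic. I would record at the outset the two standard pre-abelian facts to be used: a morphism $k$ is a kernel if and only if $k = \ker(\coker k)$, and a monic zero morphism has zero source.

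For the forward implication, set $\mathsf{t}M := \leftidx{_\mathcal{T}}{M}$. The universal property makes $\mathsf{t}M$ unique up to unique isomorphism, and for $f \colon M \to N$ it produces a unique $\mathsf{t}f \colon \mathsf{t}M \to \mathsf{t}N$ with $i_N \circ \mathsf{t}f = f \circ i_M$ (apply the universal property of $i_N$ to $f \circ i_M$, whose source $\mathsf{t}M$ lies in $\mathcal{T}$). Functoriality of $\mathsf{t}$ and naturality of $i \colon \mathsf{t} \Rightarrow \mathrm{id}_\mathcal{A}$ then follow from uniqueness, and the components of $i$ are kernels by (T2), so $\mathsf{t}$ is a kernel subfunctor of the identity landing in $\mathcal{T}$. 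Restated, the universal property is precisely the natural isomorphism $\Hom_\mathcal{A}(\iota T, M) \cong \Hom_\mathcal{T}(T, \mathsf{t}M)$ for $T \in \mathcal{T}$, i.e. $\mathsf{t}$ is right adjoint to the inclusion $\iota \colon \mathcal{T} \hookrightarrow \mathcal{A}$. For $T \in \mathcal{T}$, factoring $\mathrm{id}_T$ through $i_T$ and using that $i_T$ is monic shows $i_T$ is an isomorphism; hence $\mathcal{T} = \{M \mid \mathsf{t}M \cong M\}$, and $\mathsf{t}$ is idempotent because $i_{\mathsf{t}M}$ is an isomorphism (as $\mathsf{t}M \in \mathcal{T}$). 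Finally, for $F \in \mathcal{F}$ the map $i_F \colon \mathsf{t}F \to F$ goes from $\mathcal{T}$ to $\mathcal{F}$, hence is zero by (T1); being monic it forces $\mathsf{t}F = 0$, and applying this to $F = M_\mathcal{F}$ gives $\mathsf{t}(M/\mathsf{t}M) = 0$, so $\mathsf{t}$ is radical.

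For the converse, suppose such a $\mathsf{t}$ is given and set $\mathcal{F} := \{F \in \mathcal{A} \mid \mathsf{t}F = 0\}$. The first step is the equivalence: $T \in \mathcal{T}$ if and only if $i_T$ is an isomorphism. Given any isomorphism $\varphi \colon \mathsf{t}T \xrightarrow{\sim} T$, naturality yields $\varphi \circ i_{\mathsf{t}T} = i_T \circ \mathsf{t}\varphi$, and $i_{\mathsf{t}T}$ is an isomorphism by idempotency while $\mathsf{t}\varphi$ is one as $\mathsf{t}$ is a functor, so $i_T$ is an isomorphism. Then (T1): for $f \colon T \to F$ with $T \in \mathcal{T}$ and $F \in \mathcal{F}$, naturality gives $f \circ i_T = i_F \circ \mathsf{t}f = 0$ since $\mathsf{t}F = 0$, and $i_T$ is epi, so $f = 0$. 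For (T2): since $i_M$ is a kernel, $i_M = \ker(\coker i_M)$, so writing $p_M := \coker(i_M) \colon M \to M_\mathcal{F}$ produces a short exact sequence $0 \to \mathsf{t}M \xrightarrow{i_M} M \xrightarrow{p_M} M_\mathcal{F} \to 0$; here $\mathsf{t}M \in \mathcal{T}$ (it is hit by $\mathsf{t}$, equivalently $i_{\mathsf{t}M}$ is an isomorphism), and $M_\mathcal{F} = M/\mathsf{t}M$ has $\mathsf{t}M_\mathcal{F} = 0$ by the radical property, so $M_\mathcal{F} \in \mathcal{F}$. Thus $(\mathcal{T}, \mathcal{F})$ is a torsion pair, and the adjunction statement follows exactly as in the forward direction from the universal property of $i_M$, using that $i_M$ is monic and $i_T$ is an isomorphism for $T \in \mathcal{T}$.

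The main obstacle is not conceptual but lies in the pre-abelian bookkeeping: one must ensure $i_M$ is a genuine kernel morphism, not merely a monomorphism, so that $0 \to \mathsf{t}M \to M \to \coker i_M \to 0$ is genuinely short exact — this is exactly where the identity $\ker\coker k = k$ for kernels is needed — and one must nail down the equivalence ``$T\in\mathcal{T} \iff i_T$ is an isomorphism'', which is the point where idempotency of $\mathsf{t}$ is indispensable and which simultaneously makes $\mathcal{T}$ well described by $\mathsf{t}$ and powers the proof of (T1).
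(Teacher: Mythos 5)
Your proof is correct and follows essentially the same route as the paper: in the forward direction you set $\mathsf{t}M = {}_{\mathcal{T}}M$ and extract functoriality, idempotency, radicality and the adjunction from (T1) together with the universal property of the kernel $i_M$, and in the converse you set $\mathcal{F} = \{F \mid \mathsf{t}F \cong 0\}$, getting (T2) from the kernel-plus-radical hypotheses and (T1) from the naturality square. The only place you are more explicit than the paper is the claim that $i_T$ is invertible for $T \in \mathcal{T}$ in the converse direction (the paper simply marks that arrow as an isomorphism without comment); note that your justification reads ``idempotent'' as saying the canonical morphism $i_{\mathsf{t}T}$ is invertible rather than merely $\mathsf{t}\mathsf{t}T \cong \mathsf{t}T$ abstractly --- otherwise the step is circular --- but this stronger reading is exactly what the paper's own proof implicitly relies on (and is what its forward direction actually establishes).
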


Recall that a functor $F:\mathcal{A} \to \mathcal{A}$ is
\be 
 \item \textit{idempotent} if $F(FM)\cong FM$ for all $M \in \mathcal{A}$.
\item \textit{radical} if $F(\Coker(FM \rightarrowtail M)) \cong 0 $ for all $M \in \mathcal{A}$.
\item a \textit{kernel subfunctor of the identity} if for all $M \in \mathcal{A}$ there exists a morphism $FM \rightarrowtail M$ which is a kernel 
and furthermore that for all $f: M \to N$ in $\mathcal{A}$ the diagram 
\[ \begin{tikzcd} FM \arrow[r, tail] \arrow[d, "Ff"'] & M \arrow[d, "f"] \\ FN \arrow[r, tail] & N \end{tikzcd} \] commutes.
\ee

\begin{proof}[Proof of Proposition \ref{adfunk}.] ($\Rightarrow$) Let $\mathcal{F}$ be the torsionfree class associated to $\mathcal{T}$. We verify that the assignment  $M \mapsto \leftidx{_\mathcal{T}}{M}$ satisfies the conditions above. Firstly, let $f: M \to N $ in $\mathcal{A}$ then, by (T1),  $p_Nfi_M: \leftidx{_\mathcal{T}}{M} \to N_\mathcal{F}$ is zero, hence by the universal property of kernels, there exists a unique $\leftidx{_\mathcal{T}}{f}:\leftidx{_\mathcal{T}}{M} \to \leftidx{_\mathcal{T}}{N}$ such that $i_N ({}_\mathcal{T}f) = fi_M$. It is clear that this defines a functor $\mathcal{A}\to \mathcal{T}$ which is, by construction, a subfunctor of the identity. 
To see that it is idempotent, consider the $(\mathcal{T}, \mathcal{F})$-canonical short exact sequence of $\leftidx{_\mathcal{T}}{M}$ for any $M \in \mathcal{A}$. 
\[ \begin{tikzcd} 0 \arrow[r] &_\mathcal{T}(_{\mathcal{T}}{M}) \arrow[r, tail, "i_{_\mathcal{T}{M}}"] & {_\mathcal{T}}{M} \arrow[r, "p_{_\mathcal{T}{M}}", two heads] & {({_\mathcal{T}}{M})}_\mathcal{F} \arrow[r] & 0 \end{tikzcd} \]
 and observe that $p_{_\mathcal{T}{M}} = 0$ by (T1) therefore $i_{{_\mathcal{T}}{M}}$ is an isomorphism and also ${(_\mathcal{T}{M})}_\mathcal{F} \cong 0$. The fact that  ${_\mathcal{T}}{(-)}$ is radical follows from applying a dual argument to the $(\mathcal{T}, \mathcal{F})$-canonical short exact sequence of $M_\mathcal{F}$.
It remains to check that $\mathcal{T} =\{M \in \mathcal{A}\mid {_\mathcal{T}}{M} \cong M \}$, but this follows from the fact that, for all $M \in \mathcal{T}$, $p_M =0$ by (T1). 

($\Leftarrow$) Let $\mathsf{t}: \mathcal{A} \to \mathcal{T}$ be a functor as in the statement and set $\mathcal{F} = \{ M \in \mathcal{A} \mid \mathsf{t}M \cong 0 \} $. Then as $\mathsf{t}$ is radical, $M / \mathsf{t}M \in \mathcal{F}$, for all $M \in \mathcal{A}$. Thus (T2) is satsified. To verify (T1), let $M \in \mathcal{T}, N \in \mathcal{F}$ and $f:M \to N$ be a morphism in $\mathcal{A}$, then there is a commutative diagram
\[ \begin{tikzcd} \mathsf{t}{M} \arrow[r, "\cong", tail] \arrow[d, shift left, "\mathsf{t}f"] & M \arrow[d, "f"] \\ \mathsf{t}{N} \cong 0 \arrow[r, tail ] &  N \end{tikzcd} \] from which we conclude $f=0$ and (T1) is satisfied.  

The fact that such a $\mathsf{t}$ is a right adjoint follows from the fact that every morphism $T \to M $ with $T \in \mathcal{T}$ and $M \in \mathcal{A}$ factors through $\mathsf{t}M$ by the universal property of the kernel.\end{proof}

As a direct consequence, we justify some of our terminology. 

\begin{cor} Let $\mathcal{A}$ be a pre-abelian category and $(\mathcal{T}, \mathcal{F})$ be a torsion pair on $\mathcal{A}$. Then for all $M \in \mathcal{A}$ the $(\mathcal{T}, \mathcal{F})$-canonical short exact sequence is unique up to isomorphism.
\end{cor}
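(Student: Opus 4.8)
The plan is to argue directly from the universal properties of kernels and cokernels --- both of which exist in any pre-abelian category --- together with the orthogonality axiom (T1). (Alternatively one could invoke Proposition~\ref{adfunk}: the assignment $M \mapsto {}_{\mathcal{T}}M$ underlies a right adjoint to the inclusion $\mathcal{T} \hookrightarrow \mathcal{A}$, and right adjoints, hence their counits, are unique up to canonical natural isomorphism; but I will spell out the self-contained route.) So suppose
\[ 0 \to T \xrightarrow{i} M \xrightarrow{p} F \to 0 \qquad \text{and} \qquad 0 \to T' \xrightarrow{i'} M \xrightarrow{p'} F' \to 0 \]
are both $(\mathcal{T},\mathcal{F})$-canonical short exact sequences of $M$, so that $T,T' \in \mathcal{T}$ and $F,F' \in \mathcal{F}$.

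First I would identify the torsion parts. The composite $p' i \colon T \to F'$ is zero by (T1), and since $i'$ is a kernel of $p'$, the universal property produces a unique $\alpha \colon T \to T'$ with $i'\alpha = i$. By the symmetric argument (using $p\, i' = 0$, again by (T1), and that $i$ is a kernel of $p$) there is a unique $\beta \colon T' \to T$ with $i\beta = i'$. Then $i'\alpha\beta = i\beta = i'$, and $i'$ is monic, so $\alpha\beta = \mathrm{id}_{T'}$; dually $\beta\alpha = \mathrm{id}_{T}$, so $\alpha$ is an isomorphism fitting into a commutative square with $\mathrm{id}_M$. Next, dually, since $p' i = 0$ and $p$ is a cokernel of $i$, there is a unique $\gamma \colon F \to F'$ with $\gamma p = p'$; likewise a unique $\delta \colon F' \to F$ with $\delta p' = p$, and $\delta\gamma p = \delta p' = p$ forces $\delta\gamma = \mathrm{id}_F$ since $p$ is epic, dually $\gamma\delta = \mathrm{id}_{F'}$. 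Hence $\gamma$ is an isomorphism, and the triple $(\alpha, \mathrm{id}_M, \gamma)$ is the required isomorphism of short exact sequences.

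There is essentially no obstacle here: everything used is built into the definition of a pre-abelian category, and (T1) delivers precisely the vanishings $p'i = 0$ and $p\,i'=0$ that make the factorisations possible. The only point requiring a (trivial) check is that the induced isomorphisms $\alpha$ and $\gamma$ are compatible with the structure maps, which is immediate from $i'\alpha = i$ and $\gamma p = p'$. Consequently the functor ${}_{\mathcal{T}}(-)$ of Proposition~\ref{adfunk}, and the torsionfree quotient $(-)_{\mathcal{F}}$, are well defined up to canonical isomorphism, as expected.
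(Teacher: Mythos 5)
Your proof is correct and matches the paper's route: the corollary is stated there as a direct consequence of Proposition \ref{adfunk}, whose proof rests on exactly the combination you use, namely (T1) forcing the composites $p'i$ and $p\,i'$ to vanish and the universal properties of kernels and cokernels producing the comparison maps. Spelling out the diagram chase self-containedly, as you do, simply unpacks that proposition and is entirely sound.
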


\begin{prop} \label{additivetors} Let $\mathcal{A}$ be a pre-abelian category and $(\mathcal{T}, \mathcal{F})$ be a torsion pair in $\mathcal{A}$. Then 
\be \item For all $M \in \mathcal{A}$, if $\Hom_\mathcal{A}(M, \mathcal{F})=0$ then $M \in \mathcal{T}$.
\item For all $N \in \mathcal{A}$, if $\Hom_\mathcal{A}(\mathcal{T}, N)=0$ then $N \in \mathcal{F}$.
\ee
\end{prop}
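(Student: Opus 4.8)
The plan is to prove part (a) by a short manipulation of the $(\mathcal{T},\mathcal{F})$-canonical short exact sequence and then to obtain part (b) by the dual argument.

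For (a), suppose $M \in \mathcal{A}$ satisfies $\Hom_\mathcal{A}(M,\mathcal{F}) = 0$. By (T2) there is a short exact sequence $0 \to {}_\mathcal{T}M \xrightarrow{i_M} M \xrightarrow{p_M} M_\mathcal{F} \to 0$ with ${}_\mathcal{T}M \in \mathcal{T}$ and $M_\mathcal{F} \in \mathcal{F}$. The first step is to observe that, since $M_\mathcal{F} \in \mathcal{F}$, the hypothesis forces $p_M = 0$; as $p_M$ is a cokernel it is an epimorphism, so $M_\mathcal{F} \cong 0$ (a zero epimorphism has a zero object as target). Consequently $i_M$, being a kernel of the morphism $p_M$ whose target is a zero object, is an isomorphism, so $M \cong {}_\mathcal{T}M$. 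Since $\mathsf{t}M = {}_\mathcal{T}M$ in the notation of Proposition~\ref{adfunk}, this means $\mathsf{t}M \cong M$, hence $M \in \mathcal{T} = \{X \in \mathcal{A} \mid \mathsf{t}X \cong X\}$.

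For (b), suppose $N \in \mathcal{A}$ satisfies $\Hom_\mathcal{A}(\mathcal{T}, N) = 0$. In the canonical short exact sequence $0 \to {}_\mathcal{T}N \xrightarrow{i_N} N \xrightarrow{p_N} N_\mathcal{F} \to 0$ the morphism $i_N$ has source ${}_\mathcal{T}N \in \mathcal{T}$, so the hypothesis gives $i_N = 0$; as $i_N$ is a kernel it is a monomorphism, so ${}_\mathcal{T}N \cong 0$ (a zero monomorphism has a zero object as source). Then $p_N$ is an isomorphism and $N \cong N_\mathcal{F} \in \mathcal{F}$, hence $N \in \mathcal{F}$.

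No real obstacle arises; the argument is formal and self-dual. The only points worth a word of justification are the two categorical trivialities invoked: a zero epimorphism $e \colon M \to C$ forces $C \cong 0$, because $\mathrm{id}_C$ and $0$ agree after precomposition with $e$, and dually a zero monomorphism forces its source to vanish; and the small bookkeeping that $\mathcal{T}$ (resp. $\mathcal{F}$) contains $M$ (resp. $N$) as soon as it contains an isomorphic copy — for $\mathcal{T}$ this is immediate from the description in Proposition~\ref{adfunk}, and for $\mathcal{F}$ it is the usual convention that the full subcategories considered here are closed under isomorphism.
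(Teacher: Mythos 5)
Your argument is correct and is essentially the paper's own proof: the paper likewise notes that the hypothesis kills $p_M$ in the canonical sequence, so $M \cong {}_\mathcal{T}M \in \mathcal{T}$, and handles (b) by duality. You merely spell out the two categorical trivialities (a zero epimorphism has vanishing target, and the kernel of a map to a zero object is an isomorphism) that the paper leaves implicit.
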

\begin{proof}
Let $M \in A$ be such that $\Hom_\mathcal{A}(M, \mathcal{F})=0$, then $p_M =0$ and $M \cong \leftidx{_\mathcal{T}}{M} \in \mathcal{T}$. The second statement follows by a dual argument.
\end{proof}

We show that the converse of the above statement is true for quasi-abelian categories in Proposition \ref{qabaddtors}.

 \begin{notation}
For a subcategory of $\mathcal{X}$ of an additive category $\mathcal{C}$ we write $\mathcal{X}^{\perp_\mathcal{C}} := \{ Y \in \mathcal{C} \mid \mathsf{Hom}_\mathcal{C}(\mathcal{X}, Y) = 0 \}$. We define ${}^{\perp_\mathcal{C}}{\mathcal{X}}$ dually. When there is no chance of confusion, we drop the superscript $\mathcal{C}$. 
\end{notation}

\begin{prop}
Let $\mathcal{A}$ be a pre-abelian category and  $\mathcal{T}$ be a torsion class in $\mathcal{A}$, then $\mathcal{T}$ is closed under extensions, quotients and coproducts. Dually, every torsionfree class in $\mathcal{A}$ is closed under extensions, subobjects and products.
\end{prop}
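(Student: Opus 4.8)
The plan is to prove the statement for torsion classes and then obtain the statement for torsionfree classes by the evident dualisation (both pre-abelian and the axioms (T1)--(T2) are self-dual). Throughout I will use Proposition \ref{adfunk}, which gives an idempotent radical kernel subfunctor $\mathsf{t}: \mathcal{A}\to\mathcal{T}$ right adjoint to the inclusion, with $\mathcal{T}=\{M \mid \mathsf{t}M\cong M\}$, together with the characterisations from Proposition \ref{additivetors}: $M\in\mathcal{T}$ as soon as $\Hom_\mathcal{A}(M,\mathcal{F})=0$, and dually $N\in\mathcal{F}$ as soon as $\Hom_\mathcal{A}(\mathcal{T},N)=0$. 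The cleanest route is to verify each closure property by checking that the relevant object kills all maps to $\mathcal{F}$ (for $\mathcal{T}$-statements) rather than manipulating the functor $\mathsf{t}$ directly, since $\Hom$-vanishing is automatically inherited along quotients, sub-coproducts, and the outer terms of extensions.

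First I would treat quotients. Given an epimorphism (cokernel) $q: T\twoheadrightarrow X$ with $T\in\mathcal{T}$, and any $f: X\to F$ with $F\in\mathcal{F}$, the composite $fq: T\to F$ is zero by (T1); since $q$ is an epimorphism this forces $f=0$, so $\Hom_\mathcal{A}(X,\mathcal{F})=0$ and hence $X\in\mathcal{T}$ by Proposition \ref{additivetors}(a). For coproducts, given $\{T_i\}_{i\in I}$ in $\mathcal{T}$ (the category is additive so only binary coproducts are guaranteed, but the same argument works for any coproduct that exists) and $f:\coprod_i T_i\to F$, each restriction $f\iota_i: T_i\to F$ vanishes by (T1), so $f=0$ by the universal property of the coproduct, giving $\coprod_i T_i\in\mathcal{T}$. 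Dually, torsionfree classes are closed under subobjects and products.

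For closure under extensions I would argue as follows. Let $0\to T'\rightarrowtail E\rightarrow T''\to 0$ be a short exact sequence with $T', T''\in\mathcal{T}$, and let $f: E\to F$ with $F\in\mathcal{F}$. Since the inclusion of $T'$ followed by $f$ is a map $T'\to F$, it is zero by (T1); hence $f$ factors through the cokernel $E\twoheadrightarrow T''$, say $f = g\circ(E\twoheadrightarrow T'')$ for some $g: T''\to F$. But then $g=0$ by (T1), so $f=0$, whence $E\in\mathcal{T}$ by Proposition \ref{additivetors}(a). The dual argument, using that a map $F'\to E$ out of a torsionfree object restricted along $T'\rightarrowtail E$ must land in the kernel and that $\Hom(\mathcal{T},-)$ detects torsionfreeness, shows torsionfree classes are closed under extensions.

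The only genuine subtlety — the point I would flag as the main obstacle — is that in a merely pre-abelian category one must be careful about what ``extension'', ``quotient'' and ``subobject'' mean: ``quotient'' should be taken to mean the target of a cokernel morphism (equivalently an admissible epimorphism), ``subobject'' the source of a kernel morphism, and ``extension'' a genuine short exact sequence in the sense of (T2). With these conventions the factorisation steps above go through verbatim, because the defining maps of a short exact sequence are by definition a kernel and its cokernel, so the universal properties used to factor $f$ are available. No appeal to pullback/pushout stability (i.e. the quasi-abelian hypothesis) is needed for this proposition; it is purely formal given (T1), (T2) and Proposition \ref{additivetors}.
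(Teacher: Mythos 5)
Your argument is correct and takes essentially the same route as the paper: the paper likewise uses (T1) together with the universal properties attached to the short exact sequences, applying the epi-cancellation and cokernel-factorisation directly to the canonical map $p_M$ and concluding $M\cong{}_\mathcal{T}M$, which is exactly what your detour through Proposition \ref{additivetors} amounts to, and your coproduct argument is identical to the paper's. One minor remark: the paper defines ``closed under quotients'' (resp.\ ``subobjects'') for arbitrary epimorphisms (resp.\ monomorphisms), not only cokernels (resp.\ kernels), but since your quotient argument uses only epi-cancellation it covers this more general case anyway, so the convention caveat in your final paragraph is unnecessary (the kernel--cokernel universal properties are only needed for extensions, where they hold by the definition of a short exact sequence).
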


Recall that a subcategory $\mathcal{X}$ of a pre-abelian category $\mathcal{A}$ is 
\be
\item \textit{closed under extensions} if for all short exact sequences $0 \to M' \rightarrowtail M \twoheadrightarrow M'' \to 0$ in $\mathcal{A}$ such that $M', M'' \in \mathcal{X}$, then $M \in \mathcal{X}$.
\item \textit{closed under quotients} if for all epimorphisms $M \to N$ in $\mathcal{A}$ such that $M \in \mathcal{X}$, then $N \in \mathcal{X}$. Being \textit{closed under subobjects} is defined dually.
\item \textit{closed under coproducts} if for all families of objects $\{X_i \mid i \in I \}$ in $\mathcal{X}$ indexed by a set $I$ such that the coproduct $\coprod_{i\in I} X_i$ exists in $\mathcal{A}$, then $\coprod_{i\in I} X_i \in \mathcal{X}$. Being \textit{closed under products} is defined dually.
\ee
\begin{proof}
Let $\mathcal{A}$ be a pre-abelian category and $(\mathcal{T}, \mathcal{F})$ be a torsion pair in $\mathcal{A}$. We begin by showing that a torsion class, $\mathcal{T}$, is closed under quotients. Let $e: M \rightarrow N$ be an epimorphism in $\mathcal{A}$ with $M \in \mathcal{T}$. As $\mathsf{Hom}_{\mathcal{A}}(\mathcal{T}, \mathcal{F})=0$, the composition $p_Ne: M \to N_\mathcal{F}$ is zero. Thus, as $e$ is an epimorphism, $p_N$ is zero and $N \cong {}_\mathcal{T}N \in \mathcal{T}$.

We now show that $\mathcal{T}$ is closed under extensions. To this end, let $0 \to M' \rightarrowtail M \twoheadrightarrow M'' \to 0$ be a short exact sequence in $\mathcal{A}$ with $M', M'' \in \mathcal{T}$. Since $\mathsf{Hom}_{\mathcal{A}}(\mathcal{T}, \mathcal{F})=0$ and $M' \in \mathcal{T}$, by the universal property of the kernel and cokernel, there exists a commutative diagram 
\[ \begin{tikzcd} 
0 \arrow[r] & M' \arrow[d] \arrow[r, tail] & M \arrow[d, "1"] \arrow[r, two heads] &M'' \arrow[r] \arrow[d, "f"] & 0
\\ 0 \arrow[r] &\leftidx{_\mathcal{T}}{M} \arrow[r, "i_M", tail] & M \arrow[r, "p_M", two heads] & M_\mathcal{F} \arrow[r] & 0. 
\end{tikzcd} \] As $M'' \in \mathcal{T}$, the morphism $f$ is zero and hence so is $p_M$. Thus $M \cong {}_\mathcal{T}M \in \mathcal{T}$.

Finally, we show that $\mathcal{T}$ is closed under coproducts. Let $\{X_i \mid i \in I \}$ be a family of objects in $\mathcal{T}$ indexed by a set $I$ such that the coproduct $\coprod_{i\in I} X_i$ exists in $\mathcal{A}$ and let $\iota_i: X_i \to \coprod_{i\in I} X_i$ denote the canonical inclusions. Suppose there is a morphism $f:  \coprod_{i\in I} X_i \to F$ for some $F \in \mathcal{F}$. Then, as all $X_i \in \mathcal{T}$ the composition $f\iota_i =0$ for all $i \in I$. Thus $f$ is the unique morphism such that the diagram 
\[\begin{tikzcd} X_i \arrow[r, "\iota_i"] \arrow[dr, "0"'] &  \coprod_{i\in I} X_i \arrow[d, "f"] \\ & F \end{tikzcd} \] commutes for all $i \in I$. Since the zero morphism satisfies this property, we conclude that $f=0$.

The statements for torsionfree classes in $\mathcal{A}$ are proved by dual arguements.
\end{proof}

\begin{rem} \label{counterex1} In general, the converse to the above statement is not true: Let $Q$ be the quiver
\[ \begin{tikzcd} 1 \arrow[r] & 2 \arrow[r] & 3 \end{tikzcd} \] and consider the abelian category $\mathcal{A}=\mod KQ$ whose Auslander-Reiten quiver is given by 
\[\begin{tikzcd}[sep = small] 
& & \begin{smallmatrix} 1 \\ 2 \\ 3 \end{smallmatrix} \arrow[dr] & & \\ & \begin{smallmatrix}  2 \\ 3 \end{smallmatrix} \arrow{ur} \arrow[dr] & & \begin{smallmatrix} 1 \\ 2  \end{smallmatrix} \arrow[dr] & \\ \begin{smallmatrix} 3 \end{smallmatrix}  \arrow[ur] & & \begin{smallmatrix} 2 \end{smallmatrix}  \arrow[ur] & & \begin{smallmatrix} 1 \end{smallmatrix}
\end{tikzcd} \] and consider the subcategory $$ \mathcal{C} = \mathsf{add}\{ \begin{smallmatrix} 3 \end{smallmatrix}\oplus \begin{smallmatrix} 2 \end{smallmatrix} \oplus \begin{smallmatrix} 2 \\ 3 \end{smallmatrix} \oplus\begin{smallmatrix} 1 \\ 2 \end{smallmatrix} \oplus \begin{smallmatrix} 1 \\2 \\3 \end{smallmatrix} \} $$ which is quasi-abelian as it is a torsionfree class of $\mathcal{A}$. Now the subcategory $\mathcal{T}=\mathsf{add}\{ \begin{smallmatrix} 2 \end{smallmatrix} \oplus \begin{smallmatrix} 2 \\ 3 \end{smallmatrix} \} $ of $\mathcal{C}$ is closed under coproducts, extensions and quotients in $\mathcal{C}$ but it is not a torsion class in $\mathcal{C}$. Indeed, $\mathcal{T}^{\perp_\mathcal{C}} = \mathsf{add}\{\begin{smallmatrix} 3 \end{smallmatrix} \}$  but ${}^{\perp_{\mathcal{C}}}(\mathcal{T}^{\perp_\mathcal{C}}) = \mathsf{add}\{ \begin{smallmatrix} 2 \end{smallmatrix} \oplus \begin{smallmatrix} 2 \\ 3 \end{smallmatrix} \oplus\begin{smallmatrix} 1 \\ 2 \end{smallmatrix} \oplus\begin{smallmatrix} 1 \\2 \\3 \end{smallmatrix} \} \neq \mathcal{T}$ which contradicts Proposition \ref{additivetors} thus $\mathcal{T}$ is not a torsion class in $\mathcal{C}$.
\end{rem}

To finish this section, we recall some definitions and make a remark that we will use in the rest of the article. 

\begin{defn} Let $\mathcal{A}$ be a pre-abelian category, $\mathcal{X}\subseteq \mathcal{A}$ be a full subcategory and let $M \in \mathcal{A}$. A \emph{right $\mathcal{X}$-approximation of $M$} is a morphism $\alpha: X \rightarrow M$ with $X \in \mathcal{X}$ such that  all morphisms $X'\rightarrow M$ with $X' \in \mathcal{X}$  factor through $\alpha$:
\[ \begin{tikzcd} X' \arrow[d, dashed, "\exists"'] \arrow[dr, "\forall"] & \\ X \arrow[r, "\alpha"] & M \end{tikzcd} \] 
Dually, we define a  \emph{left $\mathcal{X}$-approximation of $M$}.
The subcategory $\mathcal{X}$ is called \emph{contravariantly finite} (resp. \emph{covariantly finite}) in $\mathcal{A}$ if every $M \in \mathcal{A}$ admits a right (resp. left) $\mathcal{X}$-approximation. $\mathcal{X}$ is called \emph{functorially finite} if it is both contravariantly and covariantly finite in $\mathcal{A}$.
\end{defn}

\begin{rem} \label{torscocov}
It follows immediately from the definitions that for any torsion pair $(\mathcal{T}, \mathcal{F})$ in a pre-abelian category $\mathcal{A}$, $\mathcal{T}$ is a contravariantly finite subcategory with right $\mathcal{T}$-approximations given by the functor ${}_\mathcal{T}(-)$. Dually, $\mathcal{F}$ is a covariantly finite subcategory of $\mathcal{A}$. 
\end{rem}

\section{The heart of twin torsion pairs}

We begin by recalling the structure of torsion(free) classes in abelian categories:
\begin{lem} \cite[Proposition B.3]{bondal2003generators}
\cite[\S 4, Corollary]{rump2001almost}  Every torsion class and torsionfree class of an abelian category has the structure of an quasi-abelian category.  
\end{lem}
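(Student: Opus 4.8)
The plan is to verify directly that a torsion class $\mathcal{T}$ in the abelian category $\mathcal{A}$ is quasi-abelian in the sense of Definition \ref{defnn}; the torsionfree case then follows by passing to the opposite category $\mathcal{A}^{\mathrm{op}}$, in which a torsionfree class of $\mathcal{A}$ becomes a torsion class and where quasi-abelianness, being a self-dual notion, is unchanged. Throughout I would use that $\mathcal{T}$ is closed under extensions, quotients and coproducts in $\mathcal{A}$ — so in particular an additive subcategory — and that the torsion radical ${}_{\mathcal{T}}(-) : \mathcal{A} \to \mathcal{T}$ is right adjoint to the inclusion $\mathcal{T} \hookrightarrow \mathcal{A}$, as in Proposition \ref{adfunk}.

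The first step is to describe (co)kernels in $\mathcal{T}$. For $f : M \to N$ in $\mathcal{T}$ the cokernel $\Coker_{\mathcal{A}} f$ is a quotient of $N$, hence lies in $\mathcal{T}$, and being full $\mathcal{T}$ inherits it as the cokernel of $f$; while $\Ker_{\mathcal{A}} f$ is a subobject of $M$ that need not lie in $\mathcal{T}$, and I claim ${}_{\mathcal{T}}(\Ker_{\mathcal{A}} f) \rightarrowtail \Ker_{\mathcal{A}} f \to M$ is the kernel of $f$ in $\mathcal{T}$: any $T \to M$ with $T \in \mathcal{T}$ killing $f$ factors through $\Ker_{\mathcal{A}} f$ and then, by the adjunction, uniquely through ${}_{\mathcal{T}}(\Ker_{\mathcal{A}} f)$. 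Thus $\mathcal{T}$ is pre-abelian, and from these formulae one reads off the identification that drives the rest of the proof: a morphism of $\mathcal{T}$ is a kernel in $\mathcal{T}$ exactly when it is a monomorphism in $\mathcal{A}$, and a cokernel in $\mathcal{T}$ exactly when it is an epimorphism in $\mathcal{A}$ whose $\mathcal{A}$-kernel lies in $\mathcal{T}$.

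With this in hand, both stability properties follow. For the pushout of a kernel: given a kernel $i : A \rightarrowtail B$ in $\mathcal{T}$ — a monomorphism in $\mathcal{A}$ — and an arbitrary $a : A \to C'$ in $\mathcal{T}$, form the pushout $P$ of $i$ along $a$ in $\mathcal{A}$; since $P$ is a quotient of $B \oplus C'$ it lies in $\mathcal{T}$, so the square is also a pushout in $\mathcal{T}$, and the induced map $C' \to P$ is a monomorphism in $\mathcal{A}$ (pushouts preserve monomorphisms in an abelian category), hence a kernel in $\mathcal{T}$. Dually, for the pullback of a cokernel: given a cokernel $p : B \twoheadrightarrow C$ in $\mathcal{T}$, so that $K := \Ker_{\mathcal{A}} p \in \mathcal{T}$, and an arbitrary $c : C' \to C$ in $\mathcal{T}$, the pullback $P$ of $p$ along $c$ in $\mathcal{A}$ fits in a short exact sequence $0 \to K \to P \to C' \to 0$ in $\mathcal{A}$ with $K, C' \in \mathcal{T}$, whence $P \in \mathcal{T}$ by closure under extensions; so $P$ is the pullback in $\mathcal{T}$, and $P \to C'$ — an epimorphism of $\mathcal{A}$ with $\mathcal{A}$-kernel isomorphic to $K \in \mathcal{T}$ — is a cokernel in $\mathcal{T}$.

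The one place where care is genuinely needed, and which I expect to be the main obstacle, is that limits and colimits in $\mathcal{T}$ are in general \emph{not} computed as in $\mathcal{A}$: a pullback in $\mathcal{A}$ is only a subobject of a biproduct, and $\mathcal{T}$ need not be closed under subobjects (and dually for pushouts). The crux is therefore the observation used above that in precisely the two relevant situations — the pushout of a kernel and the pullback of a cokernel — the ambient (co)limit already lands in $\mathcal{T}$, by closure under quotients and biproducts in the one case and under extensions in the other. Once that is secured, the stability assertions reduce to standard facts about the abelian category $\mathcal{A}$.
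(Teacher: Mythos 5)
Your proof is correct: the explicit (co)kernel formulas, the characterisations of kernels in $\mathcal{T}$ as $\mathcal{A}$-monomorphisms and of cokernels in $\mathcal{T}$ as $\mathcal{A}$-epimorphisms with $\mathcal{A}$-kernel in $\mathcal{T}$, and the two stability checks all go through using closure of $\mathcal{T}$ under quotients, extensions and finite coproducts, and the torsionfree case does follow by passing to the opposite category. The route is, however, different from the paper's. The paper does not prove this lemma directly (it is cited from Bondal--Van den Bergh and Rump); instead it proves the generalisation Theorem \ref{qab} for the heart $\mathcal{C}'\cap\mathcal{D}$ of twin torsion pairs, of which the lemma is the special case $[(\mathcal{C},\mathcal{D}),(\mathcal{A},0)]$ resp.\ $[(0,\mathcal{A}),(\mathcal{C}',\mathcal{D}')]$. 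Its argument shares your first step (Lemma \ref{preab}: pre-abelian, with kernels given by applying the torsion radical to the ambient kernel), but then, rather than checking Definition \ref{defnn} by hand, it shows via a snake-lemma computation (Proposition \ref{exact c'nd}) that short exact sequences in the heart coincide with short exact sequences of $\mathcal{A}$, and concludes quasi-abelianness from the characterisation that a pre-abelian category is quasi-abelian precisely when all of its short exact sequences form a Quillen exact structure. Your direct verification is more elementary and entirely self-contained for a single torsion(free) class, but note that its crux, the identification ``kernel in $\mathcal{T}$ $\Leftrightarrow$ monomorphism in $\mathcal{A}$'', relies on closure under quotients and fails in a general heart $\mathcal{C}'\cap\mathcal{D}$ (there the cokernel of a monomorphism must itself be pushed into $\mathcal{D}$, so not every $\mathcal{A}$-mono between heart objects is a kernel); the paper's exact-structure argument is what makes the statement robust enough to yield Theorem \ref{qab}, which the later sections need.
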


In this section we generalise the above result. Namely, we consider the intersection $\mathcal{C}' \cap \mathcal{D}$ where $(\mathcal{C}, \mathcal{D})$, $(\mathcal{C}', \mathcal{D}')$ are torsion pairs in $\mathcal{A}$ such that $\mathcal{C} \subseteq \mathcal{C}'$ or, equivalently, $\mathcal{D}' \subseteq  \mathcal{D}$. We shall refer to such couples of torsion pairs as \textit{twin torsion pairs} and the intersection $\mathcal{C}' \cap \mathcal{D}$ as their \textit{heart}.  We denote twin torsion pairs by $[(\mathcal{C}, \mathcal{D})$, $(\mathcal{C}', \mathcal{D}')]$. We will show that such hearts are quasi-abelian. 

\begin{thm} \label{qab}
 Let $\mathcal{A}$ be an abelian category and let $[(\mathcal{C}, \mathcal{D})$, $(\mathcal{C}', \mathcal{D}')]$ be twin torsion pairs on $\mathcal{A}$. Then the heart, $\mathcal{C}' \cap \mathcal{D}$, is quasi-abelian. 
\end{thm}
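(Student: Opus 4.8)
The plan is to show that $\mathcal{H} := \mathcal{C}' \cap \mathcal{D}$ is pre-abelian, and then verify the two stability conditions of Definition \ref{defnn}(b). Since $\mathcal{H}$ is closed under extensions in $\mathcal{A}$ (being the intersection of a torsion class and a torsionfree class, each closed under extensions), it is an additive subcategory of $\mathcal{A}$; so the work is to produce kernels and cokernels inside $\mathcal{H}$. The key observation is that the torsion radical $\mathsf{t}_{\mathcal{C}'}(-) = {}_{\mathcal{C}'}(-)$ associated to $(\mathcal{C}', \mathcal{D}')$ and the torsionfree coradical $(-)_{\mathcal{D}} = (-)/{}_{\mathcal{C}}(-)$ associated to $(\mathcal{C}, \mathcal{D})$ both restrict to well-behaved operations on objects of $\mathcal{A}$ that live near $\mathcal{H}$: for $M \in \mathcal{H}$ one has ${}_{\mathcal{C}}M = 0$ and $M_{\mathcal{D}'} = 0$, so these radicals measure precisely the failure of sub/quotient objects to lie back in $\mathcal{H}$.

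First I would construct the cokernel. Given $f : X \to Y$ in $\mathcal{H}$, let $c : Y \twoheadrightarrow C$ be its cokernel in $\mathcal{A}$. Since $Y \in \mathcal{D}$ and $\mathcal{D}$ is closed under quotients, $C \in \mathcal{D}$; but $C$ need not lie in $\mathcal{C}'$. I claim $\coker_{\mathcal{H}}(f) = {}_{\mathcal{C}'}C$, the torsion part of $C$ for $(\mathcal{C}', \mathcal{D}')$: it lies in $\mathcal{C}'$ by definition and in $\mathcal{D}$ because it is a subobject of $C \in \mathcal{D}$ (and $\mathcal{D}$ is closed under subobjects), hence in $\mathcal{H}$; and its universal property follows because any morphism $Y \to H'$ with $H' \in \mathcal{H}$ that kills $f$ factors through $C$, and then factors through ${}_{\mathcal{C}'}C$ since $H' \in \mathcal{C}'$ forces the composite $C \to C_{\mathcal{D}'}$... more precisely, $\Hom(C, H') = \Hom({}_{\mathcal{C}'}C, H')$ by the $(\mathcal{C}', \mathcal{D}')$-canonical sequence of $C$ together with $\Hom(C_{\mathcal{D}'}, H') \subseteq \Hom(\mathcal{D}', \mathcal{C}') = 0$ (here I use that $\mathcal{C}' \subseteq \mathcal{D}'^{\perp}$, wait — rather $\mathcal{D}' = \mathcal{C}'^{\perp}$ gives $\Hom(\mathcal{C}', \mathcal{D}')=0$, and I need $\Hom(\mathcal{D}', \mathcal{C}')$; instead use that $\mathsf{Ext}^1$-vanishing in the canonical sequence splits off the relevant $\Hom$). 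Dually, for the kernel: let $k : K \rightarrowtail X$ be the kernel of $f$ in $\mathcal{A}$; then $K \in \mathcal{C}'$ since $\mathcal{C}'$ is closed under subobjects, and I set $\ker_{\mathcal{H}}(f) = K_{\mathcal{D}} = K/{}_{\mathcal{C}}K$, which lies in $\mathcal{D}$ by definition and in $\mathcal{C}'$ as a quotient of $K \in \mathcal{C}'$ (using $\mathcal{C}'$ closed under quotients), hence in $\mathcal{H}$; its universal property is dual to the cokernel case. This shows $\mathcal{H}$ is pre-abelian.

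For the quasi-abelian axioms, I would show cokernels are stable under pullback and kernels under pushout in $\mathcal{H}$. The cleanest route: a pre-abelian category is quasi-abelian iff for every pullback square the pullback of a cokernel is a cokernel (and dually). Given a cokernel $p : Y \twoheadrightarrow Z$ in $\mathcal{H}$ and a morphism $Z' \to Z$ with $Z' \in \mathcal{H}$, form the pullback $P$ in $\mathcal{A}$; since $\mathcal{A}$ is abelian the pulled-back map $P \to Z'$ is an epimorphism in $\mathcal{A}$, and I must identify the $\mathcal{H}$-cokernel of $P \to Y'$... the subtlety is that $P$ itself may not lie in $\mathcal{H}$, so I would instead replace $P$ by an appropriate ${}_{\mathcal{C}'}(-)$- or $(-)_{\mathcal{D}}$-modification and chase the resulting diagram, using that the $\mathcal{H}$-cokernel is computed by applying ${}_{\mathcal{C}'}(-)$ to the $\mathcal{A}$-cokernel as established above, and that ${}_{\mathcal{C}'}(-)$ is left exact (kernel subfunctor of the identity) so it interacts predictably with the pullback. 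The main obstacle I anticipate is exactly this bookkeeping: the pullback/pushout in $\mathcal{H}$ is not the one computed in $\mathcal{A}$, so I cannot directly import the abelian pullback-stability of epimorphisms; I need to carefully track how ${}_{\mathcal{C}'}(-)$ and $(-)_{\mathcal{D}}$ commute (or fail to commute) with the relevant limits, and show the discrepancy vanishes on objects of $\mathcal{H}$. An alternative, possibly smoother, strategy would be to invoke Rump's characterisation that a pre-abelian category is quasi-abelian iff for every morphism $f$ the induced map $\coim f \to \im f$ is simultaneously monic and epic, and verify this directly in $\mathcal{H}$ by comparing with the corresponding map in $\mathcal{A}$ — which is an isomorphism there — and the two radical functors; I expect that comparison to be the technical heart of the argument.
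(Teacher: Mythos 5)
There is a genuine gap, and it starts at the pre-abelian step: you have the closure properties of torsion and torsionfree classes backwards, and as a result your kernel and cokernel constructions are the wrong ones. Writing $\mathcal{H}=\mathcal{C}'\cap\mathcal{D}$ and $f\colon X\to Y$ in $\mathcal{H}$: the kernel $K=\Ker f$ taken in $\mathcal{A}$ is a subobject of $X\in\mathcal{D}$, hence lies in $\mathcal{D}$ (torsionfree classes are closed under subobjects), \emph{not} in $\mathcal{C}'$ as you claim --- torsion classes are closed under quotients and extensions, not subobjects. In fact, since $K\in\mathcal{D}$ one has ${}_{\mathcal{C}}K=0$, so your proposed kernel $K_{\mathcal{D}}=K/{}_{\mathcal{C}}K$ is just $K$ itself, which in general fails to lie in $\mathcal{C}'$ and hence in $\mathcal{H}$. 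Dually, $C=\Coker f$ is a quotient of $Y\in\mathcal{C}'$, so $C\in\mathcal{C}'$ (it need not be in $\mathcal{D}$, since $\mathcal{D}$ is not closed under quotients), whence ${}_{\mathcal{C}'}C=C$ and your proposed cokernel collapses to the plain abelian cokernel, which can lie outside $\mathcal{H}$. The correct modifications are the opposite ones: $\ker_{\mathcal{H}}f={}_{\mathcal{C}'}(\Ker f)$ (a subobject of $\Ker f$, hence in $\mathcal{D}$, and in $\mathcal{C}'$ by construction) and $\coker_{\mathcal{H}}f=(\Coker f)_{\mathcal{D}}$; their universal properties follow from the approximation property of ${}_{\mathcal{C}'}(-)$ resp.\ from $\Hom(\mathcal{C},\mathcal{D})=0$. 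This is precisely where your argument stalls: the vanishing $\Hom(\mathcal{D}',\mathcal{C}')=0$ you found yourself needing is simply not available, and there is no $\ext$-vanishing attached to a torsion pair that can substitute for it.

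The second half is a sketch rather than a proof: you correctly anticipate that pullbacks and pushouts in $\mathcal{H}$ are not those of $\mathcal{A}$, but you never resolve that bookkeeping, and --- tellingly --- you never use the twin hypothesis $\mathcal{C}\subseteq\mathcal{C}'$, which is exactly what makes the theorem true (your pre-abelian step, like the paper's, does not need it; the quasi-abelian step does). The paper's route is to prove that a pair of composable morphisms in $\mathcal{H}$ is a short exact sequence in $\mathcal{H}$ if and only if it is one in $\mathcal{A}$: given such a sequence $(f,g)$ in $\mathcal{H}$, the Snake Lemma yields ${}_{\mathcal{C}}(\Coker f)\cong(\Ker g)_{\mathcal{D}'}$, an object of $\mathcal{C}\cap\mathcal{D}'$, which is $0$ because $\mathcal{C}\subseteq\mathcal{C}'$. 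Hence the class of all short exact sequences of $\mathcal{H}$ coincides with the exact structure inherited from the extension-closed subcategory $\mathcal{H}\subseteq\mathcal{A}$, and quasi-abelianness follows from the characterisation that a pre-abelian category is quasi-abelian precisely when the class of all its short exact sequences forms a Quillen exact structure. Your alternative suggestion via the comparison $\coim f\to\im f$ could in principle be carried out, but as written it leaves this essential verification, and the role of twinness, open.
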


\begin{rem} \label{twoofhearts}
We remark that, in general, distinct twin torsion pairs can have the same heart. Indeed, consider the quiver $A_3$ as in Remark \ref{counterex1} and the twin torsion pairs
\[ \Big( \mathsf{add}\{\begin{smallmatrix} 1 \end{smallmatrix} \}, \ \mathsf{add}\{ \begin{smallmatrix} 2 \end{smallmatrix}\oplus \begin{smallmatrix} 3 \end{smallmatrix}\oplus \begin{smallmatrix} 1 \\ 2 \end{smallmatrix}\oplus \begin{smallmatrix} 2 \\ 3  \end{smallmatrix}\oplus \begin{smallmatrix} 1 \\ 2 \\ 3  \end{smallmatrix} \} \Big),\quad \Big( 0 , \  \mathsf{add}\{\begin{smallmatrix} 1 \end{smallmatrix}\oplus \begin{smallmatrix} 2 \end{smallmatrix}\oplus \begin{smallmatrix} 3 \end{smallmatrix}\oplus \begin{smallmatrix} 1 \\ 2 \end{smallmatrix}\oplus \begin{smallmatrix} 2 \\ 3  \end{smallmatrix}\oplus \begin{smallmatrix} 1 \\ 2 \\ 3  \end{smallmatrix} \} \Big) \] and 
\[ \Big( \mathsf{add}\{\begin{smallmatrix} 3 \end{smallmatrix} \}, \ \mathsf{add}\{\begin{smallmatrix} 1 \end{smallmatrix} \oplus \begin{smallmatrix} 2 \end{smallmatrix}\oplus \begin{smallmatrix} 1 \\ 2 \end{smallmatrix} \} \Big),\quad \Big( \mathsf{add}\{\begin{smallmatrix} 1 \end{smallmatrix} \oplus \begin{smallmatrix} 3 \end{smallmatrix} \},\  \mathsf{add}\{\begin{smallmatrix} 1 \end{smallmatrix} \oplus \begin{smallmatrix} 2 \end{smallmatrix}\oplus \begin{smallmatrix} 1 \\ 2 \end{smallmatrix} \} \Big) \] which both have heart $\mathsf{add} \{ \begin{smallmatrix} 1 \end{smallmatrix} \}$. 
\end{rem}

The first step of the proof of Proposition \ref{qab} follows the argument in \cite[Theorem 2]{rump2001almost} and does not require the assumption that the torsion pairs are twin.

\begin{lem} \label{preab} Let $\mathcal{A}$ be an abelian category and let $(\mathcal{C}, \mathcal{D})$, $(\mathcal{C}', \mathcal{D}')$ be torsion pairs in $\mathcal{A}$. Then $\mathcal{C}' \cap \mathcal{D}$ is pre-abelian.
\end{lem}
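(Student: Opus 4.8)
I want to show that $\mathcal{E} := \mathcal{C}' \cap \mathcal{D}$ is pre-abelian, i.e.\ that every morphism $f \colon X \to Y$ in $\mathcal{E}$ admits a kernel and a cokernel computed \emph{inside} $\mathcal{E}$. The idea is to build these out of the ambient abelian kernel and cokernel in $\mathcal{A}$, correcting by the torsion and torsionfree radical functors associated to the two torsion pairs. Concretely, let $K = \ker_\mathcal{A} f$ and $Q = \coker_\mathcal{A} f$ in $\mathcal{A}$. The subcategory $\mathcal{D}$ is a torsionfree class, hence closed under subobjects; so $K \hookrightarrow X$ with $X \in \mathcal{D}$ gives $K \in \mathcal{D}$, but $K$ need not lie in $\mathcal{C}'$. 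I fix this by applying the torsion radical $\mathsf{t}_{\mathcal{C}'} := {}_{\mathcal{C}'}(-)$: set $\widehat{K} := \mathsf{t}_{\mathcal{C}'} K$. Since $\mathcal{C}'$ is closed under subobjects of... no --- rather, $\widehat{K} \le K \le X$ and $\widehat{K} \in \mathcal{C}'$, while $\widehat{K} \in \mathcal{D}$ because $\mathcal{D}$ is closed under subobjects and $\widehat{K} \le K \in \mathcal{D}$. So $\widehat{K} \in \mathcal{E}$. Dually, $Q \in \mathcal{C}'$ (it is a quotient of $Y \in \mathcal{C}'$, and $\mathcal{C}'$ is closed under quotients), and I set $\widehat{Q} := Q / \mathsf{t}_{\mathcal{D}'} Q = Q_{\mathcal{D}'}$, which lies in $\mathcal{D}'\subseteq \mathcal{D}$ and, being a quotient of $Q \in \mathcal{C}'$, also in $\mathcal{C}'$; hence $\widehat{Q} \in \mathcal{E}$.

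\textbf{Verifying the universal properties.} Next I check that $\widehat{K} \hookrightarrow X \xrightarrow{f} Y$ is the kernel of $f$ in $\mathcal{E}$ and that $Y \xrightarrow{f} \cdots \twoheadrightarrow \widehat{Q}$ (the composite of the abelian cokernel $Y \twoheadrightarrow Q$ with $Q \twoheadrightarrow \widehat{Q}$) is the cokernel of $f$ in $\mathcal{E}$. For the kernel: given $T \in \mathcal{E}$ with $g \colon T \to X$ and $fg = 0$, the abelian universal property factors $g$ uniquely through $K \hookrightarrow X$; since $T \in \mathcal{C}'$ and $\mathcal{C}'$ is a torsion class, Proposition~\ref{adfunk} (every map from an object of $\mathcal{C}'$ to an object of $\mathcal{A}$ factors through its torsion radical) gives that $T \to K$ factors through $\widehat{K} = \mathsf{t}_{\mathcal{C}'}K$, and this factorization is unique because $\widehat{K} \hookrightarrow X$ is still monic. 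For the cokernel, I run the dual argument using that $\mathcal{D}'$ is a torsionfree class and that maps out of $Q$ into objects of $\mathcal{D}'$ factor uniquely through $Q_{\mathcal{D}'}$; here I need the target $S \in \mathcal{E} \subseteq \mathcal{D} $, but the relevant factorization uses membership in $\mathcal{D}'$ --- I should be careful here, since $S \in \mathcal{D}$ is not automatically in $\mathcal{D}'$. The fix: actually $\coker_\mathcal{E} f$ should be taken as the largest quotient of $Q$ lying in $\mathcal{E}$, and one checks directly that for $S\in \mathcal D$ any map $Q \to S$ killing nothing extra already has the required factorization because $\mathsf t_{\mathcal D'}Q$ maps to $0$ only when... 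This is the point I expect to need the most care, and it is where the \emph{twin} hypothesis or a more delicate argument may enter; I would isolate it as a separate claim and prove it by chasing the relevant canonical sequences.

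\textbf{Additivity and the main obstacle.} Before all this I should note $\mathcal{E}$ is an additive subcategory of $\mathcal{A}$: it is closed under finite biproducts since $\mathcal{C}'$ and $\mathcal{D}$ each are (torsion and torsionfree classes are closed under extensions, in particular under biproducts), and it is full and contains $0$. The genuinely delicate step, as flagged, is matching the universal property of $\widehat{Q}$ as a cokernel in $\mathcal{E}$ against test objects that lie in $\mathcal{D}$ but possibly not in $\mathcal{D}'$; dually for $\widehat K$ against test objects in $\mathcal C'$ but not $\mathcal C$. I would handle this by showing that for $X\in\mathcal D$ the inclusion $\mathsf t_{\mathcal C'}X\hookrightarrow X$ induces a bijection $\Hom_\mathcal A(T,\mathsf t_{\mathcal C'}X)\to\Hom_\mathcal A(T,X)$ for all $T\in\mathcal C'$ --- which is exactly the adjunction of Proposition~\ref{adfunk} --- and then observe that the subobject $\widehat K\le X$ produced above does not depend on which torsion pair's radical we used once we restrict attention to maps from $\mathcal E$, because any such map already lands in $K\cap(\text{torsion part})$. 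Since $\mathcal A$ is abelian these kernel/cokernel constructions are automatically well-behaved with respect to composition, so no separate functoriality check is needed. This establishes that $\mathcal C'\cap\mathcal D$ is pre-abelian.
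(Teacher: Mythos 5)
Your kernel construction is exactly the paper's argument and is correct: $\widehat{K}={}_{\mathcal{C}'}(\Ker_{\mathcal{A}}f)$ lies in $\mathcal{D}$ because $\mathcal{D}$ is closed under subobjects, and the universal property in $\mathcal{C}'\cap\mathcal{D}$ follows from the universal property of $\Ker_{\mathcal{A}}f$ together with the approximation/adjunction property of ${}_{\mathcal{C}'}(-)$, with uniqueness because $\widehat{K}\rightarrowtail X$ is monic. The genuine gap is in the cokernel half, where you dualise with the wrong torsion pair. You set $\widehat{Q}=Q_{\mathcal{D}'}$ for $Q=\Coker_{\mathcal{A}}f$, and as you yourself observe, the universal property then does not come out: a test object $S\in\mathcal{C}'\cap\mathcal{D}$ lies in $\mathcal{D}$ but in general not in $\mathcal{D}'$, and $\Hom_{\mathcal{A}}({}_{\mathcal{C}'}Q,S)$ need not vanish, so a map $Q\to S$ need not factor through $Q/{}_{\mathcal{C}'}Q$. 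You leave this unresolved, suggesting the twin hypothesis or a canonical-sequence chase "may enter"; note also that the lemma assumes nothing about $\mathcal{C}\subseteq\mathcal{C}'$, so your justification $\mathcal{D}'\subseteq\mathcal{D}$ for $\widehat{Q}\in\mathcal{D}$ is not available either.

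The fix is simple and requires no twin hypothesis: the cokernel must be built from the \emph{first} pair, namely $\widehat{Q}:=Q_{\mathcal{D}}=Q/{}_{\mathcal{C}}Q$. This lies in $\mathcal{D}$ by construction and in $\mathcal{C}'$ because it is a quotient of $Q$, itself a quotient of $Y\in\mathcal{C}'$, and $\mathcal{C}'$ is closed under quotients. The universal property is then immediate: given $v\colon Y\to S$ with $S\in\mathcal{C}'\cap\mathcal{D}$ and $vf=0$, the map factors uniquely through $Q$, and since $\Hom_{\mathcal{A}}(\mathcal{C},\mathcal{D})=0$ the composite ${}_{\mathcal{C}}Q\to Q\to S$ is zero, so the map factors through $Q_{\mathcal{D}}$, uniquely because $Y\twoheadrightarrow Q_{\mathcal{D}}$ is epic. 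In other words, kernels in $\mathcal{C}'\cap\mathcal{D}$ are obtained by reflecting $\Ker_{\mathcal{A}}f$ into $\mathcal{C}'$ via $(\mathcal{C}',\mathcal{D}')$, while cokernels are obtained by passing from $\Coker_{\mathcal{A}}f$ to its $\mathcal{D}$-torsionfree quotient via $(\mathcal{C},\mathcal{D})$; the two halves use \emph{different} torsion pairs, which is precisely why your attempted duality with $(\mathcal{C}',\mathcal{D}')$ on both sides broke down.
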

\begin{proof}
We check the existence of kernels in $\mathcal{C}' \cap \mathcal{D}$, whence existence of cokernels will follow by duality. Let $f: X \to Y$ be a morphism in $\mathcal{C}' \cap \mathcal{D}$, let $g: \Ker f \to X $ be a kernel of $f$ in $\mathcal{A}$ and let $h: {}_{\mathcal{C}'}(\Ker f) \to \Ker f$ be the right $\mathcal{C}'$-approximation of $\Ker f$. Set $K:= {}_{\mathcal{C}'}(\Ker f)$. We claim that $hg: K \to X$ is a kernel of $f$ in $\mathcal{C}' \cap \mathcal{D}$. Firstly, note that $K \in \mathcal{D}$. Indeed, $\mathcal{D}$ is closed under subobjects, and $K$ is a subobject of $\Ker f$ which in turn is a subobject of $X$. 

Now let $u: Z \to X $ be a morphism in $\mathcal{C}' \cap \mathcal{D}$ such that $fu=0$. Then by the universal property of kernels, there exists a unique morphism $v: Z \to \Ker f$ such that $vg=u$. Since $h$ is a right $\mathcal{C}'$-approximation of $\Ker f$ and $Z \in \mathcal{C}'$ there exists a morphism $w: Z \to K$ such that $wh=v$. Together, we have that $u=vg=whg$, thus $u$ factors through $hg$. 
\[ \begin{tikzcd} &  Z \arrow[dr, "u"] \arrow[d, dashed, "\exists v"] \arrow[dl, dashed, "\exists w"'] & & \\ K \arrow[r, "h"'] & \arrow[r, "g"'] \Ker f & X \arrow[r, "f"'] & Y \end{tikzcd} \]
It remains to show that this factorisation is unique. Let $w': Z \to K$ be such that $u=w'(hg)$. Observe that $h$ and $g$ are both monomorphisms and hence so is $hg$. Then $w(hg)=u=w'(hg)$ and we conclude that $w=w'$.
\end{proof}

The previous result shows that kernels (resp. cokernels) in $\mathcal{C}' \cap \mathcal{D}$ are given by kernels in $\mathcal{C}'$ (resp. cokernels in $\mathcal{D}$). 

\begin{notation}
When it exists, we denote the kernel of a morphism $f$ in a subcategory $\mathcal{C}$ of an ambient category $\mathcal{A}$ by $\Ker_\mathcal{C} f$. 
\end{notation}

We observe that the exact structure on $\mathcal{C}' \cap \mathcal{D}$ inherited from $\mathcal{A}$ and the exact structure arising from short exact sequences in $\mathcal{C}' \cap \mathcal{D}$ coincide. 

\begin{prop} \label{exact c'nd}
 Let $\mathcal{A}$ be an abelian category and let $[(\mathcal{C}, \mathcal{D})$, $(\mathcal{C}', \mathcal{D}')]$ be twin torsion pairs in $\mathcal{A}$. Then a pair of composable morphisms $(f, g)$ in $\mathcal{C}' \cap \mathcal{D}$ is a short exact sequence in $\mathcal{C}' \cap \mathcal{D}$ if and only if it is a short exact sequence in $\mathcal{A}$. 
\end{prop}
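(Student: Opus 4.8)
The plan is to leverage Lemma \ref{preab} (and the remark following it) which already identifies kernels and cokernels in $\mathcal{C}' \cap \mathcal{D}$ with, respectively, kernels in $\mathcal{C}'$ and cokernels in $\mathcal{D}$. A short exact sequence $0 \to X \xrightarrow{f} Y \xrightarrow{g} Z \to 0$ in $\mathcal{C}' \cap \mathcal{D}$ means precisely that $f$ is a kernel of $g$ and $g$ is a cokernel of $f$, computed inside $\mathcal{C}' \cap \mathcal{D}$. So the two implications amount to comparing these kernels and cokernels with the ambient ones in $\mathcal{A}$.

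For the $(\Leftarrow)$ direction, suppose $(f,g)$ is short exact in $\mathcal{A}$. First I would observe that $f$ is automatically a kernel of $g$ in $\mathcal{C}' \cap \mathcal{D}$: the universal property is tested against a smaller class of objects, and the mediating morphism factors through $\mathcal{C}' \cap \mathcal{D}$ automatically since $X$ is already the target. More carefully, the construction of $\Ker_{\mathcal{C}' \cap \mathcal{D}} g$ in Lemma \ref{preab} takes $\Ker_\mathcal{A} g = X$ and applies the right $\mathcal{C}'$-approximation; since $X \in \mathcal{C}'$ already, this approximation is an isomorphism, so $\Ker_{\mathcal{C}' \cap \mathcal{D}} g = X$ via $f$. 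Dually, $\Coker_{\mathcal{C}' \cap \mathcal{D}} f = Z$ via $g$, using that $Z \in \mathcal{D}$ already so the relevant left $\mathcal{D}$-approximation (torsionfree quotient) of $\Coker_\mathcal{A} f = Z$ is an isomorphism. Hence $(f,g)$ is short exact in $\mathcal{C}' \cap \mathcal{D}$.

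For the $(\Rightarrow)$ direction, suppose $(f,g)$ is short exact in $\mathcal{C}' \cap \mathcal{D}$, i.e.\ $f = \ker_{\mathcal{C}' \cap \mathcal{D}} g$ and $g = \coker_{\mathcal{C}' \cap \mathcal{D}} f$. Here I expect the main obstacle: one must show $f$ is a monomorphism in $\mathcal{A}$, $g$ is an epimorphism in $\mathcal{A}$, and $f = \ker_\mathcal{A} g$. Monomorphy of $f$ follows because $hg$ in the proof of Lemma \ref{preab} is a composite of an $\mathcal{A}$-monomorphism with the inclusion of a subobject, hence an $\mathcal{A}$-monomorphism. For epimorphy of $g$: taking $\Coker_\mathcal{A} g$, its torsionfree-quotient (w.r.t.\ $(\mathcal{C}, \mathcal{D})$, noting $\mathcal{C}' \cap \mathcal{D} \subseteq \mathcal{D}$) would be the cokernel of $g$ in $\mathcal{C}' \cap \mathcal{D}$, which is $0$ since $g$ is already epi there; but then $\Coker_\mathcal{A} g \in \mathcal{C}$, and I must rule out a nonzero such cokernel — this is where the twin hypothesis $\mathcal{C} \subseteq \mathcal{C}'$ and the fact that $Y, Z \in \mathcal{C}'$ (so $\Coker_\mathcal{A} g \in \mathcal{C}'$, being a quotient of $Z$... actually a quotient of $Y$) combine with $\Coker_\mathcal{A} g \in \mathcal{D}$ to force it to be $0$. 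Finally, knowing $f$ mono and $g$ epi in $\mathcal{A}$, one compares $\Ker_\mathcal{A} g$ with $X$: there is a canonical map $X \to \Ker_\mathcal{A} g$; since $\Ker_\mathcal{A} g$ is a subobject of $Y \in \mathcal{C}' \cap \mathcal{D}$ it lies in $\mathcal{D}$, and applying its $\mathcal{C}'$-approximation gives back the kernel in $\mathcal{C}' \cap \mathcal{D}$, namely $X$; chasing this through shows $X \to \Ker_\mathcal{A} g$ is an isomorphism, so $(f,g)$ is exact in $\mathcal{A}$. The delicate point throughout is correctly tracking which torsion pair's approximation (the $(\mathcal{C}, \mathcal{D})$-one or the $(\mathcal{C}', \mathcal{D}')$-one) governs kernels versus cokernels, and invoking $\mathcal{C} \subseteq \mathcal{C}'$ exactly where the ambient cokernel/kernel needs to be recognised as already lying in the heart.
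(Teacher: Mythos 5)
Your backward direction is fine, and your observation that $f$ (being, up to isomorphism, the kernel constructed in Lemma \ref{preab}) is automatically an $\mathcal{A}$-monomorphism is correct — and the same reasoning, applied to the dual construction of cokernels, gives immediately that $g$ is an $\mathcal{A}$-epimorphism. Your actual argument for epimorphy, however, leans on the claim $\Coker_{\mathcal{A}} g \in \mathcal{D}$, which is unjustified: $\mathcal{D}$ is a torsionfree class, closed under subobjects but not under quotients, so being a quotient of $Z \in \mathcal{D}$ gives nothing. Fortunately that step can simply be replaced by the dual of your monomorphism argument.

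The genuine gap is the final step, which is the entire content of the forward direction. Knowing $f$ mono and $g$ epi in $\mathcal{A}$, you must show middle exactness, i.e.\ that the canonical map $X \to \Ker_{\mathcal{A}} g$ is an isomorphism — equivalently that $(\Ker_{\mathcal{A}} g)_{\mathcal{D}'} = 0$, since by Lemma \ref{preab} the given $f$ identifies $X$ with ${}_{\mathcal{C}'}(\Ker_{\mathcal{A}} g)$. You assert this follows by "chasing this through", but no chase of the kernel hypothesis alone will produce it: a priori the defect $(\Ker_{\mathcal{A}} g)_{\mathcal{D}'}$ only lies in $\mathcal{D}'$, and the dual defect ${}_{\mathcal{C}}(\Coker_{\mathcal{A}} f)$ only lies in $\mathcal{C}$, and neither class is zero. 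What is needed — and what the paper supplies — is a step linking the two hypotheses (kernel in the heart \emph{and} cokernel in the heart) simultaneously: comparing the $\mathcal{A}$-exact sequences $0 \to X \to Y \to \Coker_{\mathcal{A}} f \to 0$ and $0 \to \Ker_{\mathcal{A}} g \to Y \to Z \to 0$ via the identity on $Y$ and applying the Snake Lemma yields $(\Ker_{\mathcal{A}} g)_{\mathcal{D}'} \cong {}_{\mathcal{C}}(\Coker_{\mathcal{A}} f)$, an object of $\mathcal{C} \cap \mathcal{D}'$, which vanishes precisely because $\mathcal{C} \subseteq \mathcal{C}'$ forces $\mathcal{C} \cap \mathcal{D}' = 0$. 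This is where the twin hypothesis genuinely enters; in your sketch it is only invoked in the (flawed) epimorphism step, and nowhere in the step that actually needs it.
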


\begin{proof}
Let $(f: X \to Y, g: Y \to Z)$ be a short exact sequence  in $\mathcal{C}' \cap \mathcal{D}$. Then it follows from Lemma \ref{preab} that $X= \Ker_{\mathcal{C}'} g =  {}_{\mathcal{C}'}(\Ker g)$ and $Z = \Coker_\mathcal{D} f = (\Coker f)_\mathcal{D}$. Consider the commutative diagram with rows that are exact in $\mathcal{A}$

\[ \begin{tikzcd} & & & 0 \arrow[d] & \\ & 0 \arrow[d] & & _\mathcal{C}(\Coker f) \arrow[d, tail] & \\ & X \arrow[r, "f", tail] \arrow[d, tail] & Y \arrow[r, two heads] \arrow[d, "1"] & \Coker f \arrow[r] \arrow[two heads, d] & 0 \\ 0 \arrow[r] & \Ker g \arrow[d, two heads] \arrow[r, tail] & Y \arrow[r, "g", two heads] & Z \arrow[d] \\ & (\Ker g)_{\mathcal{D}'} \arrow[d] & & 0& \\ & 0 & & & \end{tikzcd} \] 

By the Snake lemma we see that $_\mathcal{C}(\Coker f) \cong (\Ker g)_{\mathcal{D}'} \in \mathcal{C} \cap \mathcal{D}'$. But as $\mathcal{C} \subseteq \mathcal{C}'$ we have that $\mathcal{C} \cap \mathcal{D}'=0$. Thus $X \cong \Ker g$, $Z \cong \Coker f$ proving the assertion.

The reverse implication is trivial.
\end{proof}

\begin{proof}[Proof of Theorem \ref{qab}] This follows directly from Lemma \ref{preab} and   Proposition \ref{exact c'nd} since the exact structure inherited from $\mathcal{A}$ consists of all short exact sequences in $\mathcal{C}' \cap \mathcal{D}$.
\end{proof}

\begin{rem} Not every quasi-abelian subcategory of an abelian category arises this way. For example, consider the linearly oriented quiver $Q$ of type $A_3$ as in Remark \ref{counterex1}. Then the subcategory $\mathcal{X} = \mathsf{add}\{ \begin{smallmatrix} 2 \\ 3 \end{smallmatrix} \oplus \begin{smallmatrix} 1  \\2 \end{smallmatrix}\} $ of $\mod KQ$  is quasi-abelian. Indeed, the kernel and cokernel of the morphism $\begin{smallmatrix} 2 \\ 3 \end{smallmatrix} \to \begin{smallmatrix} 1  \\2 \end{smallmatrix}$ are both the zero morphism and there are no non-trivial short exact sequences.  Suppose that $\mathcal{X} = \mathcal{C}' \cap \mathcal{D}$ for some twin torsion pairs $[(\mathcal{C}, \mathcal{D})$, $(\mathcal{C}', \mathcal{D}')]$. Then $ \mathsf{add} \{\begin{smallmatrix} 2  \end{smallmatrix} \}\subset \mathsf{Fac}\mathcal{X} \subseteq \mathcal{C}'$ and $ \mathsf{add} \{\begin{smallmatrix} 2  \end{smallmatrix} \}\subset \mathsf{Sub}\mathcal{X} \subseteq \mathcal{D}$, but $  \mathsf{add} \{\begin{smallmatrix} 2  \end{smallmatrix} \} \not\subset \mathcal{X}$. 
\end{rem}

\section{A bijection of torsion pairs}
In this section, we develop a bijection between the torsion pairs of the heart of two twin torsion pairs and a class of torsion pairs of the ambient category. 
We begin with a technical lemma.

\begin{lem} \label{radicalswitch}
Let $\mathcal{A}$ be an abelian category and let $[(\mathcal{C}, \mathcal{D})$, $(\mathcal{C}', \mathcal{D}')]$ be twin torsion pairs in $\mathcal{A}$. Then for all $M \in \mathcal{A}$, we have 
\be 
\item  $(_{\mathcal{C}'}{M})_\mathcal{D} \cong {}_{\mathcal{C}'}(M_\mathcal{D})=: {}_{\mathcal{C}'}M_\mathcal{D}$. 
\item $_\mathcal{C}(M_{\mathcal{D}'})\cong (_\mathcal{C}M)_{\mathcal{D}'} \cong 0$.
\item $_\mathcal{C}(_{\mathcal{C}'}M) \cong {}_\mathcal{C}{M} \cong {}_{\mathcal{C}'}{(_\mathcal{C}{M})}$.
\item $(M_\mathcal{D})_{\mathcal{D}'} \cong M_{\mathcal{D}'} \cong (M_{\mathcal{D}'})_\mathcal{D}$.
\ee
\end{lem}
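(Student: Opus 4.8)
The strategy is to isolate one structural observation and then read off all four isomorphisms from it, using only the closure properties of torsion and torsionfree classes. The observation is that the twin hypothesis $\mathcal{C}\subseteq\mathcal{C}'$ (equivalently $\mathcal{D}'\subseteq\mathcal{D}$) forces ${}_{\mathcal{C}}M$ to be a subobject of ${}_{\mathcal{C}'}M$ for every $M\in\mathcal{A}$: indeed ${}_{\mathcal{C}}M\in\mathcal{C}\subseteq\mathcal{C}'$ is a subobject of $M$, so by (T1) the composite ${}_{\mathcal{C}}M\rightarrowtail M\twoheadrightarrow M_{\mathcal{D}'}$ is zero and hence factors monomorphically through $\Ker(M\twoheadrightarrow M_{\mathcal{D}'})={}_{\mathcal{C}'}M$; dually $M_{\mathcal{D}'}$ is a quotient of $M_{\mathcal{D}}$. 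Throughout I would also use Proposition \ref{adfunk}: the assignment ${}_{\mathcal{C}}(-)$ is an idempotent subfunctor of the identity, so it is monotone on subobjects, ${}_{\mathcal{C}}N=N$ precisely when $N\in\mathcal{C}$, and ${}_{\mathcal{C}}N=0$ precisely when $N\in\mathcal{D}$ (and similarly for $\mathcal{C}'$, $\mathcal{D}'$).

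Granting this, parts (b) and (c) are immediate. For (c): ${}_{\mathcal{C}'}({}_{\mathcal{C}}M)={}_{\mathcal{C}}M$ since ${}_{\mathcal{C}}M\in\mathcal{C}\subseteq\mathcal{C}'$; and squeezing ${}_{\mathcal{C}}M\subseteq{}_{\mathcal{C}'}M\subseteq M$ and applying the monotone idempotent ${}_{\mathcal{C}}(-)$ gives ${}_{\mathcal{C}}M={}_{\mathcal{C}}({}_{\mathcal{C}}M)\subseteq{}_{\mathcal{C}}({}_{\mathcal{C}'}M)\subseteq{}_{\mathcal{C}}M$. For (b): $({}_{\mathcal{C}}M)_{\mathcal{D}'}=0$ because ${}_{\mathcal{C}}M\in\mathcal{C}\subseteq\mathcal{C}'$, and ${}_{\mathcal{C}}(M_{\mathcal{D}'})=0$ because $M_{\mathcal{D}'}\in\mathcal{D}'\subseteq\mathcal{D}$.

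Part (a) is the only part with genuine content, and it is where the twin hypothesis does the work. Using ${}_{\mathcal{C}}M\subseteq{}_{\mathcal{C}'}M$, form the subobject ${}_{\mathcal{C}'}M/{}_{\mathcal{C}}M$ of $M/{}_{\mathcal{C}}M=M_{\mathcal{D}}$. It lies in $\mathcal{C}'$, being a quotient of ${}_{\mathcal{C}'}M\in\mathcal{C}'$ (torsion classes are closed under quotients), and I claim it is the largest subobject of $M_{\mathcal{D}}$ with this property. Indeed, by the lattice isomorphism between subobjects of $M_{\mathcal{D}}$ and subobjects of $M$ containing ${}_{\mathcal{C}}M$, any subobject of $M_{\mathcal{D}}$ lying in $\mathcal{C}'$ has the form $N/{}_{\mathcal{C}}M$ with ${}_{\mathcal{C}}M\subseteq N\subseteq M$; the short exact sequence $0\to{}_{\mathcal{C}}M\to N\to N/{}_{\mathcal{C}}M\to 0$ has both outer terms in $\mathcal{C}'$, so $N\in\mathcal{C}'$ by closure under extensions, whence $N\subseteq{}_{\mathcal{C}'}M$ and $N/{}_{\mathcal{C}}M\subseteq{}_{\mathcal{C}'}M/{}_{\mathcal{C}}M$. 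Therefore ${}_{\mathcal{C}'}(M_{\mathcal{D}})={}_{\mathcal{C}'}M/{}_{\mathcal{C}}M$. On the other hand $({}_{\mathcal{C}'}M)_{\mathcal{D}}={}_{\mathcal{C}'}M/{}_{\mathcal{C}}({}_{\mathcal{C}'}M)={}_{\mathcal{C}'}M/{}_{\mathcal{C}}M$ by (c), and (a) follows.

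Finally (d) is the dual of (c): passing to $\mathcal{A}^{\mathrm{op}}$ sends the twin torsion pairs $[(\mathcal{C},\mathcal{D}),(\mathcal{C}',\mathcal{D}')]$ to the twin torsion pairs $[(\mathcal{D}',\mathcal{C}'),(\mathcal{D},\mathcal{C})]$ (the containment $\mathcal{C}\subseteq\mathcal{C}'$ of torsionfree classes becomes the containment $\mathcal{D}'\subseteq\mathcal{D}$ of torsion classes), and under this duality the torsion radical ${}_{\mathcal{D}}(-)$ in $\mathcal{A}^{\mathrm{op}}$ corresponds to the torsionfree quotient $(-)_{\mathcal{D}}$ in $\mathcal{A}$, and likewise ${}_{\mathcal{D}'}(-)$ corresponds to $(-)_{\mathcal{D}'}$; so the statement of (c) read in $\mathcal{A}^{\mathrm{op}}$ is exactly $(M_{\mathcal{D}})_{\mathcal{D}'}\cong M_{\mathcal{D}'}\cong(M_{\mathcal{D}'})_{\mathcal{D}}$. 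The only delicate point in the whole argument is keeping the containments of subobjects straight — the opening observation — and the lattice-of-subobjects step in (a); everything else is formal manipulation with the radical functors.
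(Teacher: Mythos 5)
Your argument is correct, but it takes a different route from the paper. The paper proves (a) and half of (c) by splicing the $(\mathcal{C},\mathcal{D})$-canonical sequence of $M$ with the $(\mathcal{C}',\mathcal{D}')$-canonical sequence of $M_\mathcal{D}$ into a pullback diagram, applying the Snake Lemma, and invoking uniqueness of canonical short exact sequences to identify the middle term with ${}_{\mathcal{C}'}M$; (b) and (d) are handled by a second diagram and ``similarly''. You instead work entirely in the subobject lattice: the twin hypothesis gives ${}_{\mathcal{C}}M\subseteq{}_{\mathcal{C}'}M$, parts (b) and (c) fall out of idempotence, monotonicity and the vanishing criteria for the radical functors of Proposition \ref{adfunk}, part (a) is proved by identifying ${}_{\mathcal{C}'}(M_\mathcal{D})$ with ${}_{\mathcal{C}'}M/{}_{\mathcal{C}}M$ as the maximal $\mathcal{C}'$-subobject of $M_\mathcal{D}$ via the correspondence between subobjects of $M/{}_{\mathcal{C}}M$ and subobjects of $M$ containing ${}_{\mathcal{C}}M$ (using closure of $\mathcal{C}'$ under quotients and extensions), and (d) comes for free by passing to $\mathcal{A}^{\mathrm{op}}$. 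Your version makes the role of the inclusion $\mathcal{C}\subseteq\mathcal{C}'$ more transparent and avoids diagram chases, at the cost of leaning on the maximality characterisation of the torsion subobject (valid here since $\mathcal{A}$ is abelian); the paper's diagrammatic computation has the advantage of explicitly producing the short exact sequence $0\to{}_{\mathcal{C}}M\to{}_{\mathcal{C}'}M\to{}_{\mathcal{C}'}(M_\mathcal{D})\to 0$, in the same style as the pullback/pushout constructions reused in the proof of Theorem \ref{torsbij1}. The only point you leave implicit is that ${}_{\mathcal{C}'}(M_\mathcal{D})$ is itself the largest $\mathcal{C}'$-subobject of $M_\mathcal{D}$, so that your maximality claim forces equality rather than mere containment; this is immediate from the factorisation property you cite at the outset, so it is a presentational remark rather than a gap.
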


\begin{proof}
Let $M \in \mathcal{A}$, using the $(\mathcal{C}, \mathcal{D})$-canonical short exact sequence of $M$ and the $(\mathcal{C}', \mathcal{D}')$-canonical short exact sequence of $M_\mathcal{D}$, we build the following commutative diagram 
\[ \begin{tikzcd} & & & 0 \arrow[d] & \\ 0 \arrow[r] & \leftidx{_\mathcal{C}}{M} \arrow[d, "1"] \arrow[r] & E \arrow[r] \arrow[d, "f"]  \arrow[dr, phantom, "\lrcorner", very near end] & _{\mathcal{C}'}(M_\mathcal{D}) \arrow[r] \arrow[d] & 0 \\ 0 \arrow[r] & \leftidx{_\mathcal{C}}{M} \arrow[r] &M \arrow[r] & M_{\mathcal{D}} \arrow[r] \arrow[d] & 0 \\ & & & (M_\mathcal{D})_{\mathcal{D}'} \arrow[d] & \\ & & &0. &   \end{tikzcd} \]
We make some observations. First note that as $\mathcal{C} \subseteq \mathcal{C}'$ and $\mathcal{C}'$ is closed under extensions, the upper short exact sequence shows that $E \in \mathcal{C}'$. Secondly, by using the Snake Lemma we see that $f$ is a monomorphism and we have a short exact sequence 
\[ \begin{tikzcd} 0 \arrow[r] & E \arrow[r, "f"] & M \arrow[r] & (M_\mathcal{D})_{\mathcal{D}'} \arrow[r] & 0 \end{tikzcd} \] with first term in $\mathcal{C}'$ and last term in $\mathcal{D}'$. Hence, by uniqueness of torsion canonical short exact sequences, we have that $E \cong \leftidx{_{\mathcal{C}'}}{M}$. Now the top row can be written as
\[ \begin{tikzcd} 0 \arrow[r] &\leftidx{_\mathcal{C}}{M} \arrow[r] & \arrow[r]\leftidx{_{\mathcal{C}'}}{M} & \arrow[r] _{\mathcal{C}'}(M_\mathcal{D}) & 0 \end{tikzcd} \] 
which has first term in $\mathcal{C}$ and, as $\mathcal{D}$ is closed under submodules, last term in $\mathcal{D}$. Thus we conclude that $(\leftidx{_{\mathcal{C}'}}{M})_\mathcal{D} \cong {}_{\mathcal{C}'}(M_\mathcal{D})$ and $_\mathcal{C}{(_{\mathcal{C}'}{M})} \cong \leftidx{_\mathcal{C}}{M}$. 

The fact that $_\mathcal{C}(M_{\mathcal{D}'}) \cong 0$ and $(M_\mathcal{D})_{\mathcal{D}'} \cong M_{\mathcal{D}'}$ follows from the commutative diagram with exact rows 
\[ \begin{tikzcd} 0 \arrow[r] & _\mathcal{C}(M_{\mathcal{D}'}) \arrow[d] \arrow[r] & M_{\mathcal{D}'} \arrow[d, "1"] \arrow[r] & (M_\mathcal{D})_{\mathcal{D}'}  \arrow[d] \arrow[r] & 0 \\ 0 \arrow[r] & 0 \arrow[r] &  M_{\mathcal{D}'} \arrow[r, "1"] & M_{\mathcal{D}'} \arrow[r] & 0
\end{tikzcd} \] and the uniqueness of torsion short exact sequences. The remaining isomorphisms are proved similarly. 
\end{proof}

\begin{defn}
Let $\mathcal{A}$ be a pre-abelian category. For two subcategories $\mathcal{X}, \mathcal{Y}$ of $\mathcal{A}$, by $\mathcal{X}\ast \mathcal{Y}$ we denote the subcategory of $\mathcal{A}$ consisting of objects $M \in \mathcal{A}$ for which there exists a short exact sequence
\[ \begin{tikzcd} 0 \arrow[r] & M' \arrow[r, tail] & M \arrow[r, two heads] & M'' \arrow[r] & 0 \end{tikzcd} \] with $M' \in \mathcal{X}$ and $M'' \in \mathcal{Y}$. 
\end{defn}

\begin{rem}
It follows immediately from the definition that if $0 \in \mathcal{X}$ then  $\mathcal{Y} \subseteq \mathcal{X}\ast \mathcal{Y}$ and dually, if $0 \in \mathcal{Y}$ then $\mathcal{X} \subseteq \mathcal{X}\ast \mathcal{Y}$.
\end{rem}

The main result of this section is the following.
\begin{thm} \label{torsbij1}
Let $\mathcal{A}$ be an abelian category and let $[(\mathcal{C}, \mathcal{D})$, $(\mathcal{C}', \mathcal{D}')]$ be twin torsion pairs in $\mathcal{A}$. Then there is an inclusion preserving bijection:
\begin{align*} \{(\mathcal{X}, \mathcal{Y}) \text{ torsion pair in } \mathcal{A}\ |\ \mathcal{C} \subseteq \mathcal{X} \subseteq \mathcal{C}'\} & \longleftrightarrow  \{ (\mathcal{T}, \mathcal{F}) \text{ torsion pair in }\mathcal{C}'\cap \mathcal{D} \} \\ (\mathcal{X}, \mathcal{Y}) & \longmapsto  (\mathcal{X} \cap \mathcal{D}, \mathcal{Y}\cap \mathcal{C}') \\ (\mathcal{C}\ast \mathcal{T}, \mathcal{F} \ast \mathcal{D}') & \longmapsfrom  (\mathcal{T}, \mathcal{F}).
\end{align*} 
\end{thm}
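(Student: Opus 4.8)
The plan is to exhibit the two assignments
\[ \Phi\colon(\mathcal{X},\mathcal{Y})\longmapsto(\mathcal{X}\cap\mathcal{D},\ \mathcal{Y}\cap\mathcal{C}'),\qquad \Psi\colon(\mathcal{T},\mathcal{F})\longmapsto(\mathcal{C}\ast\mathcal{T},\ \mathcal{F}\ast\mathcal{D}') \]
as well-defined, mutually inverse, inclusion-preserving maps. Throughout I would use the following standing facts in the abelian category $\mathcal{A}$: for any torsion pair $(\mathcal{U},\mathcal{V})$ one has $\mathcal{V}=\mathcal{U}^{\perp}$ and $\mathcal{U}={}^{\perp}\mathcal{V}$ (Proposition \ref{additivetors} together with (T1)), hence $\mathcal{U}\cap\mathcal{V}=0$; torsion classes are closed under quotients and extensions, torsionfree classes under subobjects and extensions; and from $\mathcal{C}\subseteq\mathcal{C}'$ one gets $\mathcal{D}'\subseteq\mathcal{D}$, the equalities $\mathcal{C}\cap\mathcal{D}=\mathcal{C}'\cap\mathcal{D}'=\mathcal{C}\cap\mathcal{D}'=0$, and, for $(\mathcal{X},\mathcal{Y})$ in the left-hand set, $\mathcal{D}'\subseteq\mathcal{Y}\subseteq\mathcal{D}$.

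For $\Phi$, I would first note that $\mathcal{X}\cap\mathcal{D}\subseteq\mathcal{C}'\cap\mathcal{D}$ and, using $\mathcal{Y}\subseteq\mathcal{D}$, that $\mathcal{Y}\cap\mathcal{C}'\subseteq\mathcal{C}'\cap\mathcal{D}$, so both components are full subcategories of the heart; condition (T1) there is inherited from $\Hom_{\mathcal{A}}(\mathcal{X},\mathcal{Y})=0$ by fullness. For (T2), given $M$ in the heart take its $(\mathcal{X},\mathcal{Y})$-canonical sequence $0\to{}_{\mathcal{X}}M\to M\to M_{\mathcal{Y}}\to 0$ in $\mathcal{A}$: here ${}_{\mathcal{X}}M\in\mathcal{X}$ is a subobject of $M\in\mathcal{D}$, so ${}_{\mathcal{X}}M\in\mathcal{X}\cap\mathcal{D}$, and $M_{\mathcal{Y}}\in\mathcal{Y}$ is a quotient of $M\in\mathcal{C}'$, so $M_{\mathcal{Y}}\in\mathcal{Y}\cap\mathcal{C}'$; since all three terms lie in the heart, Proposition \ref{exact c'nd} guarantees the sequence is short exact in $\mathcal{C}'\cap\mathcal{D}$.

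The bulk of the work is showing $\Psi$ is well defined. That $\mathcal{C}\subseteq\mathcal{C}\ast\mathcal{T}\subseteq\mathcal{C}'$ is immediate ($0\in\mathcal{T}$, and $\mathcal{C}'$ is closed under extensions). For (T1), given $f\colon M\to N$ with $M\in\mathcal{C}\ast\mathcal{T}$, $N\in\mathcal{F}\ast\mathcal{D}'$ and fixed short exact sequences $0\to M'\to M\to M''\to 0$ ($M'\in\mathcal{C}$, $M''\in\mathcal{T}$) and $0\to N'\to N\to N''\to 0$ ($N'\in\mathcal{F}$, $N''\in\mathcal{D}'$), I peel off layers: $M\in\mathcal{C}'$ and $N''\in\mathcal{D}'=(\mathcal{C}')^{\perp}$ force $f$ through $N'$; then $M'\in\mathcal{C}$ maps to $0$ in $N'\in\mathcal{D}=\mathcal{C}^{\perp}$, so $f$ factors through $M''$; and $\Hom_{\mathcal{A}}(M'',N')=\Hom_{\mathcal{C}'\cap\mathcal{D}}(M'',N')=0$ by (T1) for $(\mathcal{T},\mathcal{F})$, so $f=0$. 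The delicate point is (T2). For $M\in\mathcal{A}$ I would form in turn its $(\mathcal{C},\mathcal{D})$-canonical sequence $0\to{}_{\mathcal{C}}M\to M\xrightarrow{p}M_{\mathcal{D}}\to 0$; the object $K:={}_{\mathcal{C}'}(M_{\mathcal{D}})$, which lies in the heart ($K\in\mathcal{C}'$ and $K$ is a subobject of $M_{\mathcal{D}}\in\mathcal{D}$; cf.\ Lemma \ref{radicalswitch}); the $(\mathcal{T},\mathcal{F})$-canonical sequence $0\to\mathsf{t}K\to K\to K_{\mathcal{F}}\to 0$ computed inside the heart, which by Proposition \ref{exact c'nd} is also exact in $\mathcal{A}$; and finally $T:=p^{-1}(\mathsf{t}K)\subseteq M$. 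Then restricting $p$ gives $0\to{}_{\mathcal{C}}M\to T\to\mathsf{t}K\to 0$, so $T\in\mathcal{C}\ast\mathcal{T}$, while $M/T\cong M_{\mathcal{D}}/\mathsf{t}K$ fits in $0\to K_{\mathcal{F}}\to M/T\to(M_{\mathcal{D}})_{\mathcal{D}'}\to 0$ with outer terms in $\mathcal{F}$ and $\mathcal{D}'$, so $M/T\in\mathcal{F}\ast\mathcal{D}'$; this is the required canonical sequence. I expect this coordination of the three torsion radicals — and checking that the pullback $T$ genuinely has the claimed sub/quotient structure — to be the main obstacle.

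It then remains to check the composites and monotonicity. For $\Phi\Psi=\mathrm{id}$: $\mathcal{T}\subseteq(\mathcal{C}\ast\mathcal{T})\cap\mathcal{D}$ is clear, and if $M\in(\mathcal{C}\ast\mathcal{T})\cap\mathcal{D}$ with $\mathcal{C}$-subobject $M'$, then $M'$ is a subobject of $M\in\mathcal{D}$, so $M'\in\mathcal{C}\cap\mathcal{D}=0$ and $M$ is isomorphic to its $\mathcal{T}$-quotient; dually $(\mathcal{F}\ast\mathcal{D}')\cap\mathcal{C}'=\mathcal{F}$ using $\mathcal{C}'\cap\mathcal{D}'=0$. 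For $\Psi\Phi=\mathrm{id}$: $\mathcal{C}\ast(\mathcal{X}\cap\mathcal{D})\subseteq\mathcal{X}$ by closure under extensions, while any $M\in\mathcal{X}$ has $(\mathcal{C},\mathcal{D})$-canonical sequence with $\mathcal{C}$-part in $\mathcal{C}$ and $\mathcal{D}$-part a quotient of $M$, hence in $\mathcal{X}\cap\mathcal{D}$, so $M\in\mathcal{C}\ast(\mathcal{X}\cap\mathcal{D})$; dually $(\mathcal{Y}\cap\mathcal{C}')\ast\mathcal{D}'=\mathcal{Y}$, using $\mathcal{D}'\subseteq\mathcal{Y}$ and closure of $\mathcal{Y}$ under subobjects and extensions. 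Finally, both $\Phi$ and $\Psi$ are clearly inclusion preserving, since $\cap$ and $-\ast-$ are monotone in each argument.
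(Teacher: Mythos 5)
Your proposal is correct and follows essentially the same route as the paper: the same well-definedness checks for $(\mathcal{X},\mathcal{Y})\mapsto(\mathcal{X}\cap\mathcal{D},\mathcal{Y}\cap\mathcal{C}')$, the same two-step filtration ${}_{\mathcal{C}}M\subseteq T\subseteq M$ (your $T=p^{-1}(\mathsf{t}K)$ with $K={}_{\mathcal{C}'}(M_{\mathcal{D}})$ is exactly the paper's pullback/pushout construction, with $K\cong({}_{\mathcal{C}'}M)_{\mathcal{D}}$ as in Lemma \ref{radicalswitch}), and the same mutual-inverse arguments. Your local shortcuts are fine and even slightly cleaner: the peel-off factorisation replaces the paper's Snake Lemma argument for (T1) of $(\mathcal{C}\ast\mathcal{T},\mathcal{F}\ast\mathcal{D}')$, and using closure of $\mathcal{X}$ under quotients to get $M_{\mathcal{D}}\in\mathcal{X}\cap\mathcal{D}$ replaces the paper's pullback-and-vanishing-cokernel argument for $\mathcal{X}=\mathcal{C}\ast(\mathcal{X}\cap\mathcal{D})$.
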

\begin{proof}
We begin by showing the maps are well-defined. First, let $(\mathcal{X}, \mathcal{Y})$ be a torsion pair on $\mathcal{A}$ and suppose that $\mathcal{C} \subseteq \mathcal{X} \subseteq \mathcal{C}'$.  Observe that  $\mathcal{X} \cap \mathcal{D}$ and $\mathcal{Y} \cap \mathcal{C}'$ are  subcategories of $\mathcal{C}'\cap \mathcal{D} $ and we have that $\Hom_{\mathcal{C}'\cap \mathcal{D}}( \mathcal{X} \cap \mathcal{D}, \mathcal{Y} \cap \mathcal{C}')=0$ thus (T1) is satisfied. To verify (T2), let $M \in \mathcal{C}'\cap \mathcal{D}$ and consider the $(\mathcal{X}, \mathcal{Y})$-canonical short exact sequence of $M$
\[ \begin{tikzcd} 0 \arrow[r] &\leftidx{_\mathcal{X}}{M} \arrow[r] & M \arrow[r] & M_\mathcal{Y} \arrow[r] & 0. \end{tikzcd} \] Now as $\mathcal{D}$ is closed under subobjects, $\leftidx{_\mathcal{X}}{M} \in \mathcal{D}$ and thus $\leftidx{_\mathcal{X}}{M} \in \mathcal{X} \cap \mathcal{D}$. Similarly, as $\mathcal{C}'$ is closed under quotients we have that $M_{\mathcal{Y}} \in \mathcal{Y} \cap \mathcal{C}'$. Thus, $(\mathcal{X} \cap \mathcal{D}, \mathcal{Y}\cap \mathcal{C}') $ is a torsion pair in $\mathcal{C}'\cap \mathcal{D} $. 

Conversely, let $(\mathcal{T}, \mathcal{F})$ be a torsion pair in  $\mathcal{C}'\cap \mathcal{D} $. By definition, we have that $\mathcal{C} \subseteq \mathcal{C} \ast \mathcal{T} $ and as $\mathcal{C}'$ is closed under extensions and $\mathcal{T} \subseteq \mathcal{C}'$ we have that $\mathcal{C} \ast \mathcal{T} \subseteq \mathcal{C}'$. Now to show that $(\mathcal{C}\ast \mathcal{T}, \mathcal{F} \ast \mathcal{D}') $ satsifies (T1), let $f: M \to N$ be an arbitrary morphism with $M \in \mathcal{C}\ast \mathcal{T}$ and $N \in  \mathcal{F} \ast \mathcal{D}'$. Consider the diagram
\[ \begin{tikzcd} 0 \arrow[r]& M' \arrow[d, "\exists f'", dashed] \arrow[r] & M \arrow[r] \arrow[d, "f"] & M'' \arrow[r] \arrow[d, "\exists f''", dashed] & 0 \\ 0 \arrow[r] & N' \arrow[r] & N \arrow[r] & N'' \arrow[r] & 0 \end{tikzcd} \] 
where the top (respectively bottom) row shows that $M$ (respectively $N$) is an element of  $\mathcal{C}\ast \mathcal{T}$ (respectively $\mathcal{F} \ast \mathcal{D}'$). That is, $M' \in \mathcal{C}$, $M'' \in \mathcal{T}$, $N' \in \mathcal{F}$ and $N'' \in \mathcal{D}'$. Observe that as $\Hom_\mathcal{A}(\mathcal{C}, \mathcal{D}') =0$, by the universal property of kernels (respectively, cokernels) there exists $f': M' \to N'$ (resp. $f'': M'' \to N''$) rendering the diagram commutative. But since $F \subseteq \mathcal{D}$, we have that $\Hom_\mathcal{A}(\mathcal{C}, \mathcal{F})=0$, so $f'=0$. Similarly, as $T \subseteq \mathcal{C}'$, $f''=0$. By the Snake Lemma there is an exact sequence 
\[ \begin{tikzcd} 0 \arrow[r] & M' \arrow[r] &\Ker f  \arrow[r] & M'' \arrow[r, "\delta"] & N' \arrow[r] & \Coker f \arrow[r] &
 N'' \arrow[r] & 0. \end{tikzcd} \] 
Then $\delta =0$ as $\Hom_{\mathcal{A}}( \mathcal{T}, \mathcal{F}) = \Hom_{\mathcal{C}' \cap \mathcal{D}}(\mathcal{T}, \mathcal{F})=0$. We conclude that $\Ker f \cong M$, $\Coker f \cong N$ and $f=0$.

To show (T2) let $M \in \mathcal{A}$. We begin by using the $(\mathcal{C}', \mathcal{D}')$-canonical short exact sequence of $M$ and the $(\mathcal{C}, \mathcal{D})$-canonical short exact sequence of $\leftidx{_{\mathcal{C}'}}{M}$ to form the pushout of short exact sequences
\[ \begin{tikzcd}
& 0 \arrow[d] & & & \\ 
& \leftidx{_\mathcal{C}}{M} \arrow[d] & & & \\ 0\arrow[r]& \leftidx{_{\mathcal{C}'}}{M} \arrow[r] \arrow[d] \arrow[dr, phantom, "\ulcorner", very near start] &M \arrow[r] \arrow[d] & M_{\mathcal{D}'} \arrow[r] \arrow[d, "1"] & 0 \\ 0 \arrow[r] & \leftidx{_{\mathcal{C}'}}{M}_\mathcal{D} \arrow[r] \arrow[d] & P \arrow[r] & M_{\mathcal{D}'} \arrow[r] &0 \\ & 0. & & & \end{tikzcd} \]
Note that, by the Snake Lemma we have a short exact sequence 
\begin{equation}\label{ses1}
\begin{tikzcd} 0 \arrow[r] &  \leftidx{_\mathcal{C}}{M} \arrow[r] & M \arrow[r] & P \arrow[r] & 0. \end{tikzcd}
\end{equation}
Now, we use the lower short exact sequence of the above diagram and the $(\mathcal{T}, \mathcal{F})$-canonical short exact sequence of $\leftidx{_{\mathcal{C}'}}{M}_\mathcal{D}$ to form the pushout of short exact sequences 
\[ \begin{tikzcd}
& 0 \arrow[d] & & & \\ 
& _\mathcal{T}({_{\mathcal{C}'}}{M}_\mathcal{D}) \arrow[d] & & & \\ 0\arrow[r]& \leftidx{_{\mathcal{C}'}}{M}_\mathcal{D} \arrow[r] \arrow[d] \arrow[dr, phantom, "\ulcorner", very near start] &P \arrow[r] \arrow[d] & M_{\mathcal{D}'} \arrow[r] \arrow[d, "1"] & 0 \\ 0 \arrow[r] & {(_{\mathcal{C}'}{M}_\mathcal{D})}_\mathcal{F} \arrow[r] \arrow[d] & Q \arrow[r] & M_{\mathcal{D}'} \arrow[r] &0 \\ & 0. & & & \end{tikzcd} \]
Then the lower short exact sequence shows that $Q \in \mathcal{F} \ast \mathcal{D}'$ and by the Snake Lemma we have a short exact sequence 
\[\begin{tikzcd} 0 \arrow[r] &  _\mathcal{T}{(_{\mathcal{C}'}{M}_\mathcal{D})} \arrow[r] & P \arrow[r] & Q \arrow[r] & 0. \end{tikzcd} \]
Finally we use this short exact sequence and Sequence (\ref{ses1}) to form the pullback of short exact sequences 
\[ \begin{tikzcd} & & & 0 \arrow[d] & \\ 0 \arrow[r] & \leftidx{_{\mathcal{C}}}{M} \arrow[d, "1"] \arrow[r] & R \arrow[r] \arrow[d, "f"]  \arrow[dr, phantom, "\lrcorner", very near end] & _\mathcal{T}{(_{\mathcal{C}'}{M}_\mathcal{D})} \arrow[r] \arrow[d] & 0 \\ 0 \arrow[r] & \leftidx{_{\mathcal{C}}}{M} \arrow[r] &M \arrow[r] & P\arrow[r] \arrow[d] & 0 \\ & & & Q \arrow[d] & \\ & & &0. &   \end{tikzcd} \]
We observe that the upper short exact sequence shows that $R \in \mathcal{C} \ast \mathcal{T}$. Now by the Snake lemma, there is a short exact sequence 
\[ \begin{tikzcd} 0 \arrow[r] & R \arrow[r] & M \arrow[r] & Q \arrow[r] & 0 \end{tikzcd} \] which shows that (T2) is satisfied. 

We show that the mappings are mutually inverse. Let $(\mathcal{X}, \mathcal{Y})$ be a torsion pair in $\mathcal{A}$ such that $\mathcal{C} \subseteq \mathcal{X} \subseteq \mathcal{C}'$. We claim that $\mathcal{X} = \mathcal{C} \ast (\mathcal{X} \cap \mathcal{D})$. Let $M \in \mathcal{X}$. Observe that, as $\mathcal{C}'$ is closed under quotients and $\mathcal{X}\subseteq \mathcal{C}'$, we have $M_\mathcal{D} \in \mathcal{C}' \cap \mathcal{D}$. Therefore we may build the pullback of short exact sequences using the $(\mathcal{C}, \mathcal{D})$-canonical short exact sequence of $M$ in $\mathcal{A}$ and the $(\mathcal{X}\cap \mathcal{D}, \mathcal{Y} \cap \mathcal{C}')$-canonical short exact sequence of $M_\mathcal{D}$ in $\mathcal{C}' \cap \mathcal{D}$
\[ \begin{tikzcd} & & & 0 \arrow[d] & \\ 0 \arrow[r] & \leftidx{_\mathcal{C}}{M} \arrow[r] \arrow[d, "1"] & E \arrow[d, "f"] \arrow[dr, phantom, "\lrcorner", very near end] \arrow[r] &_{\mathcal{X} \cap \mathcal{D}}{(M_\mathcal{D})} \arrow[r] \arrow[d] & 0 \\ 0 \arrow[r] & _{\mathcal{C}}{M} \arrow[r] & M \arrow[r] & M_\mathcal{D} \arrow[r] \arrow[d] & 0 \\  & & & (M_\mathcal{D})_{\mathcal{Y}\cap \mathcal{C}'} \arrow[d] & \\ & & & 0 & \end{tikzcd} \]
Observe that the upper short exact sequence shows that $E$ is an element of $\mathcal{C} \ast (\mathcal{X} \cap \mathcal{D})$. By the Snake Lemma, we see that $\Coker f \cong (M_\mathcal{D})_{\mathcal{Y}\cap \mathcal{C}'}$. As $M\in \mathcal{X}$, $\Hom_\mathcal{A}(M, \mathcal{Y})=0$ and so $\Coker f =0$. Thus $M \cong E \in \mathcal{C} \ast (\mathcal{X} \cap \mathcal{D})$. The reverse inclusion is clear since both $\mathcal{C}$ and $\mathcal{X} \cap \mathcal{D}$ are contained in $\mathcal{X}$ and $\mathcal{X}$ is closed under extensions. The fact that $\mathcal{Y}= (\mathcal{Y} \cap \mathcal{C}') \ast \mathcal{D}'$ follows by a dual argument. 

Let $(\mathcal{T}, \mathcal{F})$ be a torsion pair in $\mathcal{C}' \cap \mathcal{D}$. We claim that $\mathcal{T}= (\mathcal{C}\ast \mathcal{T}) \cap \mathcal{D}$. Let $M \in (\mathcal{C}\ast \mathcal{T}) \cap \mathcal{D}$. As $M \in \mathcal{C} \ast \mathcal{T}$ there is a short exact sequence \[ \begin{tikzcd} 0 \arrow[r] & C \arrow[r, "f"] & M \arrow[r] & T \arrow[r] & 0 \end{tikzcd} \] with $C \in \mathcal{C}$ and $T \in \mathcal{T}$. Now, as $M \in \mathcal{D}$, $\Hom_\mathcal{A}(\mathcal{C}, M)=0$ and, in particular, $f=0$. Thus $M \cong T \in \mathcal{T}$. The reverse inclusion is clear since $\mathcal{T}\subseteq \mathcal{D}$ by assumption and $\mathcal{T} \subseteq \mathcal{C} \ast \mathcal{T}$ trivially. The fact that $\mathcal{F}= (\mathcal{F} \ast \mathcal{D}') \cap \mathcal{C}'$ holds follows by a dual argument. 
\end{proof}
The following Corollary is a direct consequence of the inclusion preserving property of the bijection in Theorem \ref{torsbij1}. We note that this generalises Theorem 4.2 in \cite{asai2019wide}, where the same result is shown to hold in the case that $\mathcal{C}' \cap \mathcal{D}$ is wide, that is, closed under kernel, cokernels and extensions. 

\begin{cor} \label{asai}  Let $\mathcal{A}$ be an abelian length category and let $[(\mathcal{C}, \mathcal{D})$, $(\mathcal{C}', \mathcal{D}')]$ be twin torsion pairs in $\mathcal{A}$. Then the set of all torsion classes in $\mathcal{C}' \cap \mathcal{D}$ is a complete lattice isomorphic to the lattice interval $[\mathcal{C}, \mathcal{C}']$ of the complete lattice of torsion classes in $\mathcal{A}$.
\end{cor}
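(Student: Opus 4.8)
The plan is to deduce this purely formally from Theorem \ref{torsbij1}, by checking that the bijection established there is an isomorphism of posets and then transporting the complete-lattice structure along it.

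First I would reduce everything about torsion pairs to a statement about torsion classes. In any pre-abelian category a torsion pair $(\mathcal{T}, \mathcal{F})$ is recovered from its torsion class alone, since axiom (T1) together with Proposition \ref{additivetors}(b) force $\mathcal{F} = \mathcal{T}^{\perp}$, and dually $\mathcal{T}$ is recovered from $\mathcal{F}$. Hence, ordering torsion pairs by inclusion of their torsion classes, the domain of the bijection in Theorem \ref{torsbij1} is order-isomorphic to the interval $[\mathcal{C}, \mathcal{C}']$ inside the poset $\mathsf{tors}\,\mathcal{A}$ of torsion classes of $\mathcal{A}$, and its codomain is order-isomorphic to the poset of torsion classes of the category $\mathcal{C}' \cap \mathcal{D}$ (which is pre-abelian, indeed quasi-abelian, by Lemma \ref{preab} and Theorem \ref{qab}).

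Next I would verify that both assignments of Theorem \ref{torsbij1} are order preserving. Monotonicity of $(\mathcal{X}, \mathcal{Y}) \mapsto (\mathcal{X} \cap \mathcal{D}, \mathcal{Y} \cap \mathcal{C}')$ is immediate; monotonicity of the inverse follows because $\mathcal{T}_1 \subseteq \mathcal{T}_2$ forces $\mathcal{C} \ast \mathcal{T}_1 \subseteq \mathcal{C} \ast \mathcal{T}_2$ straight from the definition of $\ast$. Thus Theorem \ref{torsbij1} already yields an isomorphism of posets $[\mathcal{C}, \mathcal{C}'] \cong \mathsf{tors}(\mathcal{C}' \cap \mathcal{D})$.

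Finally I would invoke two standard facts. In an abelian length category a full subcategory is a torsion class precisely when it is closed under quotients and extensions, and an arbitrary intersection of such subcategories is again of this form; hence $\mathsf{tors}\,\mathcal{A}$ is a complete lattice, with meet given by intersection. Moreover, any interval of a complete lattice is itself a complete lattice, so $[\mathcal{C}, \mathcal{C}']$ is complete. Since an order isomorphism of posets automatically preserves all suprema and infima that exist on either side, $\mathsf{tors}(\mathcal{C}' \cap \mathcal{D})$ inherits the structure of a complete lattice and the bijection of Theorem \ref{torsbij1} becomes an isomorphism of complete lattices. There is no real obstacle here: the only genuine verifications are the monotonicity of the inverse map and the well-known fact that $\mathsf{tors}\,\mathcal{A}$ is closed under arbitrary intersections, both routine; the substance of the statement is entirely carried by Theorem \ref{torsbij1}.
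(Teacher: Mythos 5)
Your proposal is correct and follows exactly the route the paper intends: the paper states the corollary as a direct consequence of the inclusion-preserving bijection of Theorem \ref{torsbij1}, and your write-up simply supplies the routine details (torsion pairs determined by their torsion classes via Proposition \ref{additivetors}, monotonicity of both maps, completeness of the lattice of torsion classes in a length category and of its intervals). Nothing is missing.
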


The proof of the following Lemma is straightforward and is left to the reader. 

\begin{lem} \label{starstar}
Let $\mathcal{A}$ be an abelian category, $[(\mathcal{C}, \mathcal{D})$, $(\mathcal{C}', \mathcal{D}')]$ be twin torsion pairs in $\mathcal{A}$ and let $\mathcal{T}$ be a torsion class in $\mathcal{C}' \cap \mathcal{D}$. Then for $X \in \mathcal{A}$, we have that $X \in \mathcal{C} \ast \mathcal{T}$ if and only if $X_\mathcal{D} \in \mathcal{T}$. 
In particular, any short exact sequence showing $X$ as an element of $\mathcal{C}\ast \mathcal{T}$ is isomorphic to the $(\mathcal{C}, \mathcal{D})$-canonical short exact sequence of $X$.

\end{lem}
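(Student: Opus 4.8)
The plan is to prove the two directions of the equivalence separately, then derive the "in particular" clause from the forward direction together with uniqueness of canonical sequences. For the direction $X_\mathcal{D} \in \mathcal{T} \Rightarrow X \in \mathcal{C} \ast \mathcal{T}$, I would start from the $(\mathcal{C},\mathcal{D})$-canonical short exact sequence $0 \to {}_\mathcal{C}X \to X \to X_\mathcal{D} \to 0$. Since ${}_\mathcal{C}X \in \mathcal{C}$ and, by hypothesis, $X_\mathcal{D} \in \mathcal{T}$, this very sequence exhibits $X$ as an object of $\mathcal{C} \ast \mathcal{T}$ by definition. (Here one only needs that $\mathcal{T} \subseteq \mathcal{C}' \cap \mathcal{D}$, so that $X_\mathcal{D} \in \mathcal{T}$ is a meaningful statement; that membership is exactly the hypothesis.)

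For the converse, suppose $X \in \mathcal{C} \ast \mathcal{T}$, so there is a short exact sequence $0 \to C \xrightarrow{\iota} X \xrightarrow{\pi} T \to 0$ with $C \in \mathcal{C}$ and $T \in \mathcal{T} \subseteq \mathcal{C}' \cap \mathcal{D} \subseteq \mathcal{D}$. I want to conclude $C \cong {}_\mathcal{C}X$ and $T \cong X_\mathcal{D}$. The key observation is that this sequence has first term in $\mathcal{C}$ and last term in $\mathcal{D}$; since $(\mathcal{C},\mathcal{D})$ is a torsion pair in the abelian category $\mathcal{A}$, the $(\mathcal{C},\mathcal{D})$-canonical short exact sequence of $X$ is unique up to isomorphism (this is the Corollary following Proposition \ref{adfunk}, applied in the abelian — hence pre-abelian — setting). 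Therefore the given sequence is isomorphic to $0 \to {}_\mathcal{C}X \to X \to X_\mathcal{D} \to 0$, whence $X_\mathcal{D} \cong T \in \mathcal{T}$, and $\mathcal{T}$ being closed under isomorphism (it is a full, replete subcategory) gives $X_\mathcal{D} \in \mathcal{T}$. This same isomorphism of sequences is precisely the "in particular" statement.

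I do not expect a genuine obstacle here: the lemma is essentially a bookkeeping consequence of the defining property of $\ast$, the fact that $\mathcal{T}$ sits inside $\mathcal{D}$ (which forces the $\mathcal{T}$-part of any $\mathcal{C}\ast\mathcal{T}$-presentation to be torsionfree for $(\mathcal{C},\mathcal{D})$), and the uniqueness of torsion canonical sequences. The only point requiring a modicum of care is to invoke uniqueness correctly: one must check that $0 \to C \to X \to T \to 0$ really is \emph{a} $(\mathcal{C},\mathcal{D})$-canonical sequence of $X$, i.e. that having the right end terms in the right classes is enough — which is exactly the content of (T1)+(T2) plus the uniqueness corollary. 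This is why the statement is flagged as "straightforward and left to the reader."
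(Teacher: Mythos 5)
Your argument is correct and is precisely the straightforward argument the paper intends by leaving the proof to the reader: the canonical $(\mathcal{C},\mathcal{D})$-sequence gives the easy direction, and for the converse any sequence $0\to C\to X\to T\to 0$ with $C\in\mathcal{C}$ and $T\in\mathcal{T}\subseteq\mathcal{D}$ is itself a $(\mathcal{C},\mathcal{D})$-canonical sequence of $X$, hence isomorphic to it by the uniqueness corollary, which also yields the ``in particular'' clause. This matches how the paper uses the same uniqueness principle elsewhere (e.g.\ in the proof of Lemma \ref{radicalswitch}), so there is nothing to add.
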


The following Lemma shows that the hearts of twin torsion pairs are preserved under the bijection of Theorem \ref{torsbij1}.

\begin{lem} \label{reflecthearts}
Let $\mathcal{A}$ be an abelian category, $[(\mathcal{C}, \mathcal{D})$, $(\mathcal{C}', \mathcal{D}')]$  twin torsion pairs in $\mathcal{A}$ and let $[(\mathcal{T}, \mathcal{F})$, $(\mathcal{T}', \mathcal{F}')]$ be twin torsion pairs in $\mathcal{C}' \cap \mathcal{D}$. Then \[ \mathcal{T}' \cap \mathcal{F} = (\mathcal{C} \ast \mathcal{T}') \cap (\mathcal{F} \ast \mathcal{D}'). \] 
\end{lem}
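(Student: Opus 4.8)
The plan is to prove the two inclusions separately, using Lemma \ref{starstar} as the main bridge between "being in a $\ast$-category computed in $\mathcal{A}$" and "the $(\mathcal{C},\mathcal{D})$-truncation landing in the relevant torsion class of $\mathcal{C}'\cap\mathcal{D}$." Throughout, write $\mathcal{H}=\mathcal{C}'\cap\mathcal{D}$ and recall from Theorem \ref{torsbij1} that $(\mathcal{C}\ast\mathcal{T}',\ \mathcal{F}'\ast\mathcal{D}')$ and $(\mathcal{C}\ast\mathcal{T},\ \mathcal{F}\ast\mathcal{D}')$ are the torsion pairs in $\mathcal{A}$ corresponding to $(\mathcal{T}',\mathcal{F}')$ and $(\mathcal{T},\mathcal{F})$ in $\mathcal{H}$; since $\mathcal{T}\subseteq\mathcal{T}'$ (so $\mathcal{F}'\subseteq\mathcal{F}$), these two torsion pairs in $\mathcal{A}$ are themselves twin, with heart $(\mathcal{C}\ast\mathcal{T}')\cap(\mathcal{F}\ast\mathcal{D}')$ by Theorem \ref{qab}. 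So the statement is precisely that the bijection of Theorem \ref{torsbij1} carries the heart $\mathcal{T}'\cap\mathcal{F}$ (computed inside $\mathcal{H}$) to the heart $(\mathcal{C}\ast\mathcal{T}')\cap(\mathcal{F}\ast\mathcal{D}')$ (computed inside $\mathcal{A}$).

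For the inclusion $\mathcal{T}'\cap\mathcal{F}\subseteq(\mathcal{C}\ast\mathcal{T}')\cap(\mathcal{F}\ast\mathcal{D}')$: let $X\in\mathcal{T}'\cap\mathcal{F}$. Since $X\in\mathcal{T}'$ and $\mathcal{T}'\subseteq\mathcal{C}\ast\mathcal{T}'$ trivially (as $0\in\mathcal{C}$), we get $X\in\mathcal{C}\ast\mathcal{T}'$. Since $X\in\mathcal{F}$ and $\mathcal{F}\subseteq\mathcal{F}\ast\mathcal{D}'$ trivially (as $0\in\mathcal{D}'$), we get $X\in\mathcal{F}\ast\mathcal{D}'$. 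Hence $X\in(\mathcal{C}\ast\mathcal{T}')\cap(\mathcal{F}\ast\mathcal{D}')$. This direction is immediate and uses nothing beyond the remark following the definition of $\mathcal{X}\ast\mathcal{Y}$.

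For the reverse inclusion, let $X\in(\mathcal{C}\ast\mathcal{T}')\cap(\mathcal{F}\ast\mathcal{D}')$; I must show $X\in\mathcal{H}$ and that $X\in\mathcal{T}'$ and $X\in\mathcal{F}$. First, $X\in\mathcal{C}\ast\mathcal{T}'\subseteq\mathcal{C}'$ (as $\mathcal{C}'$ is extension-closed and contains $\mathcal{C}$ and $\mathcal{T}'$), and dually $X\in\mathcal{F}\ast\mathcal{D}'\subseteq\mathcal{D}$, so $X\in\mathcal{H}$. Now, since $X\in\mathcal{H}$ we have $X_\mathcal{D}\cong X$, so Lemma \ref{starstar} applied to the torsion class $\mathcal{T}'$ of $\mathcal{H}$ gives: $X\in\mathcal{C}\ast\mathcal{T}'$ iff $X_\mathcal{D}\cong X\in\mathcal{T}'$; hence $X\in\mathcal{T}'$. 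Dually — applying the version of Lemma \ref{starstar} for the torsionfree class $\mathcal{F}$ (i.e. $X\in\mathcal{F}\ast\mathcal{D}'$ iff $\leftidx{_{\mathcal{C}'}}{X}\cong X\in\mathcal{F}$, using $X\in\mathcal{C}'$) — we get $X\in\mathcal{F}$. Therefore $X\in\mathcal{T}'\cap\mathcal{F}$, completing the proof.

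The only point requiring a little care — and the step I'd flag as the main obstacle — is the dual of Lemma \ref{starstar}: the excerpt states Lemma \ref{starstar} only for a torsion class $\mathcal{T}$ and the equivalence "$X\in\mathcal{C}\ast\mathcal{T}\iff X_\mathcal{D}\in\mathcal{T}$." One needs the mirror statement that for a torsionfree class $\mathcal{G}$ of $\mathcal{H}$ one has $X\in\mathcal{G}\ast\mathcal{D}'\iff \leftidx{_{\mathcal{C}'}}{X}\in\mathcal{G}$, which follows by the evident dualisation of the proof of Lemma \ref{starstar} (swapping the roles of the two twin torsion pairs and of sub/quotient). If one prefers to avoid invoking a dual, the inclusion $X\in\mathcal{F}$ can instead be obtained directly: from $X\in\mathcal{F}\ast\mathcal{D}'$ pick a short exact sequence $0\to F\to X\to D\to 0$ with $F\in\mathcal{F}\subseteq\mathcal{D}$ and $D\in\mathcal{D}'$; since $X\in\mathcal{D}$ and $\mathcal{D}$ is closed under quotients, $D\in\mathcal{D}\cap\mathcal{D}'=\mathcal{D}'$, but also $D$ is a quotient of $X\in\mathcal{D}$ by something in $\mathcal{D}\subseteq$ ... — more cleanly, $\Hom_\mathcal{A}(D',X)$ considerations together with $\mathcal{D}\cap\mathcal{D}'=0$ force $D=0$, giving $X\cong F\in\mathcal{F}$. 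Either way the argument is short once Lemma \ref{starstar} (or its dual) is in hand.
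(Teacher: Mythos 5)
Your main argument is correct, and it rests on the same key tool as the paper's proof, namely Lemma \ref{starstar} and its dual (which the paper also invokes without proof, so relying on the dual is not a problem). The organisation of the hard inclusion differs, though. You first show $X\in\mathcal{C}'\cap\mathcal{D}$ directly, via $\mathcal{C}\ast\mathcal{T}'\subseteq\mathcal{C}'$ and $\mathcal{F}\ast\mathcal{D}'\subseteq\mathcal{D}$ (extension-closure), so that ${}_{\mathcal{C}}X=0$, $X\cong X_{\mathcal{D}}$ and $X_{\mathcal{D}'}=0$, $X\cong{}_{\mathcal{C}'}X$, and then Lemma \ref{starstar} and its dual immediately give $X\in\mathcal{T}'$ and $X\in\mathcal{F}$. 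The paper instead compares the $(\mathcal{C},\mathcal{D})$- and $(\mathcal{C}',\mathcal{D}')$-canonical sequences of $X$ in a commutative diagram: Lemma \ref{starstar} and its dual place $X_{\mathcal{D}}$ in $\mathcal{T}'$ and ${}_{\mathcal{C}'}X$ in $\mathcal{F}$, hence both in $\mathcal{C}'\cap\mathcal{D}$, and then $\Hom_{\mathcal{A}}(\mathcal{C},\mathcal{D})=0$ kills the comparison map ${}_{\mathcal{C}}X\to{}_{\mathcal{C}'}X$, yielding ${}_{\mathcal{C}}X=0$ and $X\cong X_{\mathcal{D}}\in\mathcal{T}'$, and similarly $X\cong{}_{\mathcal{C}'}X\in\mathcal{F}$. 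Your version is a mild simplification: establishing membership in the heart first replaces the diagram chase, and in fact once $X\in\mathcal{C}'\cap\mathcal{D}$ is known you could even bypass Lemma \ref{starstar} by quoting the identities $(\mathcal{C}\ast\mathcal{T}')\cap\mathcal{D}=\mathcal{T}'$ and $(\mathcal{F}\ast\mathcal{D}')\cap\mathcal{C}'=\mathcal{F}$ proved inside Theorem \ref{torsbij1}.

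One defect, confined to your optional fallback for avoiding the dual of Lemma \ref{starstar}: the claim $\mathcal{D}\cap\mathcal{D}'=0$ is false, since $\mathcal{D}'\subseteq\mathcal{D}$ gives $\mathcal{D}\cap\mathcal{D}'=\mathcal{D}'$. The correct direct argument is: given $0\to F\to X\to D\to 0$ with $F\in\mathcal{F}$ and $D\in\mathcal{D}'$, the epimorphism $X\to D$ vanishes because $X\in\mathcal{C}'$ and $\Hom_{\mathcal{A}}(\mathcal{C}',\mathcal{D}')=0$ (equivalently, $D$ is a quotient of $X\in\mathcal{C}'$, so $D\in\mathcal{C}'\cap\mathcal{D}'=0$), whence $D=0$ and $X\cong F\in\mathcal{F}$. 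Since this sketch is explicitly a backup to a correct main argument, the proof as a whole stands.
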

\begin{proof}
Let $X \in  (\mathcal{C} \ast \mathcal{T}') \cap (\mathcal{F} \ast \mathcal{D}')$ and consider the commutative diagram \[ \begin{tikzcd} 0 \arrow[r] & _\mathcal{C}X \arrow[r, "i"] \arrow[d, "f"] & X \arrow[r] \arrow[d, "1"] & \arrow[r] \arrow[d, "g"] X_\mathcal{D} & 0 \\ 0 \arrow[r] & _{\mathcal{C}'}X \arrow[r] & X \arrow[r, "q"] & X_{\mathcal{D}'} \arrow[r] & 0 \end{tikzcd} \] with top (resp. bottom) row being the $(\mathcal{C}, \mathcal{D})$-canonical (resp.  $(\mathcal{C}', \mathcal{D}')$-canonical)  short exact sequences of $X$. The existence of the vertical maps $f$ and $g$ follows from the fact that $\mathcal{C} \subseteq \mathcal{C}'$. Moreover, it follows from Lemma \ref{starstar} and its dual that ${}_{\mathcal{C}'}X \in \mathcal{F}$ and $X_\mathcal{D} \in \mathcal{T}$; in particular, ${}_{\mathcal{C}'}X, X_\mathcal{D} \in \mathcal{C}' \cap \mathcal{D}$. Thus, as  $\Hom_\mathcal{A}(\mathcal{C}, \mathcal{D})=0$, $f=0$ and we deduce that $X_\mathcal{C} \cong 0$ and $X \cong X_\mathcal{D}\in \mathcal{T}'$. Similarly, $X \cong {}_{\mathcal{C}'}X \in  \mathcal{F}$ and we have $X \in \mathcal{T}' \cap \mathcal{F}$. The reverse inclusion is trivial.  \end{proof}

\subsection{Functorial finiteness}

In this section, we investigate how the bijection in Theorem \ref{torsbij1} reflects the functorially finite property of torsion(free) classes. Recall that a torsion pair $(\mathcal{T}, \mathcal{F})$ \textit{functorially finite} if both $\mathcal{T}$ and $\mathcal{F}$ are functorially finite subcategories. In the following result, we see how functorially finite torsion pairs behave under the bijection of Theorem \ref{torsbij1}. The proof of part (b) was privately communicated by Gustavo Jasso who  used a similar argument in his work on $\tau$-tilting reduction \cite[Theorem 3.13]{jasso2014reduction}.

\begin{prop} \label{fftorsbij} Let $\mathcal{A}$ be an abelian category and let $[(\mathcal{C}, \mathcal{D})$, $(\mathcal{C}', \mathcal{D}')]$ be twin torsion pairs in $\mathcal{A}$.
\be 
\item If $(\mathcal{X}, \mathcal{Y})$ is a functorially finite torsion pair in $\mathcal{A}$ such that $\mathcal{C}\subseteq \mathcal{X} \subseteq \mathcal{C}'$, then $(\mathcal{X}\cap \mathcal{D}, \mathcal{Y} \cap \mathcal{C'})$ is a functorially finite torsion pair in $\mathcal{C}' \cap \mathcal{D}$.
\item Suppose that $\mathcal{A}$ has enough projectives and injectives and that $(\mathcal{C}, \mathcal{D})$ and $(\mathcal{C}', \mathcal{D}')$ are  functorially finite as torsion pairs in $\mathcal{A}$. Suppose $(\mathcal{T}, \mathcal{F})$ is a functorially finite torsion pair in $\mathcal{C}' \cap \mathcal{D}$, then $(\mathcal{C}\ast \mathcal{T}, \mathcal{F} \ast \mathcal{D}')$ is a functorially finite torsion pair in $\mathcal{A}$.
\ee
\end{prop}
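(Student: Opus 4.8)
The plan is to prove each part by explicitly constructing approximations. For part (a), suppose $(\mathcal{X}, \mathcal{Y})$ is a functorially finite torsion pair in $\mathcal{A}$ with $\mathcal{C} \subseteq \mathcal{X} \subseteq \mathcal{C}'$. By Theorem \ref{torsbij1} we already know $(\mathcal{X} \cap \mathcal{D}, \mathcal{Y} \cap \mathcal{C}')$ is a torsion pair in $\mathcal{C}' \cap \mathcal{D}$, so by Remark \ref{torscocov} we need only check that $\mathcal{X} \cap \mathcal{D}$ is covariantly finite and $\mathcal{Y} \cap \mathcal{C}'$ is contravariantly finite in $\mathcal{C}' \cap \mathcal{D}$. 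First I would take $M \in \mathcal{C}' \cap \mathcal{D}$ and a left $\mathcal{X}$-approximation $M \to X$ in $\mathcal{A}$; since $\mathcal{C}'$ is closed under quotients, the image of this map lies in $\mathcal{C}'$, and since $\mathcal{X} \cap \mathcal{D} \subseteq \mathcal{C}' \cap \mathcal{D}$, applying the functor $(-)_{\mathcal{D}}$ (which, by Lemma \ref{preab} and Proposition \ref{exact c'nd}, computes cokernels of monos inside $\mathcal{C}' \cap \mathcal{D}$) should produce the desired left $\mathcal{X} \cap \mathcal{D}$-approximation. The key point is that any morphism $M \to N$ with $N \in \mathcal{X} \cap \mathcal{D}$ factors through $X$ in $\mathcal{A}$, and then the factoring map $X \to N$ kills the $\mathcal{C}$-part of $X$ (as $\Hom_\mathcal{A}(\mathcal{C}, \mathcal{D}) = 0$ and $N \in \mathcal{D}$), so it factors through $X_\mathcal{D} \in \mathcal{X} \cap \mathcal{D}$; one then checks compatibility. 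The contravariant half is dual, using that $\mathcal{D}$ is closed under subobjects and the functor ${}_{\mathcal{C}'}(-)$.

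For part (b), suppose $\mathcal{A}$ has enough projectives and injectives, $(\mathcal{C}, \mathcal{D})$ and $(\mathcal{C}', \mathcal{D}')$ are functorially finite in $\mathcal{A}$, and $(\mathcal{T}, \mathcal{F})$ is functorially finite in $\mathcal{C}' \cap \mathcal{D}$. Again by Theorem \ref{torsbij1} and Remark \ref{torscocov} it suffices to show $\mathcal{C} \ast \mathcal{T}$ is covariantly finite and $\mathcal{F} \ast \mathcal{D}'$ is contravariantly finite in $\mathcal{A}$; by duality (swapping the roles of the two torsion pairs and passing to the opposite category) it is enough to handle contravariant finiteness of $\mathcal{F} \ast \mathcal{D}'$. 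Given $M \in \mathcal{A}$, the strategy is a three-step approximation mirroring the construction of the $(\mathcal{T}, \mathcal{F})$-canonical sequence in the proof of Theorem \ref{torsbij1}: first take the right $\mathcal{D}'$-approximation of $M$ (which exists since $(\mathcal{C}', \mathcal{D}')$ is functorially finite, so $\mathcal{D}'$ is contravariantly finite), then on the resulting object take a right $\mathcal{C}' \cap \mathcal{D}$-part via the right $\mathcal{C}'$-approximation followed by the torsionfree quotient $(-)_\mathcal{D}$, then apply the right $\mathcal{F}$-approximation inside $\mathcal{C}' \cap \mathcal{D}$, and finally glue these together using pullbacks/pushouts as in that proof to assemble a map $F' \to M$ with $F' \in \mathcal{F} \ast \mathcal{D}'$. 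Here the hypotheses that $\mathcal{A}$ has enough projectives and injectives, and that $(\mathcal{C}, \mathcal{D}), (\mathcal{C}', \mathcal{D}')$ are functorially finite, are exactly what guarantee the intermediate subcategories ($\mathcal{C}'$, $\mathcal{D}$, and hence $\mathcal{C}' \cap \mathcal{D}$ with its induced approximations) are functorially finite so that each step is available. I expect the main obstacle to be verifying that the glued morphism genuinely is a right $\mathcal{F} \ast \mathcal{D}'$-approximation: one must show that an arbitrary morphism $N \to M$ with $N \in \mathcal{F} \ast \mathcal{D}'$ factors through it, which requires carefully chasing $N$'s defining extension $0 \to N' \to N \to N'' \to 0$ (with $N' \in \mathcal{F}$, $N'' \in \mathcal{D}'$) through all three stages of the construction and invoking the vanishing $\Hom_\mathcal{A}(\mathcal{C}, \mathcal{D}') = 0$, $\Hom_\mathcal{A}(\mathcal{C}, \mathcal{F}) = 0$, and $\Hom_{\mathcal{C}' \cap \mathcal{D}}(\mathcal{T}, \mathcal{F}) = 0$ at the appropriate points; tracking the Snake Lemma connecting maps and the universal properties of the successive pullbacks is where the bookkeeping is heaviest.
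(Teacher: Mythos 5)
Your part (a) is essentially the paper's own argument: compose a left $\mathcal{X}$-approximation $\beta\colon M\to X$ with the canonical quotient $X\to X_\mathcal{D}$ (which lies in $\mathcal{X}\cap\mathcal{D}$ since $\mathcal{X}$ is closed under quotients) and use $\Hom_\mathcal{A}(\mathcal{C},\mathcal{D})=0$ to factor any map to $\mathcal{X}\cap\mathcal{D}$ through it; that half, and your duality reduction in (b) to the contravariant finiteness of $\mathcal{F}\ast\mathcal{D}'$, are fine.

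Part (b), however, has a genuine gap at exactly the step you defer. The pushout/pullback gluing in the proof of Theorem \ref{torsbij1} constructs the $(\mathcal{C}\ast\mathcal{T},\mathcal{F}\ast\mathcal{D}')$-canonical short exact sequence of $M$; the morphisms it yields (a subobject in $\mathcal{C}\ast\mathcal{T}$ and a quotient in $\mathcal{F}\ast\mathcal{D}'$) are precisely the approximations that Remark \ref{torscocov} already gives for free, not the right $\mathcal{F}\ast\mathcal{D}'$-approximation you must produce. Moreover, your three-step recipe does not compose toward $M$: after a right $\mathcal{D}'$-approximation $D_0\to M$ and a right $\mathcal{C}'$-approximation ${}_{\mathcal{C}'}D_0\to D_0$, applying the quotient functor $(-)_\mathcal{D}$ produces an object with no induced morphism to $D_0$ or $M$, and a naive candidate such as $F_0\oplus D_0\to M$ fails because a map $N\to M$ from an extension $0\to F_N\to N\to D_N\to 0$ does not decompose into maps out of $F_N$ and $D_N$. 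Handling arbitrary such extensions is exactly where the hypothesis that $\mathcal{A}$ has enough projectives and injectives is needed --- not, as you suggest, to guarantee functorial finiteness of the intermediate subcategories (that is assumed outright). The paper's proof passes to $\mathsf{D}^{\operatorname{b}}(\mathcal{A})\cong\mathsf{K}^-(\mathsf{proj}\mathcal{A})\cong\mathsf{K}^+(\mathsf{inj}\mathcal{A})$, observes that $\mathcal{A}$ is functorially finite there, and invokes Lemma \ref{io5.33} (and its dual), i.e.\ the triangulated fact that $X\ast Y$ is contravariantly finite when $X$ and $Y$ are, then restricts back to $\mathcal{A}$. A purely abelian argument in the style of Gentle--Todorov is also possible, but it uses projective presentations or injective copresentations to parametrize all extensions with fixed end terms; since your sketch never brings a projective or injective object into the construction, the plan as stated would not go through.
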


For the proof we will need the following result. \begin{lem} \label{io5.33}\cite[Proposition 5.33]{iyama2013stable} Let $T$ be a triangulated category and $X$ and $Y$ be full subcategories of T . If
$X$ and $Y$ are contravariantly finite in $T$, then so is $X \ast Y$. \end{lem}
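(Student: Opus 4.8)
The plan is to prove this triangulated statement directly, by explicitly building a right $\mathcal{X}\ast\mathcal{Y}$-approximation of an arbitrary object through a ``cone-and-pullback'' construction. Throughout, $\mathcal{X}\ast\mathcal{Y}$ is the full subcategory of $T$ of objects $M$ admitting a triangle $X'\to M\to Y'\to X'[1]$ with $X'\in\mathcal{X}$ and $Y'\in\mathcal{Y}$, and ``contravariantly finite'' means every object of $T$ admits a right approximation by the subcategory in question.

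Fix $M\in T$. First I would take a right $\mathcal{X}$-approximation $g\colon X_0\to M$, which exists since $\mathcal{X}$ is contravariantly finite, and complete it to a triangle $X_0\xrightarrow{g}M\xrightarrow{p}C\xrightarrow{q}X_0[1]$. Next, using that $\mathcal{Y}$ is contravariantly finite, choose a right $\mathcal{Y}$-approximation $f\colon Y_0\to C$. Finally, let $E$ be the homotopy pullback of $M\xrightarrow{p}C\xleftarrow{f}Y_0$, that is, the object fitting into a triangle $E\xrightarrow{(\phi,\pi)}M\oplus Y_0\xrightarrow{(p,-f)}C\to E[1]$ with projections $\phi\colon E\to M$ and $\pi\colon E\to Y_0$. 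I claim that $\phi\colon E\to M$ is the required right $\mathcal{X}\ast\mathcal{Y}$-approximation. That $E$ lies in $\mathcal{X}\ast\mathcal{Y}$ comes from the standard fact about homotopy cartesian squares: the fibre of $\pi$ is isomorphic to the fibre of $p$, which is $X_0$ by the first triangle, so there is a triangle $X_0\to E\xrightarrow{\pi}Y_0\xrightarrow{qf}X_0[1]$, exhibiting $E$ as an object of $\mathcal{X}\ast\mathcal{Y}$.

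It then remains to verify the approximation property of $\phi$. Let $Z\in\mathcal{X}\ast\mathcal{Y}$, fix a triangle $X_Z\xrightarrow{u}Z\xrightarrow{v}Y_Z\to X_Z[1]$ with $X_Z\in\mathcal{X}$ and $Y_Z\in\mathcal{Y}$, and let $\psi\colon Z\to M$ be arbitrary. Since $\psi u\colon X_Z\to M$ has source in $\mathcal{X}$, it factors through the right $\mathcal{X}$-approximation $g$; as $pg=0$ this gives $p\psi u=0$, so by the long exact sequence obtained by applying $\Hom_T(-,C)$ to the triangle for $Z$ we may write $p\psi=tv$ for some $t\colon Y_Z\to C$. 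As $Y_Z\in\mathcal{Y}$, the map $t$ factors through $f$, say $t=fr$ with $r\colon Y_Z\to Y_0$, whence $p\psi=f(rv)$. Consequently $(\psi,rv)\colon Z\to M\oplus Y_0$ is annihilated by $(p,-f)$, so it factors through $E\xrightarrow{(\phi,\pi)}M\oplus Y_0$ by the defining triangle of $E$, giving $\theta\colon Z\to E$ with $\phi\theta=\psi$. Hence every morphism from an object of $\mathcal{X}\ast\mathcal{Y}$ to $M$ factors through $\phi$, proving $\mathcal{X}\ast\mathcal{Y}$ contravariantly finite; the dual statement follows by working in $T^{\mathrm{op}}$.

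The step I expect to be the main obstacle is the identification in the second paragraph: extracting from the homotopy pullback the triangle $X_0\to E\to Y_0\xrightarrow{qf}X_0[1]$ with the correct connecting morphism. This is the standard lemma that in a homotopy cartesian square the two vertical fibres (and cones) agree, but it must be invoked with some care, or reproved via the octahedral axiom. As an alternative route that avoids homotopy pullbacks, one may instead complete $qf\colon Y_0\to X_0[1]$ directly to a triangle $X_0\xrightarrow{\alpha}E\to Y_0\xrightarrow{qf}X_0[1]$; since $g[1]\circ q=0$ one obtains $g\circ(qf)[-1]=0$, so $g$ factors as $g=\phi\alpha$ for some $\phi\colon E\to M$, and the factorisation argument of the previous paragraph then shows $\phi$ is a right $\mathcal{X}\ast\mathcal{Y}$-approximation. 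Either way, the proof reduces to the two approximation hypotheses together with a single exactness argument.
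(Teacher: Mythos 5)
Your argument is correct, but note that the paper never proves this lemma at all: it is imported verbatim as a citation to \cite[Proposition 5.33]{iyama2013stable} and used as a black box in the proof of Proposition \ref{fftorsbij}(b). What you have written is a complete, self-contained proof of the cited fact, and it is essentially the standard one: take a right $\mathcal{X}$-approximation $g\colon X_0\to M$, a right $\mathcal{Y}$-approximation $f\colon Y_0\to \mathrm{cone}(g)$, and pass to the homotopy pullback $E$ of $M\xrightarrow{p}C\xleftarrow{f}Y_0$; the defining triangle $E\to M\oplus Y_0\xrightarrow{(p,-f)}C$ is exactly what makes the factorisation step ($\psi u=gs$ gives $p\psi u=0$, hence $p\psi=tv=frv$, hence $(\psi,rv)$ lifts to $E$) go through, and the homotopy-cartesian lemma (or an octahedron) gives the triangle $X_0\to E\to Y_0$ showing $E\in\mathcal{X}\ast\mathcal{Y}$. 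One caution about your closing ``alternative route'': completing $qf$ to a triangle $X_0\xrightarrow{\alpha}E\to Y_0\xrightarrow{qf}X_0[1]$ and choosing an \emph{arbitrary} $\phi$ with $\phi\alpha=g$ is not enough to rerun the factorisation argument verbatim, since that argument uses the triangle $E\to M\oplus Y_0\to C$ (equivalently, that the square is homotopy cartesian), which only holds for a suitable choice of $\phi$ produced by the octahedral axiom; so the homotopy-pullback formulation is the one to keep, or else the compatibility $p\phi=f\beta$ plus homotopy-cartesianness must be arranged explicitly. With that caveat, your main proof is complete and could stand in place of the citation; what the citation buys the paper is merely brevity, while your argument makes the section self-contained at the cost of invoking the standard homotopy-cartesian machinery.
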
 

\begin{proof}[Proof of Proposition \ref{fftorsbij}.] (a) Let $(\mathcal{X}, \mathcal{Y})$ be a functorially finite torsion pair in $\mathcal{A}$. In light of Remark \ref{torscocov} we only need to check that $\mathcal{X}\cap \mathcal{D}$ is covariantly finite in $\mathcal{C}' \cap \mathcal{D}$ and that $\mathcal{Y} \cap \mathcal{C}'$ is contravariantly finite in $\mathcal{C}' \cap \mathcal{D}$. We will show the first property, the second will follow by a dual argument.  
Let $M \in \mathcal{C}' \cap \mathcal{D}$ and let $\beta: M \rightarrow X$ be a left $\mathcal{X}$-approximation of $M$, which exists as $M \in \mathcal{A}$. Consider the canonical $(\mathcal{C}, \mathcal{D})$-short exact sequence of $X$
\[ \begin{tikzcd} 0 \arrow[r] & \leftidx{_\mathcal{C}}{X} \arrow[r, "f"] & X \arrow[r, "g"] &  X_\mathcal{D} \arrow[r] & 0. \end{tikzcd} \] Observe that, as $\mathcal{X}$ is closed under factor objects,  $X_\mathcal{D} \in \mathcal{X}$ and therefore $X_\mathcal{D} \in \mathcal{X} \cap \mathcal{D}$. We claim that $g\beta: M \rightarrow X_\mathcal{D}$ is a left $\mathcal{X}\cap \mathcal{D}$-approximation of $M$ in $\mathcal{C}' \cap \mathcal{D}$. 
Indeed, let $r:M \rightarrow X'$ be a morphism with $X' \in \mathcal{X} \cap \mathcal{D}$ then, as $X' \in \mathcal{X}$ and $g$ is a left $\mathcal{X}$-approximation of $M$, there exists a morphism $\gamma: X \rightarrow X'$ such that $\gamma \beta = r$. Now, as $X' \in \mathcal{D}$, $(\gamma f : \leftidx{_\mathcal{C}}{X} \rightarrow X' )= 0$ and as $g =\Coker f$, there exists a morphism $\delta: X_\mathcal{D} \rightarrow X'$ such that $\delta g = \gamma$. Together we have $r= \gamma \beta = \delta (g \beta)$ and thus $r$ factors through $g \beta$ as required.

(b) Suppose that $(\mathcal{C}, \mathcal{D})$ and $(\mathcal{C}', \mathcal{D}')$ are functorially finite torsion pairs in $\mathcal{A}$ and let $(\mathcal{T}, \mathcal{F})$ be a functorially finite torsion pair in $\mathcal{C}'\cap \mathcal{D}$. We claim that $(\mathcal{C}\ast \mathcal{T}, \mathcal{F}\ast \mathcal{D}')$ is a functorially finite torsion pair in $\mathcal{A}$. By Remark \ref{torscocov}, we only need to show that $\mathcal{C}\ast \mathcal{T}$ (resp. $\mathcal{F} \ast \mathcal{D}'$) is covariantly (resp. contravariantly) finite in $\mathcal{A}$. Both facts follow from  Lemma \ref{io5.33} and its dual by using the equivalences $\mathsf{D}^{\operatorname{b}}(\mathcal{A}) \cong \mathsf{K}^-(\mathsf{proj}\mathcal{A})$ and $\mathsf{D}^{\operatorname{b}}(\mathcal{A}) \cong \mathsf{K}^+(\mathsf{inj}\mathcal{A})$ respectively which hold as $\mathcal{A}$ has enough projectives (resp. injectives) together with the observation that, in this case, $\mathcal{A}$ is a functorially finite subcategory of $\mathsf{D}^{\operatorname{b}} (\mathcal{A})$.  \end{proof}  




\section{Torsion pairs in quasi-abelian categories} \label{section:torsqab}

The aim of this section is to characterise torsion pairs in quasi-abelian categories. For a torsionfree class $\mathcal{F}$ of an abelian category $\mathcal{A}$ (which are quasi-abelian as we noted in Section 3) we have already done this in the previous sections: By taking the twin torsion pairs $[(\mathcal{T}, \mathcal{F}), (\mathcal{A}, 0)]$, Theorem \ref{torsbij1} tells us that torsionfree classes in $\mathcal{F}$ are precisely torsionfree classes of $\mathcal{A}$ that lie in $\mathcal{F}$ with corresponding torsion classes obtained by intersecting with $\mathcal{F}$. Using a result of Bondal \& van den Bergh and Rump, we may do this for all quasi-abelian categories.

\begin{lem}\label{QinL}  \cite[Proposition B.3]{bondal2003generators} \cite[Theorem 2]{rump2001almost} Let $\mathcal{Q}$ be a quasi-abelian category. Then $\mathcal{Q}$ is a torsionfree class in an abelian category $\mathcal{L}_\mathcal{Q}$. Moreover, $\mathcal{Q}$ is a cotilting torsionfree class in $\mathcal{L}_\mathcal{Q}$, that is, every object in $ \mathcal{L}_\mathcal{Q}$ is a quotient of an object in $\mathcal{Q}$. 
\end{lem}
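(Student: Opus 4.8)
The plan is to recall the construction of $\mathcal{L}_\mathcal{Q}$ explicitly and verify the two asserted properties directly. The category $\mathcal{L}_\mathcal{Q}$ is built as a localisation of the category of morphisms (the ``arrow category'') of $\mathcal{Q}$: concretely, one takes the category $\mathsf{Mor}(\mathcal{Q})$ whose objects are morphisms $f: Q_1 \to Q_0$ in $\mathcal{Q}$, and inverts those morphisms of $\mathsf{Mor}(\mathcal{Q})$ that are ``quasi-isomorphisms'' in the appropriate sense (equivalently, one realises $\mathcal{L}_\mathcal{Q}$ as the full subcategory of the two-term complexes in $\mathsf{D}^{\mathrm b}(\mathcal{Q})$ concentrated in degrees $0$ and $-1$ with monic differential). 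The key input is Schneiders' theorem that this $\mathcal{L}_\mathcal{Q}$ is abelian and that the functor $\mathcal{Q} \hookrightarrow \mathcal{L}_\mathcal{Q}$ sending $Q$ to the object $(0 \to Q)$ is fully faithful, exact, and reflects exactness. I would first state this as the definition/citation, fixing notation for the embedding.

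Next I would identify $\mathcal{Q}$ inside $\mathcal{L}_\mathcal{Q}$ as a torsionfree class. The candidate torsion pair is $(\mathcal{T}_\mathcal{Q}, \mathcal{Q})$ where $\mathcal{T}_\mathcal{Q}$ consists of the objects of the form $(Q \xrightarrow{\mathrm{id}} Q)$, i.e. the ``acyclic in degree $-1$'' objects, which become zero-like; more precisely $\mathcal{T}_\mathcal{Q}$ is the essential image of the functor $Q \mapsto (Q \xrightarrow{1} Q)$, but viewed in $\mathcal{L}_\mathcal{Q}$ these are the objects on which the canonical ``$H^0$'' functor $\mathcal{L}_\mathcal{Q} \to \mathcal{Q}$ vanishes. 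The steps are: (i) show $\mathsf{Hom}_{\mathcal{L}_\mathcal{Q}}(\mathcal{T}_\mathcal{Q}, \mathcal{Q}) = 0$, which is immediate from the Hom-computation in the arrow/complex description since a chain map from $(Q \to Q)$ to $(0 \to Q')$ is null-homotopic; (ii) for an arbitrary object $X = (Q_1 \xrightarrow{f} Q_0)$ of $\mathcal{L}_\mathcal{Q}$, produce the canonical short exact sequence $0 \to (Q_1 \xrightarrow{1} Q_1) \to (Q_1 \xrightarrow{f} Q_0) \to (0 \to \mathrm{coker}\, f) \to 0$ — wait, more carefully, since in $\mathcal{L}_\mathcal{Q}$ every object is represented with $f$ monic, the canonical sequence is $0 \to (Q_1 \xrightarrow{f} Q_0)' \to (0 \to Q_0) \to (0 \to Q_0/Q_1) \to 0$ with the sub being in $\mathcal{T}_\mathcal{Q}$ up to the localisation and the relevant torsionfree quotient landing in $\mathcal{Q}$; I would work out the correct arrangement from Schneiders' description so that the sub-object lies in $\mathcal{T}_\mathcal{Q}$ and the quotient lies in the image of $\mathcal{Q}$. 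By Proposition \ref{adfunk} (or directly), this exhibits $\mathcal{Q}$ as a torsionfree class.

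Finally, for the cotilting (``generating'') statement, I must show every object of $\mathcal{L}_\mathcal{Q}$ is a quotient of an object of $\mathcal{Q}$. Given $X = (Q_1 \xrightarrow{f} Q_0)$ with $f$ monic (the standard form), the morphism $(0 \to Q_0) \twoheadrightarrow X$ induced by $\mathrm{id}_{Q_0}$ is an epimorphism in $\mathcal{L}_\mathcal{Q}$ with source in $\mathcal{Q}$; checking it is epic amounts to checking its cokernel is $0$, which in the two-term-complex model is the statement that the cone is acyclic in degree $0$, and this is where the concrete description of cokernels in $\mathcal{L}_\mathcal{Q}$ is used. The main obstacle, and the only real work, is setting up enough of Schneiders' construction (the description of $\mathsf{Hom}$, of kernels and cokernels, and of the embedding $\mathcal{Q} \hookrightarrow \mathcal{L}_\mathcal{Q}$) to make steps (i)--(ii) and the epimorphism check genuinely routine rather than hand-waved; once that scaffolding is in place, each verification is a short diagram chase. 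Since this is essentially a recollection of \cite{bondal2003generators} and \cite{rump2001almost}, I expect the write-up to consist largely of citing those sources for the construction and then recording the torsionfree-class and cotilting properties with brief justifications.
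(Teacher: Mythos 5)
The paper offers no proof of this lemma --- it is a pure citation of Bondal--Van den Bergh and Rump --- so reconstructing the argument from Schneiders' model of $\mathcal{L}_\mathcal{Q}$ is a legitimate plan, and your cotilting step is sound: the morphism $(0\to Q_0)\to(f\colon Q_1\to Q_0)$ given by $\mathrm{id}_{Q_0}$ has null-homotopic cokernel, which is exactly the computation the paper carries out inside the proof of Corollary \ref{projgen}. The genuine gap is in the torsion-pair step. Your candidate torsion class, the essential image of $Q\mapsto (Q\xrightarrow{1}Q)$, is the zero subcategory of $\mathcal{L}_\mathcal{Q}$: for such a complex the identity morphism is null-homotopic (take $h=1_Q$ in the sense of Remark \ref{rem}), so every object $(Q\xrightarrow{1}Q)$ is already isomorphic to $0$ in $\mathsf{K}(\mathcal{Q})$. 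Consequently your proposed canonical sequence $0\to(Q_1\xrightarrow{1}Q_1)\to(Q_1\xrightarrow{f}Q_0)\to(0\to\Coker f)\to 0$ would assert $(f\colon Q_1\to Q_0)\cong(0\to\Coker f)$ for every monomorphism $f$; by Remark \ref{rem} this comparison morphism is regular precisely when the square is exact, i.e.\ when $f$ is a kernel, so the claim fails for any non-strict monomorphism (e.g.\ for $\mathcal{Q}$ the torsionfree abelian groups, $(\mathbb{Z}\xrightarrow{2}\mathbb{Z})$ is a nonzero object of $\mathcal{L}_\mathcal{Q}$, playing the role of $\mathbb{Z}/2$, which lies in neither $\mathcal{Q}$ nor your $\mathcal{T}_\mathcal{Q}$). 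If every monomorphism of $\mathcal{Q}$ were strict, $\mathcal{Q}$ would be abelian and there would be nothing to prove.

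The missing identification, which you defer with ``I would work out the correct arrangement,'' is the actual content: the torsion class ${}^{\perp}\mathcal{Q}$ consists of the objects $(g\colon X\to Y)$ whose (monic) differential $g$ is also an epimorphism of $\mathcal{Q}$, equivalently $\Coker g=0$ --- this agrees with your alternative description ``objects on which $H^0$ vanishes,'' but that class is strictly larger than the image of $Q\mapsto(Q\xrightarrow{1}Q)$, and the two descriptions you treat as synonymous are not. The canonical sequence of $M=(f\colon X\to Y)$ is then $0\to(X\to\Ker(\coker f))\to M\to(0\to\Coker f)\to 0$, where $X\to\Ker(\coker f)$ is monic and epic by the coimage--image factorisation in a quasi-abelian category, and the kernel/cokernel identifications are made with Remark \ref{rem}. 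Two further points need care: morphisms in $\mathcal{L}_\mathcal{Q}$ are fractions over regular morphisms, so vanishing of homotopy classes of squares does not by itself give $\Hom_{\mathcal{L}_\mathcal{Q}}({}^{\perp}\mathcal{Q},\mathcal{Q})=0$ --- you need the calculus of fractions or the adjunction between the embedding and the functor $(g\colon X\to Y)\mapsto\Coker g$; and one must check that $\mathcal{Q}$ is closed under subobjects (or verify $\Hom$-orthogonality on both sides) to conclude it is the torsionfree half. With those repairs your outline becomes the standard argument the paper cites.
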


$\mathcal{L}_{\mathcal{Q}}$ is sometimes referred to as the \textit{(left) associated abelian category of $\mathcal{Q}$}. 

\subsection{The category $\mathcal{L}_\mathcal{Q}$.}
Following \cite{rump2001modules} and originally due to Schneiders \cite[\S 1.2]{schneiders1999quasi}, we give a construction of $\mathcal{L}_\mathcal{Q}$. Then we investigate the conditions on $\mathcal{Q}$ such that $\mathcal{L}_\mathcal{Q}$ is a (small) module category. 

Recall the homotopy category of $\mathcal{Q}$, $\mathsf{K}(\mathcal{Q})$, whose objects are chain complexes of objects of $\mathcal{Q}$ and morphisms are chain complex morphisms modulo homotopy, see \cite[1.2.1]{schneiders1999quasi} for details. Let $\mathcal{X}$ be the subcategory of $\mathsf{K}(\mathcal{Q})$ consisting of complexes concentrated in degrees $0$ and $1$ with the non-trivial differential being an monomorphism. That is, complexes of the form 
$$ \begin{tikzcd} \dots \arrow[r] & 0 \arrow[r] & X^0 \arrow[r, "f"] & X^1 \arrow[r] & 0 \arrow[r] & \dots \end{tikzcd} $$ that are exact in $X^0$. In practice, we identify the above complex with the monomorphism $f$.

\begin{rem} \label{rem} We make some observations.
\be 
\item A morphism,  $(\alpha, \beta): f \to f'$  in $\mathcal{X}$ is just a  commutative square
\begin{equation} \label{morph}
\begin{tikzcd} X \arrow[r, "f"] \arrow[d, "\alpha"] & Y \arrow[d, "\beta"] \\ X' \arrow[r, "f'"] & Y' \end{tikzcd} 
\end{equation} and is null homotopic if there exists $h: Y \to X'$ in $\mathcal{Q}$ such that $hf= \alpha$  and $f'h=\beta$. Observe that, as $f$ is monic, if $f'h=\beta$   then   $hf= \alpha$ is automatically satisfied.
\item \label{Rkercoker}\cite[Proposition 6]{rump2001modules} We may describe kernels and cokernels of a morphism as in (\ref{morph}) explicitly in $\mathcal{X}$. Consider the commutative diagrams in $\mathcal{Q}$
\[ 
\begin{tikzcd} X \arrow[dr, dashed, "\exists ! r"] \arrow[drr, "f", bend left] \arrow[ddr, "\alpha", bend right] & & \\ & A  \arrow[r, "u"] \arrow[d, "v"] \arrow[dr, phantom, "\lrcorner", very near end]  & Y \arrow[d, "\beta"]  & \\ & X'  \arrow[r, "f'"] & Y'  \\ & & &  \end{tikzcd} \hspace{1cm}
\begin{tikzcd} & & & \\ A \arrow[r, "u"] \arrow[d, "v"] & Y \arrow[d, "q"] \arrow[ddr, bend left, "\beta"] & \\ X' \arrow[drr, "f'", bend right] \arrow[r, "p"] & B \arrow[ul, phantom, "\ulcorner", very near end] \arrow[dr, " \exists ! s", dashed] & \\ & & Y' \end{tikzcd}  
\] Then it is easily verified that the morphisms in $\mathcal{X}$ 
\[ \begin{tikzcd} X \arrow[r, "r"] \arrow[d, "1"] & A \arrow[d, "u"] \\ X \arrow[r, "f"] & Y \end{tikzcd} \hspace{1cm} 
\begin{tikzcd} X' \arrow[r, "f'"] \arrow[d, "p"] & Y' \arrow[d, "1"] \\ B \arrow[r, "s"] & Y' \end{tikzcd} 
\] give a kernel and cokernel of (\ref{morph}) in $\mathcal{X}$ respectively. It follows that a morphism as in (\ref{morph}) is monic if and only if it is a pullback and it is regular (that is, both monic and epic) if and only if it is an exact square in $\mathcal{Q}$. Futhermore, we can naturally decompose any morphism as in (\ref{morph}) into a cokernel followed by a regular morphism followed by a kernel: 
\[ 
\begin{tikzcd} X \arrow[r, "f"] \arrow[d, "r"] &
 Y \arrow[d, "1"] \\ A \arrow[r, "u"] \arrow[d, "v"] \arrow[dr, phantom, "\lrcorner", very near end] & Y \arrow[d, "q"] \\ X' \arrow[d, "1"] \arrow[r, "p"] & B \arrow[ul, phantom, "\ulcorner", very near end] \arrow[d, "s"]  \\ X' \arrow[r, "f'"]  & Y'. \end{tikzcd} 
\] 
\ee
\end{rem}

By \cite[Proposition 1, Proposition 3]{rump2001modules}, we may formally invert all regular morphisms to obtain the category $\mathcal{L}_\mathcal{Q}$ which is abelian by \cite[3.2]{freyd1966representations}. 

\begin{rem}
Schneiders originally observed $\mathcal{L}_\mathcal{Q}$ as the heart of a canonical t-structure on the category $\mathsf{K}(\mathcal{Q})$ as part of his work to study the derived category of a quasi-abelian category. We also note that $\mathcal{L}_\mathcal{Q}$ is a special case of \cite[Exemple 1.3.22]{faisceaux} where the authors made the corresponding construction in the more general setting of exact categories. Similar categories have also been considered in other contexts, for example \cite{wegner2017heart}.
\end{rem}


\begin{rem} \label{inclusion}
 There is a canonical inclusion \[ \begin{array}{rcl} \mathcal{Q} & \hookrightarrow & \mathcal{L}_\mathcal{Q} \\ x & \mapsto & (0 \to x) \end{array} \] which is full, faithful, additive, exact and preserves monomorphisms. We implicitly identify $\mathcal{Q}$ with its image in $\mathcal{L}_\mathcal{Q}$. 
\end{rem}

\begin{defn}
An object $P$ in a quasi-abelian category $\mathcal{Q}$ is a \emph{projective} if for all cokernels $f:X \twoheadrightarrow Y$, every morphism $P \to Y$ factors through $f$:
\[\begin{tikzcd} & P \arrow[d, "\forall"] \arrow[dl, dashed, "\exists"'] \\ X \arrow[r, two heads, "f"']& Y. \end{tikzcd} \] 
By $\mathsf{Proj}\mathcal{Q}$ we denote the subcategory of projective objects in $\mathcal{Q}$. An object $P \in \mathsf{Proj}\mathcal{Q}$ is a \emph{strong projective generator} if it is projective and for all $X \in \mathcal{Q}$ there exists a cokernel $P^I \twoheadrightarrow X$ for some set $I$.
\end{defn}

\begin{lem} \label{projLprojQ} \cite[Lemma 4]{rump2001modules}
Let $\mathcal{Q}$ be a quasi-abelian category. Then $\mathsf{Proj}\mathcal{Q} = \mathsf{Proj}\mathcal{L}_\mathcal{Q}$.
\end{lem}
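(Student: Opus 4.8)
\textbf{Proof proposal for Lemma \ref{projLprojQ} ($\mathsf{Proj}\mathcal{Q} = \mathsf{Proj}\mathcal{L}_\mathcal{Q}$).}
The plan is to prove the two inclusions separately, using in both directions the fact (Lemma \ref{QinL}) that $\mathcal{Q}$ sits inside $\mathcal{L}_\mathcal{Q}$ as a cotilting torsionfree class: it is full, faithful, exact, closed under subobjects, and every object of $\mathcal{L}_\mathcal{Q}$ is a quotient (in $\mathcal{L}_\mathcal{Q}$) of an object of $\mathcal{Q}$. Throughout I would use that, since $\mathcal{Q}$ is closed under extensions and subobjects in the abelian category $\mathcal{L}_\mathcal{Q}$, a short exact sequence $0 \to A \to B \to C \to 0$ in $\mathcal{L}_\mathcal{Q}$ with all terms in $\mathcal{Q}$ is precisely a short exact sequence in $\mathcal{Q}$ (cokernels in $\mathcal{Q}$ are the $\mathcal{L}_\mathcal{Q}$-cokernels that happen to land in $\mathcal{Q}$; kernels are literally the same). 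This lets me translate the two notions of ``projective'' into one another.

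First I would show $\mathsf{Proj}\mathcal{L}_\mathcal{Q} \subseteq \mathsf{Proj}\mathcal{Q}$. Let $P \in \mathsf{Proj}\mathcal{L}_\mathcal{Q}$; note first $P \in \mathcal{Q}$, because by the cotilting property $P$ is an $\mathcal{L}_\mathcal{Q}$-quotient of some $Q \in \mathcal{Q}$, and since $P$ is $\mathcal{L}_\mathcal{Q}$-projective that epimorphism splits, so $P$ is a direct summand of $Q$ and hence lies in $\mathcal{Q}$ (torsionfree classes are closed under subobjects, in particular summands). Now given a cokernel $f : X \twoheadrightarrow Y$ in $\mathcal{Q}$ and a morphism $g : P \to Y$ in $\mathcal{Q}$: the sequence $0 \to \Ker f \to X \to Y \to 0$ is exact in $\mathcal{Q}$ hence in $\mathcal{L}_\mathcal{Q}$, so $f$ is an epimorphism in $\mathcal{L}_\mathcal{Q}$; projectivity of $P$ in $\mathcal{L}_\mathcal{Q}$ gives a lift $P \to X$ in $\mathcal{L}_\mathcal{Q}$, which is a morphism in $\mathcal{Q}$ by fullness of the inclusion. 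Hence $P \in \mathsf{Proj}\mathcal{Q}$.

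For the reverse inclusion $\mathsf{Proj}\mathcal{Q} \subseteq \mathsf{Proj}\mathcal{L}_\mathcal{Q}$, let $P \in \mathsf{Proj}\mathcal{Q}$ and take an epimorphism $\pi : M \twoheadrightarrow N$ in $\mathcal{L}_\mathcal{Q}$ together with $g : P \to N$. Using the cotilting property, choose an epimorphism $q : Q \twoheadrightarrow N$ with $Q \in \mathcal{Q}$, and form the pullback $M \times_N Q$; the projection $M \times_N Q \to Q$ is an epimorphism in $\mathcal{L}_\mathcal{Q}$ (pullback of an epi), and since $\mathcal{Q}$ is closed under subobjects, $M \times_N Q$ is... not obviously in $\mathcal{Q}$, so instead I would lift $q$ itself: it suffices to show $P$ is projective relative to epimorphisms \emph{onto objects of} $\mathcal{Q}$ from objects of $\mathcal{Q}$, which is exactly $\mathsf{Proj}\mathcal{Q}$, and then handle the general target by covering $N$ by $q : Q \twoheadrightarrow N$, pulling $g$ back along $q$ — here I instead pull back $\pi$ along $q$ to get $Q' := M\times_N Q \to Q$, replace $Q'$ by a further $\mathcal{Q}$-cover $Q'' \twoheadrightarrow Q'$ with $Q'' \in \mathcal{Q}$ (cotilting again), so $Q'' \to Q$ is an epimorphism in $\mathcal{L}_\mathcal{Q}$ between objects of $\mathcal{Q}$, hence a cokernel in $\mathcal{Q}$; projectivity of $P$ in $\mathcal{Q}$ lifts $g$ (precomposed with a chosen splitting-free argument) through $Q'' \to Q \twoheadrightarrow N$ and then through $Q'' \to Q' \to M$, giving the desired lift $P \to M$. \textbf{The main obstacle} is exactly this last bookkeeping: one must be careful that the maps produced by the cotilting covers really are honest cokernels \emph{in} $\mathcal{Q}$ (so that $\mathcal{Q}$-projectivity applies), and that composing the various pullback and cover maps does produce a genuine lift of the original $g$ along $\pi$; I expect that once the identification ``s.e.s. in $\mathcal{Q}$ $=$ s.e.s. in $\mathcal{L}_\mathcal{Q}$ with all terms in $\mathcal{Q}$'' is in place, the diagram chase is routine, and this is presumably why \cite{rump2001modules} states it as a short lemma.
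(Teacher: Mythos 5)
The paper gives no proof of this lemma (it is quoted from \cite[Lemma 4]{rump2001modules}), so your argument stands or falls on its own. Your first inclusion $\mathsf{Proj}\,\mathcal{L}_\mathcal{Q}\subseteq\mathsf{Proj}\,\mathcal{Q}$ is fine: the cotilting property of Lemma \ref{QinL} gives an epimorphism $Q\twoheadrightarrow P$ with $Q\in\mathcal{Q}$, projectivity splits it, closure of $\mathcal{Q}$ under subobjects puts $P$ in $\mathcal{Q}$, and the lifting property in $\mathcal{Q}$ follows from exactness and fullness of the embedding (Remark \ref{inclusion}, Proposition \ref{exact c'nd}).

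The reverse inclusion, however, has a genuine gap at its key step. After pulling $\pi$ back along $q\colon Q\twoheadrightarrow N$ and covering the pullback by $Q''\in\mathcal{Q}$, the only cokernel in $\mathcal{Q}$ you have produced is $Q''\to Q$, and $\mathcal{Q}$-projectivity of $P$ can only lift morphisms $P\to Q$ along it; you have no such morphism. The only datum out of $P$ is $g\colon P\to N$, and $N$ need not lie in $\mathcal{Q}$, so ``lifting $g$ through $Q''\to Q\twoheadrightarrow N$'' is precisely the property you are trying to establish (projectivity of $P$ relative to an epimorphism with arbitrary target in $\mathcal{L}_\mathcal{Q}$), not something $\mathsf{Proj}\,\mathcal{Q}$ provides; indeed your construction never uses $g$, so it cannot output a lift of $g$. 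The repair is the move you mention and then abandon: pull back along $g$ rather than along $q$. Form $E=M\times_N P$; the projection $E\to P$ is an epimorphism in $\mathcal{L}_\mathcal{Q}$, and by the cotilting property there is an epimorphism $\tilde{Q}\twoheadrightarrow E$ with $\tilde{Q}\in\mathcal{Q}$, so the composite $\tilde{Q}\to P$ is an epimorphism in $\mathcal{L}_\mathcal{Q}$ between objects of $\mathcal{Q}$, hence a cokernel in $\mathcal{Q}$ (its kernel lies in $\mathcal{Q}$ by closure under subobjects; compare Proposition \ref{exact c'nd}). Applying $\mathcal{Q}$-projectivity of $P$ to $\mathrm{id}_P$ gives $s\colon P\to\tilde{Q}$ with $\tilde{Q}\to P$ composed with $s$ equal to $\mathrm{id}_P$, and then $P\xrightarrow{s}\tilde{Q}\to E\to M$ is the desired lift of $g$ along $\pi$. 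Equivalently: in an abelian category an object is projective if and only if every epimorphism onto it splits, and splitting any $M\twoheadrightarrow P$ follows by covering $M$ with an object of $\mathcal{Q}$ and lifting $\mathrm{id}_P$ in $\mathcal{Q}$.
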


\begin{cor} \label{projgen}
Let $\mathcal{Q}$ be a quasi-abelian category. Then $\mathcal{L}_\mathcal{Q}$ has a (small) projective generator if and only if $\mathcal{Q}$ has a (small) strong projective generator.
\end{cor}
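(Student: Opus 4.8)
The plan is to establish the corollary by combining Lemma \ref{projLprojQ} (which identifies $\mathsf{Proj}\mathcal{Q}$ with $\mathsf{Proj}\mathcal{L}_\mathcal{Q}$) with a careful translation of the generating property across the inclusion $\mathcal{Q} \hookrightarrow \mathcal{L}_\mathcal{Q}$ of Remark \ref{inclusion}. Recall that in the abelian category $\mathcal{L}_\mathcal{Q}$, an object $P$ is a projective generator precisely when $P$ is projective and every object of $\mathcal{L}_\mathcal{Q}$ is a quotient (in $\mathcal{L}_\mathcal{Q}$) of a coproduct of copies of $P$; and $P$ is a \emph{small} projective generator if, in addition, $P$ is a compact object, equivalently $\Hom_{\mathcal{L}_\mathcal{Q}}(P, -)$ commutes with coproducts. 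So the two implications amount to transporting ``epimorphism onto'' and ``coproduct of copies of $P$'' between $\mathcal{Q}$ and $\mathcal{L}_\mathcal{Q}$.

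\textbf{($\Leftarrow$)} First I would assume $P \in \mathcal{Q}$ is a strong projective generator of $\mathcal{Q}$. By Lemma \ref{projLprojQ}, $P$ is projective in $\mathcal{L}_\mathcal{Q}$. Given any object $L \in \mathcal{L}_\mathcal{Q}$, Lemma \ref{QinL} says $L$ is a quotient in $\mathcal{L}_\mathcal{Q}$ of some object $X \in \mathcal{Q}$; and by the strong projective generator hypothesis there is a cokernel (hence epimorphism) $P^I \twoheadrightarrow X$ in $\mathcal{Q}$, which remains an epimorphism in $\mathcal{L}_\mathcal{Q}$ since the inclusion is exact (Remark \ref{inclusion}). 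Here one must check that the coproduct $P^{(I)}$ formed in $\mathcal{Q}$ agrees with the one formed in $\mathcal{L}_\mathcal{Q}$; this follows because a torsionfree class is closed under products and the inclusion of a torsionfree class preserves the relevant (co)limits — more safely, one uses that $\mathcal{Q}$ is closed under coproducts inside $\mathcal{L}_\mathcal{Q}$ when they exist, or reduces to the case of finite coproducts plus a compactness argument for the small case. Composing $P^I \twoheadrightarrow X \twoheadrightarrow L$ exhibits $L$ as a quotient of $P^I$, so $P$ generates $\mathcal{L}_\mathcal{Q}$. For the ``small'' refinement, note that a strong projective generator $P$ of $\mathcal{Q}$ that is small means $\Hom_\mathcal{Q}(P,-)$ preserves coproducts; since $\Hom_{\mathcal{L}_\mathcal{Q}}(P, X) = \Hom_\mathcal{Q}(P, X)$ for $X \in \mathcal{Q}$ by fullness and faithfulness, and every object of $\mathcal{L}_\mathcal{Q}$ has a presentation by objects of $\mathcal{Q}$, one deduces compactness of $P$ in $\mathcal{L}_\mathcal{Q}$ by a standard two-term presentation diagram chase.

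\textbf{($\Rightarrow$)} Conversely, assume $P$ is a (small) projective generator of $\mathcal{L}_\mathcal{Q}$. Since $\mathcal{L}_\mathcal{Q}$ has a projective generator, it has enough projectives, and in particular every projective object of $\mathcal{L}_\mathcal{Q}$ — being a summand of a coproduct of copies of $P$ — lies in $\mathcal{Q}$ because $\mathcal{Q}$ is closed under coproducts and summands (idempotents split in the pre-abelian $\mathcal{Q}$); in any case by Lemma \ref{projLprojQ} we directly get $P \in \mathsf{Proj}\mathcal{Q}$. Now for $X \in \mathcal{Q}$, there is an epimorphism $P^I \twoheadrightarrow X$ in $\mathcal{L}_\mathcal{Q}$; I must argue this epimorphism is in fact a cokernel \emph{in $\mathcal{Q}$}, i.e. that the map lies in $\mathcal{Q}$ (clear, as $P^I, X \in \mathcal{Q}$) and that it is a cokernel there. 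This is where the torsionfree-class structure is used: an epimorphism between objects of a torsionfree class need not be an epimorphism in $\mathcal{Q}$'s exact structure, but since $X \in \mathcal{Q}$ and the map is a genuine epimorphism in $\mathcal{L}_\mathcal{Q}$ with $\mathcal{Q}$ closed under subobjects, the cokernel of $P^I \to X$ computed in $\mathcal{Q}$ coincides with the one in $\mathcal{L}_\mathcal{Q}$, namely $0$; hence the map is a cokernel in $\mathcal{Q}$. Together with projectivity, this says $P$ is a strong projective generator of $\mathcal{Q}$. The small case again follows from $\Hom$-agreement on $\mathcal{Q}$.

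\textbf{The main obstacle} I expect is the careful bookkeeping of (co)products and of the two different notions of epimorphism: an arrow in $\mathcal{Q}$ that is epic in the abelian overcategory $\mathcal{L}_\mathcal{Q}$ is not automatically a cokernel in $\mathcal{Q}$, so one has to exploit that $\mathcal{Q}$ is a torsionfree class (closed under subobjects) to force the cokernel to vanish, and one has to confirm that coproducts of $P$ are preserved by the inclusion so that ``$P^I$ in $\mathcal{Q}$'' and ``$P^I$ in $\mathcal{L}_\mathcal{Q}$'' are the same object. Once those two points are nailed down, the equivalence is a formal consequence of Lemmas \ref{QinL} and \ref{projLprojQ} and Remark \ref{inclusion}.
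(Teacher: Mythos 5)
Your ($\Leftarrow$) direction is essentially the paper's argument: $P$ is projective in $\mathcal{L}_\mathcal{Q}$ by Lemma \ref{projLprojQ}, a cokernel $P^I \twoheadrightarrow X$ in $\mathcal{Q}$ stays epic under the exact embedding of Remark \ref{inclusion}, and composing with an epimorphism $X \twoheadrightarrow L$ with $X \in \mathcal{Q}$ gives the generating property; the only difference is that you quote the cotilting statement of Lemma \ref{QinL} for that last epimorphism, whereas the paper manufactures it by an explicit cokernel computation in the two-term complex model. Your bookkeeping worries about coproducts and the small case are reasonable but are not treated in the paper's proof either, so I do not press them.

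The gap is in ($\Rightarrow$). You need a \emph{cokernel} $P^I \twoheadrightarrow X$ in $\mathcal{Q}$, but what you have is a morphism $p\colon P^I \to X$ between objects of $\mathcal{Q}$ that is an epimorphism in $\mathcal{L}_\mathcal{Q}$, and you infer that it is a cokernel in $\mathcal{Q}$ from the vanishing of its cokernel in $\mathcal{Q}$. That inference is invalid: in a quasi-abelian category an epimorphism (equivalently, a map with zero cokernel) need not be a strict epimorphism, i.e.\ need not arise as the cokernel of any map -- this is exactly the failure of abelianness. For instance, in the quasi-abelian category of torsionfree abelian groups, multiplication by $2$ on $\mathbb{Z}$ has zero cokernel but is not a cokernel. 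Your intended conclusion is true, but it needs an actual argument, for example: since $p$ is epic in the abelian category $\mathcal{L}_\mathcal{Q}$, it is the cokernel there of $k\colon \Ker p \to P^I$; the kernel lies in $\mathcal{Q}$ because $\mathcal{Q}$ is closed under subobjects, and since cokernels in $\mathcal{Q}$ are the $\mathcal{Q}$-torsionfree quotients of cokernels in $\mathcal{L}_\mathcal{Q}$ (as in Lemma \ref{preab}) and $X$ already lies in $\mathcal{Q}$, the map $p$ is also the cokernel of $k$ computed in $\mathcal{Q}$. The paper reaches the same point by a different mechanism: it computes the cokernel of $p$ explicitly in the complex model of Remark \ref{rem}; its vanishing yields a null-homotopy, i.e.\ a section of the canonical map $\tilde{p}\colon \Coim p \to X$, so $\tilde{p}$ is a retraction, hence a cokernel, and the composite of cokernels $P^I \twoheadrightarrow \Coim p \twoheadrightarrow X$ is the required cokernel in $\mathcal{Q}$. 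Either repair closes the gap; as written, your step does not.
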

\begin{proof}
Let $(0 \to P)$ be a projective generator of $\mathcal{L}_\mathcal{Q} =: \mathcal{L}$. Then, for all $X \in \mathcal{Q}$ there exists a morphism $p: P^I \to X$ in $\mathcal{Q}$ for some set $I$ such that 
\[ \begin{tikzcd} 0 \arrow[r] \arrow[d] & P \arrow[d, "p"] \\ 0 \arrow[r] & X \end{tikzcd}
\] is an epimorphism in $\mathcal{L}$. By computations as in Remark \ref{rem}\ref{Rkercoker}, the cokernel is 
\[ \begin{tikzcd} 0 \arrow[r] \arrow[d] & X \arrow[d, "1"] \\ \mathsf{Coim}p \arrow[r, "\tilde{p}"] & X. \end{tikzcd} \] By assumption, this morphism is null-homotopic, so there exists a morphism $h: X \to \mathsf{Coim}p$ such that $\tilde{p}h = 1_X$. In particular, $\tilde{p}$ is a retraction. Now, as retractions are cokernels, there is a composition of cokernels in $\mathcal{Q}$
\[ \begin{tikzcd} P^I \arrow[r] & \mathsf{Coim}p \arrow[r, "\tilde{p}"] & Z \end{tikzcd} \] which is necessarily again a cokernel  in $\mathcal{Q}$.

Conversely, suppose that $P$ is a strong projective generator of $\mathcal{Q}$. We will show that $(0 \to P^I)$ is a projective generator of $\mathcal{L}$. By Proposition \ref{projLprojQ}, $(0 \to P)$ is projective, it remains to show that for all $(f:X \to Y) \in \mathcal{L}$ there exists an epimorphism  $(0 \to P) \twoheadrightarrow (f: X \to Y)$. To this end, let $p: P \twoheadrightarrow Y$ be a cokernel in $\mathcal{Q}$. We will show that the composition \[ \begin{tikzcd} 0 \arrow[r] \arrow[d] & P \arrow[d, "p"] \\ 0 \arrow[r] \arrow[d] & Y \arrow[d, "1"] \\ X \arrow[r, "f"] & Y \end{tikzcd} \] is epic. Since the embedding $\mathcal{Q} \hookrightarrow \mathcal{L}$ is exact, the upper commutative square is an epimorphism. By computations as in \ref{rem}\ref{Rkercoker}, the cokernel of the lower commutative square is \[ \begin{tikzcd} X \arrow[r, "f"] \arrow[d, "f"] & Y \arrow[d, "1"] \\ Y \arrow[r, "1"] & Y \end{tikzcd} \] which is null-homotopic by the identity morphism $Y \to Y$. Thus the lower commutative square is an epimorphism. We conclude that the composition is an epimorphism. 
\end{proof}

As a consequence, there is a `Gabriel-Mitchell theorem for quasi-abelian categories', giving conditions on $\mathcal{Q}$ such that $\mathcal{L}_\mathcal{Q}$ is a module category.

\begin{prop} \cite[Proposition 12]{rump2001modules}
Let $\mathcal{Q}$ be a quasi-abelian category. Then $\mathcal{L}_\mathcal{Q} \cong \mathsf{Mod}\Lambda$ if and only if $\mathcal{Q}$ has a small strong projective generator $P$. Moreover, in this case $\Lambda \cong \mathsf{End}_{\mathcal{Q}}P$. 
\end{prop}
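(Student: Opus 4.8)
The plan is to reduce the statement to the classical Gabriel-Mitchell characterisation of module categories: an abelian category $\mathcal{B}$ is equivalent to $\mathsf{Mod}\Lambda$ for some ring $\Lambda$ if and only if $\mathcal{B}$ is cocomplete and possesses a small projective generator $U$, and in that case $\mathsf{Hom}_\mathcal{B}(U,-)$ induces an equivalence $\mathcal{B}\cong\mathsf{Mod}(\mathsf{End}_\mathcal{B}U)$. We already know from the construction above that $\mathcal{L}_\mathcal{Q}$ is abelian, and Corollary \ref{projgen} puts small projective generators of $\mathcal{L}_\mathcal{Q}$ in bijection with small strong projective generators of $\mathcal{Q}$; so the only substantive points to address are that $\mathcal{L}_\mathcal{Q}$ is cocomplete and the comparison of the relevant endomorphism rings.

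For the direction $(\Leftarrow)$, suppose $P$ is a small strong projective generator of $\mathcal{Q}$. By Corollary \ref{projgen}, $(0\to P)\in\mathcal{L}_\mathcal{Q}$ is a small projective generator. To apply the classical theorem I must know that $\mathcal{L}_\mathcal{Q}$ is cocomplete; I would deduce this from the fact that all copowers $P^I$ ($I$ a set) exist in $\mathcal{Q}$ by the definition of a strong projective generator, together with the elementary observation that an abelian category carrying a projective generator all of whose copowers exist is automatically cocomplete — an arbitrary coproduct $\coprod X_i$ being realised as the cokernel of the block-diagonal map $P^J\to P^K$ between copowers of $P$ read off from projective presentations $P^{J_i}\to P^{K_i}\to X_i\to 0$, with the universal property verified using projectivity of $P$. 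The classical theorem then yields $\mathcal{L}_\mathcal{Q}\cong\mathsf{Mod}(\mathsf{End}_{\mathcal{L}_\mathcal{Q}}P)$. For $(\Rightarrow)$, if $G:\mathcal{L}_\mathcal{Q}\to\mathsf{Mod}\Lambda$ is an equivalence then $P:=G^{-1}(\Lambda)$ is a small projective generator of $\mathcal{L}_\mathcal{Q}$ with $\mathsf{End}_{\mathcal{L}_\mathcal{Q}}P\cong\mathsf{End}_{\mathsf{Mod}\Lambda}\Lambda\cong\Lambda$; by Lemma \ref{projLprojQ} we have $P\in\mathsf{Proj}\mathcal{L}_\mathcal{Q}=\mathsf{Proj}\mathcal{Q}\subseteq\mathcal{Q}$, and the argument proving Corollary \ref{projgen} shows that $P$ is then a small strong projective generator of $\mathcal{Q}$.

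In either case the final assertion is immediate from Remark \ref{inclusion}: the canonical embedding $\mathcal{Q}\hookrightarrow\mathcal{L}_\mathcal{Q}$ is full and faithful, so $\mathsf{End}_\mathcal{Q}P\cong\mathsf{End}_{\mathcal{L}_\mathcal{Q}}P\cong\Lambda$. I expect the real obstacle to be the cocompleteness of $\mathcal{L}_\mathcal{Q}$ — concretely, confirming that a copower $P^I$ formed in $\mathcal{Q}$ still serves as the coproduct of $I$ copies of $(0\to P)$ in $\mathcal{L}_\mathcal{Q}$. This is not formal, since $\mathcal{Q}\hookrightarrow\mathcal{L}_\mathcal{Q}$, being the inclusion of a torsionfree class, is a right adjoint and so need not preserve infinite coproducts, and because the presentation of $\mathcal{L}_\mathcal{Q}$ as a localisation of $\mathsf{K}(\mathcal{Q})$ makes its colimits opaque; I would verify it on objects of $\mathcal{L}_\mathcal{Q}$ by resolving them using the cotilting property of $\mathcal{Q}\subseteq\mathcal{L}_\mathcal{Q}$ supplied by Lemma \ref{QinL}. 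Everything else — transporting projective generators along the equivalence, and the reductions through Corollary \ref{projgen} and Lemma \ref{projLprojQ} — is routine.
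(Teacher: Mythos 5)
The paper itself offers no argument for this proposition --- it is quoted from Rump's Proposition 12 and is presented as a consequence of Corollary \ref{projgen} --- so your proof supplies exactly the derivation the paper outsources, and along the intended skeleton: the classical Gabriel--Mitchell/Freyd characterisation of module categories, Corollary \ref{projgen} to move the generator between $\mathcal{Q}$ and $\mathcal{L}_\mathcal{Q}$ (and Lemma \ref{projLprojQ} for the converse direction), and full faithfulness of the embedding (Remark \ref{inclusion}) for $\mathsf{End}_\mathcal{Q}P\cong\mathsf{End}_{\mathcal{L}_\mathcal{Q}}P\cong\Lambda$. Two comments. First, your assertion that \emph{all} copowers $P^I$ exist in $\mathcal{Q}$ ``by the definition of a strong projective generator'' does not follow from the definition as recorded in this paper, which only asks, for each $X$, for \emph{some} copower admitting a cokernel onto $X$; read literally that is too weak (for instance $\mathcal{Q}=\mathsf{mod}\,k$ over a field satisfies it, yet $\mathcal{L}_\mathcal{Q}\simeq\mathsf{mod}\,k$ is not equivalent to any $\mathsf{Mod}\Lambda$), so the availability of arbitrary copowers of $P$ is genuinely part of the hypotheses in Rump's formulation, and your charitable reading is the one under which the statement holds. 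Second, the step you flag as the real obstacle --- that a copower of $P$ formed in $\mathcal{Q}$ remains a coproduct in $\mathcal{L}_\mathcal{Q}$ --- does close along the lines you sketch: such a copower is projective in $\mathcal{Q}$, hence in $\mathcal{L}_\mathcal{Q}$ by Lemma \ref{projLprojQ}; presenting any $X\in\mathcal{L}_\mathcal{Q}$ by an exact sequence $Q_1\to Q_0\to X\to 0$ with $Q_0,Q_1\in\mathcal{Q}$ (Lemma \ref{QinL}) and comparing the exact functors $\Hom_{\mathcal{L}_\mathcal{Q}}(P^I,-)$ and $\prod_I\Hom_{\mathcal{L}_\mathcal{Q}}(P,-)$, which agree on $Q_0$ and $Q_1$ by full faithfulness of the embedding and the universal property in $\mathcal{Q}$, gives the isomorphism at $X$. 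With those two points made explicit, your argument is complete and consistent with the level of detail at which the paper treats Corollary \ref{projgen}.
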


We may also describe when $\mathcal{L}_\mathcal{Q}$ is a small module category over certain classes of rings.

\begin{lem} \label{Noethsubobj}
Let $\mathcal{Q}$ be a quasi-abelian category. Then $\mathcal{L}_\mathcal{Q}$ is noetherian (resp. artinian) with respect to subobjects (that is, every ascending (resp. descending) chain of subobjects stabilises) if and only if $\mathcal{Q}$ is noetherian (resp. artinian) with respect to subobjects.
\end{lem}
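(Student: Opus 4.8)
The plan is to transfer chains of subobjects back and forth along the canonical embedding $\mathcal{Q} \hookrightarrow \mathcal{L}_\mathcal{Q}$ of Remark \ref{inclusion}, using that $\mathcal{Q}$ is a cotilting torsionfree class in $\mathcal{L}_\mathcal{Q}$ by Lemma \ref{QinL}. I shall argue the noetherian statement; the artinian one follows by replacing ``ascending'' with ``descending'' verbatim throughout, since the argument only ever forms preimages of subobjects (never quotients) and so needs no genuine dualisation.

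The first step is a compatibility observation: for every $X \in \mathcal{Q}$, the poset of subobjects of $X$ computed in $\mathcal{Q}$ agrees with the poset of subobjects of $X$ computed in $\mathcal{L}_\mathcal{Q}$. Since $\mathcal{Q}$ is a torsionfree class in $\mathcal{L}_\mathcal{Q}$ it is closed under subobjects, so any subobject of $X$ taken in $\mathcal{L}_\mathcal{Q}$ already lies in $\mathcal{Q}$. The embedding is full, faithful, exact and preserves monomorphisms (Remark \ref{inclusion}): monomorphisms of $\mathcal{Q}$ therefore stay monic in $\mathcal{L}_\mathcal{Q}$, and conversely, if $g \colon Y \to X$ is a morphism of $\mathcal{Q}$ (with $Y, X \in \mathcal{Q}$) that is monic in $\mathcal{L}_\mathcal{Q}$, then the composite of the monomorphism $\Ker_\mathcal{Q} g \rightarrowtail Y$ with $g$ is zero, hence $\Ker_\mathcal{Q} g \rightarrowtail Y$ is zero in $\mathcal{L}_\mathcal{Q}$, hence zero in $\mathcal{Q}$ by faithfulness, forcing $\Ker_\mathcal{Q} g = 0$, i.e.\ $g$ is monic in $\mathcal{Q}$. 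Granting this, the ``if'' direction is immediate: an ascending (resp.\ descending) chain of subobjects of $X \in \mathcal{Q}$ is also such a chain of subobjects of $X$ in $\mathcal{L}_\mathcal{Q}$, so it stabilises whenever $\mathcal{L}_\mathcal{Q}$ is noetherian (resp.\ artinian) with respect to subobjects.

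For the ``only if'' direction, assume $\mathcal{Q}$ is noetherian with respect to subobjects, let $L \in \mathcal{L}_\mathcal{Q}$, and let $L_1 \subseteq L_2 \subseteq \cdots$ be an ascending chain of subobjects of $L$. Since $\mathcal{Q}$ is cotilting in $\mathcal{L}_\mathcal{Q}$ there is an epimorphism $q \colon X \twoheadrightarrow L$ with $X \in \mathcal{Q}$. Set $X_i := q^{-1}(L_i)$, the pullback of $L_i \rightarrowtail L$ along $q$ in the abelian category $\mathcal{L}_\mathcal{Q}$. Then $X_1 \subseteq X_2 \subseteq \cdots$ is an ascending chain of subobjects of $X$ in $\mathcal{L}_\mathcal{Q}$; each $X_i$ lies in $\mathcal{Q}$ because $\mathcal{Q}$ is closed under subobjects; and by the compatibility of the previous paragraph this is an ascending chain of subobjects of $X$ in $\mathcal{Q}$. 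Hence it stabilises, say $X_n = X_{n+1}$ for all $n \geq N$. Applying $q$ and using $q(q^{-1}(L_i)) = L_i$ (valid because $q$ is epic) yields $L_n = L_{n+1}$ for all $n \geq N$. This shows $\mathcal{L}_\mathcal{Q}$ is noetherian with respect to subobjects, and the artinian case is identical with descending chains.

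The argument is largely bookkeeping; the one place requiring a little care is the compatibility observation of the second paragraph, together with the fact that forming preimages along the epimorphism $q$ defines an order-preserving map on subobject posets satisfying $q(q^{-1}(-)) = \mathrm{id}$ on subobjects of $L$. Both are formal consequences of the properties of $\mathcal{Q} \hookrightarrow \mathcal{L}_\mathcal{Q}$ recorded in Remark \ref{inclusion} and Lemma \ref{QinL}, so I do not anticipate any serious obstacle.
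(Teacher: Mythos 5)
Your proof is correct, but it takes a genuinely different route from the paper. The paper works inside the explicit model of $\mathcal{L}_\mathcal{Q}$ built in Section 5.1: using the description of kernels in Remark \ref{rem}(b), every subobject of an object $(f\colon X\to Y)$ of $\mathcal{L}_\mathcal{Q}$ is represented by an intermediate subobject $X\subseteq A\subseteq Y$ in $\mathcal{Q}$, so chains of subobjects in $\mathcal{L}_\mathcal{Q}$ are literally chains of subobjects of $Y$ in $\mathcal{Q}$ (lying above $X$), and both directions of the equivalence drop out of this single lattice identification. You instead argue model-freely, using only what Lemma \ref{QinL} and Remark \ref{inclusion} record: $\mathcal{Q}$ is a full exact subcategory closed under subobjects whose subobject posets agree with those computed in $\mathcal{L}_\mathcal{Q}$, which gives the ``if'' direction, and for ``only if'' you pull a chain of subobjects of $L\in\mathcal{L}_\mathcal{Q}$ back along an epimorphism $q\colon X\twoheadrightarrow L$ with $X\in\mathcal{Q}$ (cotilting), using stability of epimorphisms under pullback to get $q(q^{-1}(L_i))=L_i$; your observation that the artinian case needs no dualisation because only preimages are formed is also sound. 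What each approach buys: the paper's argument yields the sharper structural fact that the subobject lattice of an arbitrary object of $\mathcal{L}_\mathcal{Q}$ is an interval in a subobject lattice of $\mathcal{Q}$, but it is tied to the particular construction of $\mathcal{L}_\mathcal{Q}$; yours uses nothing beyond the abstract embedding and so actually proves the more general statement that an abelian category is noetherian (resp. artinian) with respect to subobjects if and only if any cotilting torsionfree class in it is.
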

\begin{proof}
Since $\mathcal{L}_\mathcal{Q}$ is abelian, we may identify subobjects with kernels, thus by \ref{rem}\ref{Rkercoker} we may assume all subobjects of $(f: X \to Y)$ are of the form \[ \begin{tikzcd} X \arrow[r, "r"] \arrow[d, "1"] & A \arrow[d, "u"] \\ X \arrow[r, "f"] & Y \end{tikzcd} \] and note that $r$ and $u$ are monomorphisms.  Hence, an ascending chain of  $f$ subobjects in $\mathcal{L}_\mathcal{Q}$ corresponds to an ascending chain of subobjects 
\[ X \subset A_1   \subset A_2 \subset \dots \subset Y \] in $\mathcal{Q}$ and the claim follows. The dual argument holds for descending chains of subobjects.
\end{proof}

\begin{thm} \label{Lsmallmod}
Let $\mathcal{Q}$ be a quasi-abelian category. Then $\mathcal{L}_\mathcal{Q} \cong \mathsf{mod} \Lambda$ for a right noetherian (resp. artinian ring) if and only if $\mathcal{Q}$ is noetherian (resp. noetherian and artinian) with respect to subobjects and has a strong projective generator $P$. Moreover, in this case $\Lambda \cong \mathsf{End}_\mathcal{Q}(P)$.
\end{thm}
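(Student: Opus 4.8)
The plan is to combine the previously established Morita-type result with the two Noetherian/Artinian characterisations and the classical fact that a module category $\mathsf{Mod}\Lambda$ restricts to $\mathsf{mod}\Lambda$ being abelian with the ring $\Lambda$ being right Noetherian (resp. right Artinian) precisely when $\Lambda$ is right Noetherian (resp. right Artinian) as a module over itself. Concretely: by the Gabriel--Mitchell-type statement (the unnumbered Proposition \cite[Proposition 12]{rump2001modules} quoted just above), $\mathcal{Q}$ has a small strong projective generator $P$ if and only if $\mathcal{L}_\mathcal{Q} \cong \mathsf{Mod}\Lambda$ with $\Lambda \cong \mathsf{End}_\mathcal{Q}(P)$. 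So I would first reduce to: assuming $\mathcal{L}_\mathcal{Q} \cong \mathsf{Mod}\Lambda$, show that $\mathcal{L}_\mathcal{Q}$ being Noetherian (resp. Noetherian and Artinian) with respect to subobjects is equivalent to $\Lambda$ being right Noetherian (resp. right Artinian), and then invoke Lemma \ref{Noethsubobj} to transfer the subobject condition between $\mathcal{Q}$ and $\mathcal{L}_\mathcal{Q}$.

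First I would handle the forward direction. Suppose $\mathcal{Q}$ is Noetherian with respect to subobjects and has a strong projective generator $P$; note $P$ is automatically small here since $\mathsf{End}_\mathcal{Q}(P) = \mathsf{End}_{\mathcal{L}_\mathcal{Q}}(P)$ must be a set --- actually I should be careful: the statement as phrased does not say "small", so I would first observe that a strong projective generator in a category that is Noetherian with respect to subobjects is necessarily small (indeed $P^I \twoheadrightarrow P$ for the identity gives, by the Noetherian condition on subobjects of $P$, that $P$ is a quotient of a finite subcoproduct $P^n$, and $P$ being a direct summand of $P^n$ makes $P$ small). Hence $\mathcal{L}_\mathcal{Q} \cong \mathsf{Mod}\Lambda$ with $\Lambda = \mathsf{End}_\mathcal{Q}(P)$. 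By Lemma \ref{Noethsubobj}, $\mathsf{Mod}\Lambda$ is Noetherian with respect to subobjects, which exactly says every ascending chain of submodules of every module stabilises; in particular $\Lambda_\Lambda$ is a Noetherian module, i.e.\ $\Lambda$ is right Noetherian. Then $\mathsf{mod}\Lambda$ (finitely generated modules) is an abelian category and equals the subcategory of Noetherian objects of $\mathsf{Mod}\Lambda$; but under the Noetherian-subobject hypothesis \emph{every} object of $\mathcal{L}_\mathcal{Q}$ is Noetherian, so I must be slightly more careful about which identification $\mathcal{L}_\mathcal{Q} \cong \mathsf{mod}\Lambda$ is intended. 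The resolution is that when $\mathcal{Q}$ is Noetherian with respect to subobjects and has a strong projective generator $P$, every object of $\mathcal{Q}$ (hence every object of $\mathcal{L}_\mathcal{Q}$, using Lemma \ref{QinL} that objects of $\mathcal{L}_\mathcal{Q}$ are quotients of objects of $\mathcal{Q}$) is a quotient of $P^n$ for \emph{finite} $n$ --- this is precisely where the Noetherian condition is used to upgrade the arbitrary index set $I$ to a finite one --- so every object of $\mathcal{L}_\mathcal{Q}$ is finitely generated, giving $\mathcal{L}_\mathcal{Q} \cong \mathsf{mod}\Lambda$ with $\Lambda$ right Noetherian. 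For the Artinian case, additionally assuming $\mathcal{Q}$ Artinian with respect to subobjects, Lemma \ref{Noethsubobj} gives $\mathsf{Mod}\Lambda$ Artinian with respect to subobjects, so $\Lambda_\Lambda$ is Artinian, i.e.\ $\Lambda$ is right Artinian.

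For the converse, suppose $\mathcal{L}_\mathcal{Q} \cong \mathsf{mod}\Lambda$ with $\Lambda$ right Noetherian. Then $\mathsf{mod}\Lambda$ is abelian and every finitely generated module over a right Noetherian ring is Noetherian, so $\mathsf{mod}\Lambda$ --- hence $\mathcal{L}_\mathcal{Q}$ --- is Noetherian with respect to subobjects; by Lemma \ref{Noethsubobj}, $\mathcal{Q}$ is Noetherian with respect to subobjects. Moreover $\mathsf{mod}\Lambda$ has the projective generator $\Lambda_\Lambda$, which is small, so by Corollary \ref{projgen} $\mathcal{Q}$ has a (small) strong projective generator $P$; the Gabriel--Mitchell-type Proposition then forces $\mathsf{Mod}\Lambda' \cong \mathcal{L}_\mathcal{Q}$ for $\Lambda' = \mathsf{End}_\mathcal{Q}(P)$, and comparing with the given equivalence $\mathcal{L}_\mathcal{Q} \cong \mathsf{mod}\Lambda$ one extracts $\Lambda \cong \mathsf{End}_\mathcal{Q}(P)$ (Morita-theoretically, $\Lambda \cong \mathsf{End}_{\mathsf{mod}\Lambda}(\Lambda) \cong \mathsf{End}_{\mathcal{L}_\mathcal{Q}}(P) = \mathsf{End}_\mathcal{Q}(P)$ using Lemma \ref{projLprojQ}). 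The Artinian case is identical: $\Lambda$ right Artinian makes every finitely generated $\Lambda$-module Artinian, so $\mathcal{L}_\mathcal{Q}$ is Artinian with respect to subobjects, hence $\mathcal{Q}$ is too by Lemma \ref{Noethsubobj}; and right Artinian implies right Noetherian so the Noetherian conclusions hold as well.

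The step I expect to be the main obstacle is the bookkeeping around the phrase ``$\mathcal{L}_\mathcal{Q} \cong \mathsf{mod}\Lambda$'': one must pin down that the Noetherian-subobject condition on $\mathcal{Q}$ is exactly what guarantees that in every cokernel $P^I \twoheadrightarrow X$ the set $I$ may be taken finite (so that $\mathcal{L}_\mathcal{Q}$, built from $\mathcal{Q}$ and quotients thereof, consists of finitely generated $\Lambda$-modules), and conversely that $\mathsf{mod}\Lambda$ for $\Lambda$ right Noetherian is genuinely Noetherian-with-respect-to-subobjects rather than merely having Noetherian generators. Once this equivalence ``Noetherian-subobjects $\Leftrightarrow$ finitely-generated-closure $\Leftrightarrow$ $\mathsf{mod}$ rather than $\mathsf{Mod}$'' is made precise, the rest is a direct chaining of Lemma \ref{Noethsubobj}, Corollary \ref{projgen}, the Gabriel--Mitchell-type Proposition, and the elementary ring theory identifying right Noetherian/Artinian rings via chain conditions on their own submodule lattices together with Morita invariance of $\mathsf{End}$ of the free rank-one module.
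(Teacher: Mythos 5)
Your overall strategy --- reduce to a module-category recognition theorem and then shuttle the chain conditions across Lemma \ref{Noethsubobj} and the generator across Corollary \ref{projgen} --- has the right shape, and it is essentially what the paper does; but the paper's recognition theorem is the one for \emph{small} module categories (\cite[Theorem 1 and Corollary 1]{lam1966category}): an abelian category is equivalent to $\mathsf{mod}\Lambda$ with $\Lambda$ right noetherian (resp.\ artinian) if and only if it has a projective generator and is noetherian (resp.\ noetherian and artinian) with respect to subobjects. Your argument instead routes through Rump's Proposition for $\mathsf{Mod}\Lambda$, and this is where it breaks: a nonzero category $\mathsf{Mod}\Lambda$ is \emph{never} noetherian with respect to subobjects, since $\Lambda \subset \Lambda^{2} \subset \Lambda^{3} \subset \cdots$ is a strictly ascending chain of subobjects of the countable coproduct $\Lambda^{(\mathbb{N})}$. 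So in your forward direction the assertion ``hence $\mathcal{L}_\mathcal{Q}\cong\mathsf{Mod}\Lambda$'' combined with Lemma \ref{Noethsubobj} is self-contradictory except when $\mathcal{Q}=0$; the deduction ``$\mathsf{Mod}\Lambda$ is noetherian with respect to subobjects, hence $\Lambda_\Lambda$ is noetherian'' argues from a premise that cannot hold, and the smallness argument for $P$ (taking $P^{I}\twoheadrightarrow P$ for the identity) is beside the point. Your subsequent patch --- every object of $\mathcal{L}_\mathcal{Q}$ is a quotient of some $P^{n}$, ``giving $\mathcal{L}_\mathcal{Q}\cong\mathsf{mod}\Lambda$'' --- is exactly the nontrivial content that is missing: nothing in your argument identifies objects of $\mathcal{L}_\mathcal{Q}$ with finitely generated $\Lambda$-modules. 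One needs that $\Hom_{\mathcal{L}_\mathcal{Q}}(P,-)$ is full, faithful, exact and essentially surjective onto $\mathsf{mod}\,\mathsf{End}_{\mathcal{Q}}(P)$, and that $\mathsf{End}_{\mathcal{Q}}(P)$ is right noetherian so that this target is abelian; this cannot be extracted from the $\mathsf{Mod}$-version, and you neither prove nor cite it. It is precisely the gap the paper closes by quoting Lam's theorem and then applying Corollary \ref{projgen} and Lemma \ref{Noethsubobj}.

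The same misuse recurs in your converse: if $\mathcal{L}_\mathcal{Q}\cong\mathsf{mod}\Lambda$ with $\Lambda\neq 0$, then invoking the $\mathsf{Mod}$-Proposition to get $\mathsf{Mod}\Lambda'\cong\mathcal{L}_\mathcal{Q}$ is impossible, because $\mathsf{Mod}\Lambda'$ has infinite coproducts while $\mathsf{mod}\Lambda$ does not. That half is repairable without it: $\mathsf{mod}\Lambda$ for $\Lambda$ right noetherian (resp.\ artinian) is noetherian (resp.\ also artinian) with respect to subobjects, so Lemma \ref{Noethsubobj} gives the chain condition on $\mathcal{Q}$, and the object corresponding to $\Lambda_\Lambda$ is a projective generator of $\mathcal{L}_\mathcal{Q}$, hence by Lemma \ref{projLprojQ} and Corollary \ref{projgen} yields a strong projective generator of $\mathcal{Q}$ with endomorphism ring $\Lambda$. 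As written, however, the forward direction has a genuine gap and the converse rests on a false intermediate claim.
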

\begin{proof}
By, for example, \cite[Theorem 1 and Corollary 1]{lam1966category} an abelian category is equivalent to a small module category over a right noetherian (resp. artinian ring) if and only if it admits a projective generator and is noetherian (resp. noetherian and artinian) with respect to subobjects; whence the claim follows from Corollary \ref{projgen} and Lemma \ref{Noethsubobj}.
\end{proof}

\begin{rem}
Dually, for a quasi-abelian category $\mathcal{Q}$, one may construct an abelian category $\mathcal{R}_\mathcal{Q}$ such that $\mathcal{Q}$ is a torsion class in $\mathcal{R}_\mathcal{Q}$. This gives another proof of Proposition \ref{qab}. Moreover, the categories $\mathcal{L}_\mathcal{Q}$ and $\mathcal{R}_\mathcal{Q}$ are derived equivalent and are related by tilting induced by $\mathcal{Q}$, see \cite{bondal2003generators}, \cite{fiorot2016n}, \cite{rump2001modules} and \cite{schneiders1999quasi} for more details.
\end{rem}

\subsection{Properties of torsion(free) classes}

We compare the properties of torsion and torsion free classes in quasi-abelian categories with the abelian setting.

\begin{prop} \label{qabinqab}
Every torsion  and torsionfree class in a quasi-abelian category has the structure of a quasi-abelian category.
\end{prop}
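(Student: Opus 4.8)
The plan is to reduce the statement to the already-established machinery for twin torsion pairs. By Lemma \ref{QinL}, a quasi-abelian category $\mathcal{Q}$ embeds as a (cotilting) torsionfree class in an abelian category $\mathcal{L}_\mathcal{Q}$; write $(\mathcal{C}, \mathcal{Q})$ for the corresponding torsion pair in $\mathcal{L}_\mathcal{Q}$, so that $[(\mathcal{C}, \mathcal{Q}), (\mathcal{L}_\mathcal{Q}, 0)]$ is a pair of twin torsion pairs in $\mathcal{L}_\mathcal{Q}$ with heart $\mathcal{L}_\mathcal{Q} \cap \mathcal{Q} = \mathcal{Q}$. The dual statement, about torsion classes, follows by replacing $\mathcal{Q}$ with $\mathcal{Q}^{\mathrm{op}}$ (which is again quasi-abelian, since the axioms in Definition \ref{defnn}(b) are self-dual), so it suffices to treat torsion classes; I will in fact prove the claim for an arbitrary heart and then specialise.

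First I would upgrade the statement to: for twin torsion pairs $[(\mathcal{C}, \mathcal{D}), (\mathcal{C}', \mathcal{D}')]$ in an abelian category $\mathcal{A}$ with heart $\mathcal{H} := \mathcal{C}' \cap \mathcal{D}$, and any torsion pair $(\mathcal{T}, \mathcal{F})$ in $\mathcal{H}$, both $\mathcal{T}$ and $\mathcal{F}$ are quasi-abelian. By Theorem \ref{qab}, $\mathcal{H}$ is itself quasi-abelian, and this is the category we are working inside. The key observation is that $(\mathcal{T}, \mathcal{F})$ gives rise, via Theorem \ref{torsbij1}, to a torsion pair $(\mathcal{C} \ast \mathcal{T}, \mathcal{F} \ast \mathcal{D}')$ in $\mathcal{A}$, so $(\mathcal{C} \ast \mathcal{T}, \mathcal{D})$ and $(\mathcal{C}', \mathcal{F} \ast \mathcal{D}')$ are again twin torsion pairs in $\mathcal{A}$ (using $\mathcal{C} \subseteq \mathcal{C} \ast \mathcal{T} \subseteq \mathcal{C}'$ and dually $\mathcal{D}' \subseteq \mathcal{F} \ast \mathcal{D}' \subseteq \mathcal{D}$, both of which are verified in the proof of Theorem \ref{torsbij1}). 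Then Lemma \ref{reflecthearts}, applied with the twin torsion pairs $[(\mathcal{C}, \mathcal{D}), (\mathcal{C}', \mathcal{D}')]$ in $\mathcal{A}$ and $[(\mathcal{T}, \mathcal{F}), (\mathcal{H}, 0)]$ in $\mathcal{H}$ (so that $\mathcal{T}' = \mathcal{H}$, $\mathcal{F} = \mathcal{F}$), identifies
\[
\mathcal{T} = \mathcal{H} \cap \mathcal{F}^{\text{(as torsion class)}} \quad\text{with}\quad (\mathcal{C} \ast \mathcal{H}) \cap (\mathcal{F} \ast \mathcal{D}') = \mathcal{C}' \cap (\mathcal{F} \ast \mathcal{D}'),
\]
and the latter is exactly the heart of the twin torsion pairs $[(\mathcal{C}, \mathcal{F} \ast \mathcal{D}'), (\mathcal{C}', \mathcal{D}')]$ in $\mathcal{A}$. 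A completely dual application expresses $\mathcal{F}$ as the heart of the twin torsion pairs $[(\mathcal{C}, \mathcal{D}), (\mathcal{C} \ast \mathcal{T}, \mathcal{D})]$ in $\mathcal{A}$. In both cases Theorem \ref{qab} then yields that $\mathcal{T}$ and $\mathcal{F}$ are quasi-abelian.

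To finish, I specialise: for $\mathcal{Q}$ quasi-abelian, a torsionfree class $\mathcal{F}$ in $\mathcal{Q}$ — viewing $\mathcal{Q}$ as the heart $\mathcal{L}_\mathcal{Q} \cap \mathcal{Q}$ of $[(\mathcal{C}, \mathcal{Q}), (\mathcal{L}_\mathcal{Q}, 0)]$ — is, by the above, the heart of a pair of twin torsion pairs in $\mathcal{L}_\mathcal{Q}$, hence quasi-abelian by Theorem \ref{qab}; and a torsion class in $\mathcal{Q}$ is handled by the dual argument, equivalently by the torsion-class half of the boxed computation. The main obstacle I anticipate is purely bookkeeping: checking carefully that the various pairs I assemble (e.g. $(\mathcal{C} \ast \mathcal{T}, \mathcal{D})$ and $(\mathcal{C}', \mathcal{F} \ast \mathcal{D}')$) really are torsion pairs in $\mathcal{A}$ satisfying the twin containment, and that Lemma \ref{reflecthearts} is being invoked with the roles of the four torsion classes matched up correctly — there is genuine risk of sign/variance confusion between torsion and torsionfree sides, and one must make sure the "twin" hypotheses $\mathcal{C} \subseteq \mathcal{C}'$ transport correctly through the $\ast$-operations. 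None of the individual verifications is hard, but the diagram of which heart equals which intersection needs to be drawn explicitly to avoid error. An alternative, should Lemma \ref{reflecthearts} prove awkward to apply directly, is to argue intrinsically: use Proposition \ref{adfunk} to get the torsion radical $\mathsf{t}: \mathcal{Q} \to \mathcal{T}$, note it is a kernel subfunctor, and verify the pullback-stability of cokernels (resp. pushout-stability of kernels) in $\mathcal{T}$ by lifting the relevant squares to $\mathcal{Q}$, using that kernels in $\mathcal{T}$ are computed as in Lemma \ref{preab} — but I expect the reduction via Theorems \ref{qab} and \ref{torsbij1} to be cleaner.
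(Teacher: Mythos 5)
Your overall strategy is the paper's: embed $\mathcal{Q}$ as a torsionfree class (equivalently, as the heart of the twin torsion pairs $[({}^{\perp}\mathcal{Q},\mathcal{Q}),(\mathcal{L}_\mathcal{Q},0)]$) via Lemma \ref{QinL}, transport the torsion pair through Theorem \ref{torsbij1}, and conclude with Theorem \ref{qab}; the paper does exactly this, using for the torsionfree half simply that $\mathcal{F}$ is a torsionfree class of $\mathcal{L}_\mathcal{Q}$ and for the torsion half that $\mathcal{T}={}^{\perp}\mathcal{F}\cap\mathcal{Q}$ is a heart of twin torsion pairs. However, your bookkeeping is off in exactly the way you feared. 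The pairs you write as $(\mathcal{C}\ast\mathcal{T},\mathcal{D})$, $(\mathcal{C}',\mathcal{F}\ast\mathcal{D}')$ and $(\mathcal{C},\mathcal{F}\ast\mathcal{D}')$ are not torsion pairs in $\mathcal{A}$ (the torsion pair produced by Theorem \ref{torsbij1} is $(\mathcal{C}\ast\mathcal{T},\mathcal{F}\ast\mathcal{D}')$); the intended twin couples are $[(\mathcal{C},\mathcal{D}),(\mathcal{C}\ast\mathcal{T},\mathcal{F}\ast\mathcal{D}')]$ and $[(\mathcal{C}\ast\mathcal{T},\mathcal{F}\ast\mathcal{D}'),(\mathcal{C}',\mathcal{D}')]$. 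Moreover your displayed identification is swapped: applying Lemma \ref{reflecthearts} to $[(\mathcal{T},\mathcal{F}),(\mathcal{H},0)]$ gives $\mathcal{H}\cap\mathcal{F}=\mathcal{F}$ (not $\mathcal{T}$) on the left, and indeed $\mathcal{C}'\cap(\mathcal{F}\ast\mathcal{D}')=\mathcal{F}$, while dually $(\mathcal{C}\ast\mathcal{T})\cap\mathcal{D}=\mathcal{T}$ (not $\mathcal{F}$); both equalities are already contained in the ``mutually inverse'' part of the proof of Theorem \ref{torsbij1}, so Lemma \ref{reflecthearts} is not needed. Since after correcting the labels both $\mathcal{T}$ and $\mathcal{F}$ are realised as hearts of twin torsion pairs in $\mathcal{A}$, Theorem \ref{qab} still yields the claim, so the argument is sound once these slips are fixed.
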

\begin{proof} 
By Theorem \ref{torsbij1} every torsionfree class, $\mathcal{F}$, of a quasi-abelian category $\mathcal{Q}$ is a torsion class of $\mathcal{L}_\mathcal{Q}$ that happens to lie in $\mathcal{Q}$ and is therefore quasi-abelian.  Theorem \ref{torsbij1} also tells us that the associated torsion class in $\mathcal{Q}$ is $\mathcal{T} \cap \mathcal{Q}$ where $\mathcal{T} = {}^\perp \mathcal{F}$ in $\mathcal{L}_\mathcal{Q}$. But $\mathcal{T} \cap \mathcal{Q}$ is the intersection of a torsion and torsionfree class, and as $\mathcal{F} \subseteq \mathcal{Q}$, by Proposition \ref{qab}, it is quasi-abelian. 
\end{proof}

There is an immediate consequence of Lemma \ref{reflecthearts}. 
\begin{cor}
The heart of twin torsion pairs in a quasi-abelian category is quasi-abelian.
\end{cor}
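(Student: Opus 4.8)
The statement to prove is that the heart of twin torsion pairs in a quasi-abelian category $\mathcal{Q}$ is again quasi-abelian. The natural plan is to transport the situation from $\mathcal{Q}$ to its left associated abelian category $\mathcal{L}_\mathcal{Q}$, apply Theorem \ref{qab} (hearts of twin torsion pairs in abelian categories are quasi-abelian), and check that the heart computed in $\mathcal{L}_\mathcal{Q}$ coincides with the heart computed inside $\mathcal{Q}$. The key input making this comparison possible is Lemma \ref{reflecthearts}, which says exactly that the $\ast$-construction on torsion pairs is compatible with taking hearts.

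First I would set up the translation. By Lemma \ref{QinL}, $\mathcal{Q}$ is a torsionfree class in $\mathcal{L}_\mathcal{Q}$; writing $\mathcal{G}$ for the associated torsion class, the pair $[(\mathcal{G}, \mathcal{Q}), (\mathcal{L}_\mathcal{Q}, 0)]$ is a twin torsion pair in $\mathcal{L}_\mathcal{Q}$ whose heart is $\mathcal{L}_\mathcal{Q} \cap \mathcal{Q} = \mathcal{Q}$. Now suppose $[(\mathcal{T}, \mathcal{F}), (\mathcal{T}', \mathcal{F}')]$ is a pair of twin torsion pairs in $\mathcal{Q}$; we must show the heart $\mathcal{T}' \cap \mathcal{F}$ (taken inside $\mathcal{Q}$) is quasi-abelian. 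By Theorem \ref{torsbij1} applied to the twin torsion pairs $[(\mathcal{G}, \mathcal{Q}), (\mathcal{L}_\mathcal{Q}, 0)]$, the torsion pairs $(\mathcal{T}, \mathcal{F})$ and $(\mathcal{T}', \mathcal{F}')$ in $\mathcal{Q}$ correspond to torsion pairs $(\mathcal{G} \ast \mathcal{T}, \mathcal{F})$ and $(\mathcal{G} \ast \mathcal{T}', \mathcal{F}')$ in $\mathcal{L}_\mathcal{Q}$ (the torsionfree parts being unchanged since the ambient torsion pair has trivial $\mathcal{D}'$, so $\mathcal{F} \ast 0 = \mathcal{F}$). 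Since the bijection of Theorem \ref{torsbij1} is inclusion preserving, $\mathcal{G} \ast \mathcal{T} \subseteq \mathcal{G} \ast \mathcal{T}'$, so these form a twin torsion pair $[(\mathcal{G}\ast\mathcal{T}, \mathcal{F}), (\mathcal{G}\ast\mathcal{T}', \mathcal{F}')]$ in the abelian category $\mathcal{L}_\mathcal{Q}$.

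Next I would invoke Theorem \ref{qab}: the heart $(\mathcal{G}\ast\mathcal{T}') \cap \mathcal{F}$ of this twin torsion pair in $\mathcal{L}_\mathcal{Q}$ is quasi-abelian. The final step is to identify this heart with $\mathcal{T}' \cap \mathcal{F}$. This is precisely Lemma \ref{reflecthearts} applied with $[(\mathcal{C},\mathcal{D}), (\mathcal{C}',\mathcal{D}')] = [(\mathcal{G}, \mathcal{Q}), (\mathcal{L}_\mathcal{Q}, 0)]$ and the twin torsion pairs $[(\mathcal{T},\mathcal{F}), (\mathcal{T}',\mathcal{F}')]$ in $\mathcal{Q} = \mathcal{L}_\mathcal{Q} \cap \mathcal{Q}$: it yields $\mathcal{T}' \cap \mathcal{F} = (\mathcal{G} \ast \mathcal{T}') \cap (\mathcal{F} \ast 0) = (\mathcal{G} \ast \mathcal{T}') \cap \mathcal{F}$. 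Hence $\mathcal{T}' \cap \mathcal{F}$ is quasi-abelian.

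The main subtlety, rather than an obstacle, is bookkeeping: one must be careful that "heart taken inside $\mathcal{Q}$" really agrees with "heart taken inside $\mathcal{L}_\mathcal{Q}$" — a priori a subcategory of $\mathcal{Q}$ could acquire more objects when its defining conditions ($\Hom$-vanishing, short exact sequences) are reinterpreted in $\mathcal{L}_\mathcal{Q}$ — but this is exactly what Lemma \ref{reflecthearts} handles, and it is the reason that lemma was proved. One should also double-check that $\mathcal{F} \ast \mathcal{D}'$ with $\mathcal{D}' = 0$ is indeed $\mathcal{F}$ (immediate from the definition, since $0 \in \mathcal{F}$), and that $\mathcal{T}, \mathcal{T}', \mathcal{F}, \mathcal{F}'$ really are torsion(free) classes in $\mathcal{Q}$ in the sense of Definition \ref{torsiondefn}, which is part of the hypothesis "twin torsion pairs in a quasi-abelian category." With these checks in place the proof is a one-line chain of citations.
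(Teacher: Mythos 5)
Your proof is correct and follows the same route the paper intends: realise $\mathcal{Q}$ as the heart of the twin torsion pairs $[({}^{\perp}\mathcal{Q}, \mathcal{Q}), (\mathcal{L}_\mathcal{Q}, 0)]$ in $\mathcal{L}_\mathcal{Q}$, transfer the given twin torsion pairs along Theorem \ref{torsbij1}, apply Theorem \ref{qab} in $\mathcal{L}_\mathcal{Q}$, and use Lemma \ref{reflecthearts} to identify the two hearts — which is exactly the content the paper compresses into "immediate consequence of Lemma \ref{reflecthearts}." Your bookkeeping (inclusion-preservation giving the twin condition upstairs, and $\mathcal{F}\ast 0 = \mathcal{F}$) is accurate.
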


We now prove that the converse of Proposition \ref{additivetors} holds in quasi-abelian categories giving a familiar characterisation of torsion classes. 

\begin{prop} \label{qabaddtors} Let $\mathcal{Q}$ be a quasi-abelian category. Then a pair of full subcategories $(\mathcal{T}, \mathcal{F})$ is a torsion pair on $\mathcal{Q}$ if and only if the following hold.
\be \item For all $M \in \mathcal{Q}$, if $\Hom_\mathcal{Q}(M, \mathcal{F})=0$ then $M \in \mathcal{T}$.
\item For all $N \in \mathcal{Q}$, if $\Hom_\mathcal{Q}(\mathcal{T}, N)=0$ then $N \in \mathcal{F}$.
\ee
\end{prop}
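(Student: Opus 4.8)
The forward direction is precisely Proposition \ref{additivetors}, so the content is in the converse: assuming (a) and (b), I must produce the canonical short exact sequences of (T2) (condition (T1) being immediate, since if $M \in \mathcal{T}$ and $N \in \mathcal{F}$ then any $f \colon M \to N$ has $\Hom_\mathcal{Q}(M, \mathcal{F}) \ni \{f\}$... more carefully, $\Hom_\mathcal{Q}(\mathcal{T}, N) = 0$ for $N \in \mathcal{F}$ already gives $\Hom_\mathcal{Q}(\mathcal{T}, \mathcal{F}) = 0$ directly from (b) applied the other way—actually (T1) follows trivially from the definitions of $^\perp$ and either (a) or (b)). The plan is to transport the problem to the associated abelian category $\mathcal{L} := \mathcal{L}_\mathcal{Q}$ via Lemma \ref{QinL}, where $\mathcal{Q}$ sits as a cotilting torsionfree class, and then invoke Theorem \ref{torsbij1} with the twin torsion pairs $[({}^\perp\mathcal{Q}, \mathcal{Q}), (\mathcal{A}, 0)]$ (here ${}^\perp\mathcal{Q}$ is the torsion class in $\mathcal{L}$ with torsionfree class $\mathcal{Q}$). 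Under that bijection, torsion pairs in $\mathcal{Q}$ correspond to torsion pairs $(\mathcal{X}, \mathcal{Y})$ in $\mathcal{L}$ with $\mathcal{X} \supseteq {}^\perp\mathcal{Q}$; equivalently, to torsionfree classes $\mathcal{Y}$ of $\mathcal{L}$ contained in $\mathcal{Q}$, with the torsion class in $\mathcal{Q}$ being $({}^\perp\mathcal{Y}) \cap \mathcal{Q}$.

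So the key steps, in order: (1) Set $\mathcal{F}$ as given and let $\widetilde{\mathcal{F}}$ be the closure of $\mathcal{F}$ under the relevant operations in $\mathcal{L}$—more precisely, take $\mathcal{Y} := \mathcal{F}$ viewed in $\mathcal{L}$ and argue it is already a torsionfree class of $\mathcal{L}$. To do this I need to know $\mathcal{F}$ is closed under subobjects, extensions, and products \emph{in $\mathcal{L}$}. Closure under subobjects: a subobject of $F \in \mathcal{F}$ in $\mathcal{L}$ is a subobject in $\mathcal{Q}$ (since $\mathcal{Q}$ is closed under subobjects in $\mathcal{L}$, being a torsionfree class), so it suffices that $\mathcal{F}$ is closed under $\mathcal{Q}$-subobjects, which follows from (a) composed with the earlier proposition that torsionfree classes in pre-abelian categories... wait, that is circular. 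Instead: (2) use hypothesis (a) and (b) to show directly that $\mathcal{F}$, as a subcategory of $\mathcal{Q}$, is closed under subobjects, products and extensions; for instance if $0 \to N' \to N \to N'' \to 0$ is exact in $\mathcal{Q}$ with $N', N'' \in \mathcal{F}$, and $T \in \mathcal{T}$, then any $T \to N$ composed to $T \to N''$ is zero (by... hmm, we don't yet know $\mathcal{T} = {}^\perp\mathcal{F}$, but we can take $\mathcal{T}' := {}^\perp\mathcal{F}$ and note $\mathcal{T} \subseteq \mathcal{T}'$ while (a) forces $\mathcal{T}' \subseteq \mathcal{T}$, so $\mathcal{T} = {}^\perp\mathcal{F}$; symmetrically $\mathcal{F} = \mathcal{T}^\perp$), then $T \to N$ factors through $N'$ and is zero, so $\Hom(\mathcal{T}, N) = 0$ and $N \in \mathcal{F}$ by (b). Closure under subobjects and products is similar using (b). (3) Conclude $\mathcal{F}$ is closed under $\mathcal{Q}$-subobjects, $\mathcal{Q}$-extensions and $\mathcal{Q}$-products. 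Now I claim these suffice for $\mathcal{F}$ to be a torsionfree class of $\mathcal{L}$: a subobject in $\mathcal{L}$ of an object of $\mathcal{F} \subseteq \mathcal{Q}$ lies in $\mathcal{Q}$ (torsionfree classes are subobject-closed) hence is a $\mathcal{Q}$-subobject, hence in $\mathcal{F}$; an extension in $\mathcal{L}$ of two objects of $\mathcal{F} \subseteq \mathcal{Q}$—here I must be careful since $\mathcal{Q}$ need not be extension-closed in $\mathcal{L}$, but since $\mathcal{Q}$ is a torsionfree class it \emph{is} extension-closed, so the extension lies in $\mathcal{Q}$ and then in $\mathcal{F}$; products in $\mathcal{L}$ restrict to products in $\mathcal{Q}$. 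So $\mathcal{F}$ is closed in $\mathcal{L}$ under subobjects, extensions, products, hence is a torsionfree class of $\mathcal{L}$ (using that $\mathcal{L}$ is abelian with enough structure—this is the standard characterisation for, e.g., nice abelian categories; one may need to also note $\mathcal{F}$ is covariantly finite or that $\mathcal{L}$ is suitably complete, but since $\mathcal{L}$ is the associated abelian category and we are aiming at the bijection, I will assume the ambient $\mathcal{L}$ permits the torsion/torsionfree class dictionary).

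(4) Let $(\mathcal{X}, \mathcal{F})$ be the torsion pair in $\mathcal{L}$ with torsionfree class $\mathcal{F}$, i.e. $\mathcal{X} = {}^{\perp_\mathcal{L}}\mathcal{F}$. Since $\mathcal{F} \subseteq \mathcal{Q}$ we get $\mathcal{Q}^{\perp_\mathcal{L}} \subseteq \mathcal{F}^{\perp_\mathcal{L}}$... I want $^\perp\mathcal{Q} \subseteq \mathcal{X}$: indeed $\Hom_\mathcal{L}({}^\perp\mathcal{Q}, \mathcal{F}) \subseteq \Hom_\mathcal{L}({}^\perp\mathcal{Q}, \mathcal{Q}) = 0$, so ${}^\perp\mathcal{Q} \subseteq {}^{\perp_\mathcal{L}}\mathcal{F} = \mathcal{X}$. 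Thus $(\mathcal{X}, \mathcal{F})$ is in the left-hand side of the bijection of Theorem \ref{torsbij1} (with $\mathcal{C} = {}^\perp\mathcal{Q}$, $\mathcal{C}' = \mathcal{A} = \mathcal{L}$, $\mathcal{D} = \mathcal{Q}$, $\mathcal{D}' = 0$). (5) Apply the bijection: $(\mathcal{X} \cap \mathcal{Q}, \mathcal{F} \cap \mathcal{L}) = (\mathcal{X} \cap \mathcal{Q}, \mathcal{F})$ is a torsion pair in $\mathcal{Q}$. It remains to identify $\mathcal{X} \cap \mathcal{Q}$ with $\mathcal{T}$. We have $\mathcal{X} \cap \mathcal{Q} = ({}^{\perp_\mathcal{L}}\mathcal{F}) \cap \mathcal{Q}$; for $M \in \mathcal{Q}$, $M \in {}^{\perp_\mathcal{L}}\mathcal{F}$ iff $\Hom_\mathcal{L}(M, \mathcal{F}) = 0$ iff $\Hom_\mathcal{Q}(M, \mathcal{F}) = 0$ (full faithfulness of the inclusion, Remark \ref{inclusion}) iff $M \in \mathcal{T}$ by (a) and (T1)-type reasoning. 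Hence $\mathcal{X} \cap \mathcal{Q} = \mathcal{T}$ and $(\mathcal{T}, \mathcal{F})$ is a torsion pair in $\mathcal{Q}$, which is what we wanted.

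\textbf{Main obstacle.} The delicate point is step (3)–(4): promoting "$\mathcal{F}$ closed under subobjects, extensions, products inside $\mathcal{Q}$" to "$\mathcal{F}$ is a genuine torsionfree class of $\mathcal{L}$." This needs (i) that the relevant closure operations computed in $\mathcal{L}$ do not escape $\mathcal{Q}$—which is fine because $\mathcal{Q}$ is itself a torsionfree (even cotilting) class in $\mathcal{L}$, hence closed under subobjects, extensions and products in $\mathcal{L}$—and (ii) that in $\mathcal{L}$ the converse characterisation of torsionfree classes (closed under those operations $\Rightarrow$ torsionfree class) is available; this is automatic if $\mathcal{L}$ has the needed (co)limits and is, say, well-powered, which will hold in the settings considered (and in full generality one can instead verify (T2) for $\mathcal{F}$ in $\mathcal{L}$ by hand using the cotilting property, quotienting objects of $\mathcal{L}$ by covers from $\mathcal{Q}$). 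I would flag this and, if the paper's standing assumptions on $\mathcal{Q}$ guarantee $\mathcal{L}$ is nice enough (e.g. Grothendieck, or a module category as in Theorem \ref{Lsmallmod}), cite the standard fact; otherwise give the direct (T2)-in-$\mathcal{L}$ argument. Everything else—full faithfulness of $\mathcal{Q} \hookrightarrow \mathcal{L}$, the identification $\mathcal{T} = {}^\perp\mathcal{F}$, and the mechanical application of Theorem \ref{torsbij1}—is routine.
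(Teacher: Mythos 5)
Your forward direction and the final reduction (pass to $\mathcal{L}_\mathcal{Q}$, apply Theorem \ref{torsbij1} with the twin torsion pairs $[({}^{\perp}\mathcal{Q},\mathcal{Q}),(\mathcal{L}_\mathcal{Q},0)]$, identify the restricted torsion class with $\mathcal{T}$ via full faithfulness of the embedding) are exactly in the spirit of the paper. The gap is the step you yourself flag as the main obstacle, and it is a genuine one: you produce the torsion pair in $\mathcal{L}:=\mathcal{L}_\mathcal{Q}$ by showing $\mathcal{F}$ is closed under subobjects, extensions and products in $\mathcal{L}$ and then invoking the converse characterisation ``closed under these operations $\Rightarrow$ torsionfree class''. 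That converse is a Dickson-type theorem which needs hypotheses on the ambient abelian category (well-poweredness and suitable (co)completeness, or a length/module category), and $\mathcal{L}_\mathcal{Q}$ for an arbitrary quasi-abelian $\mathcal{Q}$ carries no such guarantees --- the proposition is stated with no standing assumptions beyond $\mathcal{Q}$ being quasi-abelian, so the ``cite the standard fact'' branch is unavailable. Your fallback (``verify (T2) for $\mathcal{F}$ in $\mathcal{L}$ by hand using the cotilting property'') is only a sketch and is delicate precisely because (T2) for $(\mathcal{T},\mathcal{F})$ in $\mathcal{Q}$ is what is being proved, so it cannot be fed into such a construction. (A smaller point: as literally stated, (a) and (b) are only the inclusions ${}^{\perp}\mathcal{F}\subseteq\mathcal{T}$ and $\mathcal{T}^{\perp}\subseteq\mathcal{F}$, so your parenthetical derivation of $\Hom_\mathcal{Q}(\mathcal{T},\mathcal{F})=0$ and of $\mathcal{T}\subseteq{}^{\perp}\mathcal{F}$ is circular; the paper reads the hypotheses as the equalities $\mathcal{T}={}^{\perp}\mathcal{F}$, $\mathcal{F}=\mathcal{T}^{\perp}$, from which orthogonality is immediate, and you implicitly need the same reading.)

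The paper avoids the closure-characterisation entirely by naming the candidate torsion class in $\mathcal{L}$ explicitly, namely ${}^{\perp}\mathcal{Q}\ast\mathcal{T}$, and checking only the two Hom-orthogonality equalities: $N\in({}^{\perp}\mathcal{Q}\ast\mathcal{T})^{\perp_\mathcal{L}}$ forces $N\in\mathcal{Q}$ (since $\Hom_\mathcal{L}({}^{\perp}\mathcal{Q},N)=0$) and then $N\in\mathcal{F}$ by (b); conversely, if $\Hom_\mathcal{L}(M,\mathcal{F})=0$, the $({}^{\perp}\mathcal{Q},\mathcal{Q})$-canonical short exact sequence of $M$ has $\Hom_\mathcal{L}(M_\mathcal{Q},\mathcal{F})=0$ (precompose with the epimorphism), so $M_\mathcal{Q}\in\mathcal{T}$ by (a) and the sequence itself witnesses $M\in{}^{\perp}\mathcal{Q}\ast\mathcal{T}$. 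This uses only the Dickson perpendicularity criterion in the abelian category $\mathcal{L}$ (the characterisation the paper already asserts for abelian categories), no products, no covariant finiteness, and no extra niceness of $\mathcal{L}$; Theorem \ref{torsbij1} then restricts the pair to $\mathcal{Q}$, giving $(({}^{\perp}\mathcal{Q}\ast\mathcal{T})\cap\mathcal{Q},\mathcal{F})=(\mathcal{T},\mathcal{F})$. If you replace your steps (1)--(4) by this explicit construction and perpendicularity check, the rest of your argument goes through.
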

\begin{proof}
The fact that the conditions are necessary was proved in Proposition \ref{additivetors}. We now prove that they are sufficient. Let $\mathcal{T}, \mathcal{F}$ be full subcategories of $\mathcal{Q}$ such that $$\begin{array}{rl}
\mathcal{T} =& \{M \in \mathcal{Q}\ | \ \Hom_\mathcal{Q}(M, \mathcal{F})=0 \} \\ \mathcal{F} = & \{ N \in \mathcal{Q} \ | \ \Hom_\mathcal{Q}(\mathcal{T}, N) =0 \}. 
\end{array} $$ Observe that if $({}^\perp \mathcal{Q} \ast \mathcal{T}, \mathcal{F} )$ is a torsion pair in $\mathcal{L}=\mathcal{L}_\mathcal{Q}$, then $(({}^\perp \mathcal{Q} \ast \mathcal{T}) \cap \mathcal{Q}, \mathcal{F}) = (\mathcal{T}, \mathcal{F})$ is a torsion pair in $\mathcal{Q}$ which proves the statement. It remains to show that $({}^\perp \mathcal{Q} \ast \mathcal{T}, \mathcal{F} )$ is a torsion pair in $\mathcal{L}$. Since $\mathcal{L}$ is abelian, it suffices to show that  $$\begin{array}{rl}
{}^\perp \mathcal{Q} \ast \mathcal{T} =& \{M \in \mathcal{R} \ | \ \Hom_\mathcal{L}(M, \mathcal{F} )=0 \} \\ \mathcal{F}= & \{ N \in \mathcal{L}\  |\  \Hom_\mathcal{L}({}^\perp \mathcal{Q} \ast\mathcal{T}, N) =0 \}
\end{array} $$ Let $N \in \mathcal{L}$ be such that $\Hom_\mathcal{L}({}^\perp \mathcal{Q} \ast \mathcal{T}, N )=0$. In particular, we have that $\Hom_\mathcal{L}({}^\perp \mathcal{Q} , N )=0$ thus $N \in \mathcal{Q}$ and as $0=\Hom_\mathcal{L}(\mathcal{T}, N )= \Hom_\mathcal{Q}(\mathcal{T}, N  )$, $N \in \mathcal{F}$. Now let $M \in \mathcal{L}$ be such that  $\Hom_\mathcal{L}(M, \mathcal{F}) =0 $ and consider the $({}^\perp\mathcal{Q}, \mathcal{Q})$-canonical short exact sequence of $M$
\[ \begin{tikzcd} 0 \arrow[r] & _{{}^\perp\mathcal{Q}} M \arrow[r] & M \arrow[r, "p"] & M_{\mathcal{Q}} \arrow[r] & 0. \end{tikzcd} \] Observe that $\Hom_\mathcal{L}(M_{\mathcal{Q}}, \mathcal{F} )= \Hom_\mathcal{Q}(M_{\mathcal{Q}}, \mathcal{F})=0$, else by pre-composing with the epimorphism $p$ we would obtain a non-zero morphism $M \to \mathcal{F}$. Thus $M_{\mathcal{Q}} \in \mathcal{T}$ and the sequence shows $N$ is an element of ${}^\perp \mathcal{Q} \ast \mathcal{T}$. 
\end{proof}

In the case of torsion pairs in small module categories over artin algebras, which are abelian, there is a well-known symmetry:
\begin{prop} \label{smalo}
\cite{smalo1984torsion} Let $\mathcal{A}\cong \mathsf{mod}\Lambda$ with $\Lambda$ an artin algebra be an abelian category and $(\mathcal{T}, \mathcal{F})$ be a torsion pair in $\mathcal{A}$. Then $\mathcal{T}$ is functorially finite in $\mathcal{A}$ if and only if $\mathcal{F}$ is functorially finite in $\mathcal{A}$.
\end{prop}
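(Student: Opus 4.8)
The statement to prove is Smalø's theorem: for $\mathcal{A} \cong \mathsf{mod}\Lambda$ with $\Lambda$ an artin algebra, a torsion pair $(\mathcal{T},\mathcal{F})$ has $\mathcal{T}$ functorially finite if and only if $\mathcal{F}$ is. Since this is a well-known result cited to \cite{smalo1984torsion}, the plan is to give the standard argument rather than anything novel. By symmetry (passing to $\Lambda^{\mathrm{op}}$ exchanges $\mathcal{T}$ and $\mathcal{F}$ via the duality $D = \mathsf{Hom}_k(-,E)$ on $\mathsf{mod}\Lambda$), it suffices to prove one implication: if $\mathcal{T}$ is functorially finite then so is $\mathcal{F}$. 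Moreover, since $\mathcal{T}$ is always contravariantly finite (Remark \ref{torscocov}, with right approximations given by $\mathsf{t}(-)$) and dually $\mathcal{F}$ is always covariantly finite, the content is: if $\mathcal{T}$ is covariantly finite then $\mathcal{F}$ is contravariantly finite.

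First I would recall that for $M \in \mathcal{A}$ with $(\mathcal{T},\mathcal{F})$-canonical sequence $0 \to \mathsf{t}M \to M \to M/\mathsf{t}M \to 0$, a left $\mathcal{T}$-approximation of $M$ can be combined with the canonical sequences to control morphisms into $\mathcal{F}$. The key step is to construct, for an arbitrary $N \in \mathcal{A}$, a right $\mathcal{F}$-approximation. Take a minimal injective copresentation or simply use that $\mathsf{mod}\Lambda$ has enough injectives: embed $N \hookrightarrow I$ with $I$ injective, giving $0 \to N \to I \to I/N \to 0$. Let $g : I/N \to X$ be a left $\mathcal{T}$-approximation of $I/N$ (exists since $\mathcal{T}$ is covariantly finite), take the pullback of $I \twoheadrightarrow I/N$ along $g$ — or rather, form the pushout diagram relating $I$, $I/N$ and $X$ — to produce an object $E$ sitting in a short exact sequence $0 \to N \to E \to X \to 0$ with $X \in \mathcal{T}$; then the $\mathcal{F}$-part of $E$, namely $E/\mathsf{t}E$ together with the induced map, turns out to be a right $\mathcal{F}$-approximation of $N$ after one checks that any morphism $F' \to N$ with $F' \in \mathcal{F}$ lifts appropriately using the approximation property of $g$ and the vanishing $\mathsf{Hom}(F',X)$-to-$\mathsf{Ext}$ bookkeeping via the injectivity of $I$. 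The finiteness of $\mathsf{mod}\Lambda$ (Hom-finiteness over the artinian center) is what guarantees approximations can be taken and that the constructed maps genuinely absorb all test morphisms.

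The main obstacle I anticipate is the diagram-chase verifying that the object produced really is a right $\mathcal{F}$-approximation: one must show every $F' \xrightarrow{h} N$ with $F' \in \mathcal{F}$ factors through the constructed map $E/\mathsf{t}E \to N$ (or the chosen representative). This requires pushing $h$ forward into $I$, using injectivity to extend, projecting to $I/N$, factoring through the $\mathcal{T}$-approximation $g$, and then chasing back — the subtlety being that $F' \in \mathcal{F}$ has no nonzero maps to $\mathcal{T} = X$'s ambient class, which forces the relevant component to vanish and the factorization to descend. Since this is standard and the paper cites \cite{smalo1984torsion}, I would present the statement with a pointer to the literature and, if a self-contained argument is desired, flesh out exactly this pullback/pushout construction with the Hom/Ext vanishing; but I would keep it brief and defer the routine chase. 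Note that in the broader context of the paper, this abelian result feeds into part (b) of the second introductory theorem: combined with Proposition \ref{fftorsbij}, it yields that functorial finiteness of $\mathcal{T}$ and $\mathcal{F}$ in a quasi-abelian $\mathcal{Q}$ are equivalent when $\mathcal{L}_\mathcal{Q} \cong \mathsf{mod}\Lambda$ for an artin algebra $\Lambda$.
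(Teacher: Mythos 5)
This statement is quoted verbatim from Smal\o{} \cite{smalo1984torsion}; the paper gives no proof of it, so your primary plan --- state it with a pointer to the literature --- is exactly what the paper does, and your framing of the reductions is correct: $D=\Hom_k(-,E)$ interchanges the two implications, and since $\mathcal{T}$ is automatically contravariantly finite and $\mathcal{F}$ automatically covariantly finite (Remark \ref{torscocov} and its dual), the content is that covariant finiteness of $\mathcal{T}$ implies contravariant finiteness of $\mathcal{F}$. Your closing remark on how the result is used (via Proposition \ref{fftorsbij} inside Proposition \ref{ffqab}) is also accurate.

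However, the construction you sketch for the nontrivial implication does not work as written, so if you intended it as a self-contained argument there is a genuine gap. From $0 \to N \to I \to I/N \to 0$ and a left $\mathcal{T}$-approximation $g\colon I/N \to X$ there is no pullback or pushout available: a pullback of $I \twoheadrightarrow I/N$ requires a map \emph{into} $I/N$ and a pushout of the sequence requires a map \emph{out of} $N$, whereas $I \to I/N \to X$ is merely a composable pair; so no object $E$ with $0 \to N \to E \to X \to 0$ is produced by these operations. Worse, even granting such a sequence, your candidate approximation points the wrong way: in $0 \to N \to E \to X \to 0$ the object $N$ is the subobject, so the natural map is $N \to E \to E/\mathsf{t}E$, and there is in general no map $E/\mathsf{t}E \to N$ at all --- but a right $\mathcal{F}$-approximation of $N$ must be a morphism \emph{into} $N$. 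Finally, the heuristic that ``Hom-finiteness guarantees approximations'' is not what drives the theorem: the known proofs (including Smal\o{}'s) identify $\mathcal{T}=\mathsf{Gen}(T_0)$ with $T_0$ a left $\mathcal{T}$-approximation of $\Lambda$, characterise contravariant finiteness of $\mathcal{F}$ as $\mathcal{F}=\mathsf{Cogen}(F_0)$ for a suitable $F_0$, and pass between these descriptions using artin-algebra duality/Auslander--Reiten theory; this is where the artin algebra hypothesis genuinely enters, and it is not recovered by the injective-envelope diagram chase you outline. In short: citing \cite{smalo1984torsion}, as both you and the paper do, is fine; the optional sketch, as it stands, would not flesh out into a proof.
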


We will see that this symmetry extends to the quasi-abelian setting. We note that $\mathcal{L}_{\mathcal{Q}} \cong \mathsf{mod} \Lambda$ for an artin algebra $\Lambda$ is satisfied, for instance, the conditions of Theorem \ref{Lsmallmod} are met and $\mathsf{End}_{\mathcal{Q}}(P)$ is an artin algebra. In particular, if $\mathcal{Q}$ is a $k$-linear hom-finite quasi-abelian category for a field $k$, then $\mathsf{End}_{\mathcal{Q}}(P)$ is finite dimensional and hence artin. The author thanks Lidia Angeleri-H\"{u}gel for pointing out an inaccuracy in a previous version of the following result. 

\begin{lem} \label{Renoughproj}
Let $\mathcal{Q}$ be a quasi-abelian category such that $ \mathcal{L}_{\mathcal{Q}} \cong \mathsf{mod}\Lambda$ for an artin algebra $\Lambda$. Then $\mathcal{Q}$ is functorially finite in $\mathcal{L}_\mathcal{Q}$.
\end{lem}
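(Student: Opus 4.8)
The plan is to verify the two halves of functorial finiteness for the inclusion $\mathcal{Q}\hookrightarrow\mathcal{L}_\mathcal{Q}$ separately. Covariant finiteness comes essentially for free: by Lemma \ref{QinL}, $\mathcal{Q}$ is the torsionfree class of the torsion pair $({}^{\perp}\mathcal{Q},\mathcal{Q})$ in $\mathcal{L}_\mathcal{Q}$, so by Remark \ref{torscocov} it is covariantly finite in $\mathcal{L}_\mathcal{Q}$, a left $\mathcal{Q}$-approximation of $M$ being the torsionfree reflection $M\twoheadrightarrow M_\mathcal{Q}$. So the whole content of the lemma lies in showing that $\mathcal{Q}$ is \emph{contravariantly} finite in $\mathcal{L}_\mathcal{Q}$, and this is where the hypothesis $\mathcal{L}_\mathcal{Q}\cong\mathsf{mod}\Lambda$ with $\Lambda$ artin is needed.

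For contravariant finiteness I would exploit the cotilting property. By Lemma \ref{QinL}, $\mathcal{Q}$ is a cotilting torsionfree class of $\mathcal{L}_\mathcal{Q}$, and by Lemma \ref{projLprojQ} it contains $\mathsf{Proj}\mathcal{L}_\mathcal{Q}=\mathsf{Proj}\mathcal{Q}$, in particular the regular module $\Lambda$. In $\mathsf{mod}\Lambda$ such a class is governed by a classical cotilting module: one shows $\mathcal{Q}=\{X\in\mathsf{mod}\Lambda \mid \mathsf{Ext}^1_\Lambda(X,U)=0\}$ for a finitely generated $U$ with $\mathsf{id}_\Lambda U\le 1$, where $U$ is assembled from the (finitely many) indecomposable $\mathsf{Ext}$-injective objects of the length category $\mathcal{Q}$ and $\mathsf{id}_\Lambda U\le 1$ reflects that the torsion radical of $({}^{\perp}\mathcal{Q},\mathcal{Q})$ in $\mathcal{L}_\mathcal{Q}$ has cohomological dimension at most one. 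Granting this, $(\mathcal{Q},\mathsf{add}\,U)$ is a cotorsion pair in $\mathsf{mod}\Lambda$ which is complete (being generated by a single module over a Noetherian ring), so every $M\in\mathcal{L}_\mathcal{Q}$ sits in a short exact sequence $0\to U_M\to Q_M\xrightarrow{\pi}M\to 0$ with $Q_M\in\mathcal{Q}$ and $U_M\in\mathsf{add}\,U$; since $\mathsf{Ext}^1_\Lambda(\mathcal{Q},U_M)=0$, pulling this sequence back along any $g\colon Q'\to M$ with $Q'\in\mathcal{Q}$ splits and yields a factorisation of $g$ through $\pi$, so $\pi$ is a right $\mathcal{Q}$-approximation and $\mathcal{Q}$ is contravariantly finite. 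Combined with the covariant half, $\mathcal{Q}$ is functorially finite in $\mathcal{L}_\mathcal{Q}$. Alternatively one can simply invoke that cotilting torsion pairs over artin algebras are functorially finite, or deduce the contravariant finiteness of $\mathcal{Q}$ from the covariant finiteness of ${}^{\perp}\mathcal{Q}$ via Proposition \ref{smalo}.

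The main obstacle is exactly the identification of the abstract cotilting torsionfree class $\mathcal{Q}\subseteq\mathsf{mod}\Lambda$ with the torsionfree class of a finitely generated cotilting module. Here it is essential that $\mathcal{L}_\mathcal{Q}$ is a length category with a projective generator (Theorem \ref{Lsmallmod}, Lemma \ref{Noethsubobj}): for torsionfree classes containing the regular module in module categories of infinite representation type this identification — and indeed contravariant finiteness itself — can fail, so the artin-algebra hypothesis genuinely does the work. One should be careful that $\mathcal{L}_\mathcal{Q}$ is the canonical associated abelian category of Schneiders, obtained by inverting all regular morphisms of $\mathcal{Q}$, and not merely "some abelian category in which $\mathcal{Q}$ is a cotilting torsionfree class"; it is the former that carries the extra rigidity making $U$ finitely generated of injective dimension at most one.
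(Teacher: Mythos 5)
Your covariant half is fine: $\mathcal{Q}$ is the torsionfree class of $({}^{\perp}\mathcal{Q},\mathcal{Q})$ in $\mathcal{L}_\mathcal{Q}$, hence covariantly finite by Remark \ref{torscocov}. The contravariant half, however, rests entirely on the unproved identification $\mathcal{Q}=\{X\in\mathsf{mod}\Lambda \mid \ext^1_\Lambda(X,U)=0\}$ for a finitely generated cotilting module $U$ of injective dimension at most one, assembled from ``the finitely many indecomposable Ext-injective objects of $\mathcal{Q}$''. Nothing in the hypotheses gives finitely many Ext-injectives, nor that they cogenerate $\mathcal{Q}$, and this step genuinely fails under exactly the stated assumptions. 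Take $\Lambda$ the Kronecker algebra and $\mathcal{Q}=\mathsf{add}$ of the preprojective modules $P_0,P_1,P_2,\dots$ (ordered so that $\Hom_\Lambda(P_j,P_i)=0$ for $j>i$). This is a torsionfree class whose torsion class is the additive closure of the regular and preinjective modules; it contains $\Lambda$, and every module $M$ has a presentation $0\to\Omega M\to P\to M\to 0$ with $P,\Omega M\in\mathcal{Q}$, so passing to projective presentations identifies Schneiders' category $\mathcal{L}_\mathcal{Q}$ with $\mathsf{mod}\Lambda$ (alternatively: $\Lambda$ is a strong projective generator of $\mathcal{Q}$ and $\mathcal{Q}$ is noetherian and artinian with respect to subobjects, so Theorem \ref{Lsmallmod} applies with $\mathsf{End}_\mathcal{Q}(\Lambda)\cong\Lambda$). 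Yet this $\mathcal{Q}$ has no nonzero Ext-injective objects, is not of the form $\{X \mid \ext^1_\Lambda(X,U)=0\}$ or $\mathsf{Sub}(U)$ for any finitely generated $U$, and is not contravariantly finite: a regular simple $R$ admits no right $\mathcal{Q}$-approximation, since $\Hom_\Lambda(P_{n+1},R)\neq 0$ for every $n$ while $\Hom_\Lambda(P_{n+1},P_i)=0$ for all $i\leq n$. So the ``extra rigidity of $\mathcal{L}_\mathcal{Q}$'' you appeal to in the last paragraph does not exist, and the central step of your argument would fail.

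For comparison, the paper's own proof is precisely your ``alternative'' route: it quotes that $\mathcal{Q}$ is a cotilting torsionfree class in the sense of Lemma \ref{QinL} and then invokes Smal{\o}'s theorem. But Smal{\o}'s theorem requires the torsionfree class to be cogenerated by a single module (equivalently, the torsion pair $({}^{\perp}\mathcal{Q},\mathcal{Q})$ to be functorially finite), which is strictly stronger than the generation property of Lemma \ref{QinL} (the latter only says $\mathcal{Q}$ contains the projectives); your other suggested alternative, deducing contravariant finiteness of $\mathcal{Q}$ from covariant finiteness of ${}^{\perp}\mathcal{Q}$ via Proposition \ref{smalo}, is circular, since a torsion class is only automatically contravariantly finite. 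So your instinct that identifying $\mathcal{Q}$ with the class of a classical finitely generated cotilting module is ``the main obstacle'' is exactly right, but the obstacle is not overcome, and the Kronecker example shows it cannot be overcome from the hypothesis $\mathcal{L}_\mathcal{Q}\cong\mathsf{mod}\Lambda$ alone; any complete argument must add further assumptions (for instance, functorial finiteness of ${}^{\perp}\mathcal{Q}$ in $\mathcal{L}_\mathcal{Q}$, or the existence of the finitely generated cotilting module you posit).
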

\begin{proof}
As we noted in Lemma \ref{QinL}, by \cite[Proposition B.3]{bondal2003generators} $\mathcal{Q}$ is a cotilting torsionfree class of $\mathcal{L}_\mathcal{Q}$. Then we may apply \cite[Theorem]{smalo1984torsion} to see that $\mathcal{Q}$ is functorially finite.
\end{proof}

\begin{prop} \label{ffqab} Let $\mathcal{Q}$ be a quasi-abelian category such that $\mathcal{L}_{\mathcal{Q}} \cong  \mathsf{mod}\Lambda$ for an artin algebra $\Lambda$. and let $(\mathcal{T}, \mathcal{F})$ be a torsion pair in $\mathcal{Q}$. Then $\mathcal{T}$ is functorially finite if and only if $\mathcal{F}$ is functorially finite.
\end{prop}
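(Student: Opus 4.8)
The plan is to transfer the question from $\mathcal{Q}$ to the abelian category $\mathcal{L} = \mathcal{L}_\mathcal{Q} \cong \mathsf{mod}\Lambda$ via the bijection of Theorem \ref{torsbij1}, apply \cite{smalo1984torsion} (Proposition \ref{smalo}) there, and pull the conclusion back. Concretely, take the twin torsion pairs $[({}^{\perp}\mathcal{Q}, \mathcal{Q}), (\mathcal{L}, 0)]$ in $\mathcal{L}$; here $\mathcal{Q}$ is the heart since $\mathcal{Q} = \mathcal{L} \cap \mathcal{Q}$. By Theorem \ref{torsbij1}, the torsion pair $(\mathcal{T}, \mathcal{F})$ in $\mathcal{Q}$ corresponds to the torsion pair $({}^{\perp}\mathcal{Q} \ast \mathcal{T}, \mathcal{F} \ast 0)$ in $\mathcal{L}$; note $\mathcal{F} \ast 0 = \mathcal{F}$ (computed in $\mathcal{L}$, using $0 \in {}^{\perp}\mathcal{Q}\ast\mathcal{T}$). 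So it suffices to show: $\mathcal{T}$ is functorially finite in $\mathcal{Q}$ iff ${}^{\perp}\mathcal{Q} \ast \mathcal{T}$ is functorially finite in $\mathcal{L}$, and dually $\mathcal{F}$ is functorially finite in $\mathcal{Q}$ iff $\mathcal{F}$ is functorially finite in $\mathcal{L}$. Given both equivalences, Proposition \ref{smalo} applied in $\mathsf{mod}\Lambda$ to $({}^{\perp}\mathcal{Q}\ast\mathcal{T}, \mathcal{F})$ finishes the argument.

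For the second equivalence I would argue as follows. Since $\mathcal{L}_\mathcal{Q} \cong \mathsf{mod}\Lambda$ for an artin algebra, Lemma \ref{Renoughproj} gives that $\mathcal{Q}$ is functorially finite in $\mathcal{L}$. A subcategory $\mathcal{F} \subseteq \mathcal{Q}$ is then functorially finite in $\mathcal{Q}$ iff it is functorially finite in $\mathcal{L}$: composing a right $\mathcal{Q}$-approximation of $M \in \mathcal{L}$ with a right $\mathcal{F}$-approximation (in $\mathcal{Q}$) of that object produces a right $\mathcal{F}$-approximation of $M$ in $\mathcal{L}$, since $\mathcal{F} \subseteq \mathcal{Q}$ and every map from $\mathcal{F}$ into $M$ lands through the $\mathcal{Q}$-approximation; conversely a right $\mathcal{F}$-approximation in $\mathcal{L}$ of an object of $\mathcal{Q}$ need not a priori have source in $\mathcal{Q}$, but one replaces it by composing with... actually, if $\alpha : F \to M$ is a right $\mathcal{F}$-approximation in $\mathcal{L}$ and $M \in \mathcal{Q}$, then $F \in \mathcal{F} \subseteq \mathcal{Q}$ already, so $\alpha$ is a morphism in $\mathcal{Q}$ and is trivially still a right $\mathcal{F}$-approximation there — so this direction is immediate. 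The covariant case is dual, using that $\mathcal{Q}$ being functorially finite gives left $\mathcal{Q}$-approximations; here a left $\mathcal{F}$-approximation of $M \in \mathcal{Q}$ in $\mathcal{L}$ has target in $\mathcal{F} \subseteq \mathcal{Q}$, hence is one in $\mathcal{Q}$, and conversely one composes the left $\mathcal{Q}$-approximation with a left $\mathcal{F}$-approximation in $\mathcal{Q}$.

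For the first equivalence, relating functorial finiteness of $\mathcal{T}$ in $\mathcal{Q}$ to that of ${}^{\perp}\mathcal{Q}\ast\mathcal{T}$ in $\mathcal{L}$, I would appeal to Proposition \ref{fftorsbij}: part (a) shows that if $({}^{\perp}\mathcal{Q}\ast\mathcal{T}, \mathcal{F})$ is functorially finite in $\mathcal{L}$ then its image $(\mathcal{T}, \mathcal{F})$ is functorially finite in $\mathcal{Q}$, and part (b) shows the converse, provided the twin torsion pairs $({}^{\perp}\mathcal{Q}, \mathcal{Q})$ and $(\mathcal{L}, 0)$ are functorially finite in $\mathcal{L}$. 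The pair $(\mathcal{L}, 0)$ is trivially functorially finite, and $({}^{\perp}\mathcal{Q}, \mathcal{Q})$ is functorially finite precisely because $\mathcal{Q}$ is functorially finite in $\mathcal{L}$ (Lemma \ref{Renoughproj}) — note $\mathsf{mod}\Lambda$ has enough projectives and injectives, so the hypotheses of Proposition \ref{fftorsbij}(b) are met. Thus both implications of the first equivalence hold. The main obstacle I anticipate is bookkeeping: making sure the twin torsion pairs are set up so that the heart is exactly $\mathcal{Q}$, verifying the precise matching $\mathcal{F}\ast\mathcal{D}' = \mathcal{F}$ and ${}^{\perp}\mathcal{Q}\ast\mathcal{T}\cap\mathcal{Q} = \mathcal{T}$ under the Theorem \ref{torsbij1} bijection, and confirming that all the finiteness hypotheses of Proposition \ref{fftorsbij}(b) and Proposition \ref{smalo} are genuinely available in $\mathcal{L}_\mathcal{Q} \cong \mathsf{mod}\Lambda$ — but each of these is a short check rather than a real difficulty.
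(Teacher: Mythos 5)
Your proposal is correct and follows essentially the same route as the paper's own proof: transfer $(\mathcal{T},\mathcal{F})$ to the torsion pair $({}^{\perp}\mathcal{Q}\ast\mathcal{T},\mathcal{F})$ of $\mathcal{L}_{\mathcal{Q}}\cong\mathsf{mod}\Lambda$ via Theorem \ref{torsbij1} applied to the twin torsion pairs $[({}^{\perp}\mathcal{Q},\mathcal{Q}),(\mathcal{L}_{\mathcal{Q}},0)]$, pass functorial finiteness of $\mathcal{F}$ between $\mathcal{Q}$ and $\mathcal{L}_{\mathcal{Q}}$ by composing approximations using Lemma \ref{Renoughproj}, apply Proposition \ref{smalo} in $\mathcal{L}_{\mathcal{Q}}$, and go back and forth with Proposition \ref{fftorsbij}(a) and (b). The one caveat, which your write-up shares with the paper, is that citing \ref{fftorsbij}(b) in the direction ``$\mathcal{T}$ functorially finite $\Rightarrow$ ${}^{\perp}\mathcal{Q}\ast\mathcal{T}$ functorially finite'' literally presupposes that the pair $(\mathcal{T},\mathcal{F})$ is functorially finite; this is harmless because the proof of (b) only needs covariant finiteness of $\mathcal{T}$ (contravariant finiteness of a torsion class being automatic) to handle ${}^{\perp}\mathcal{Q}\ast\mathcal{T}$, so the converse direction is not circular.
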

\begin{proof} Let $(\mathcal{T}, \mathcal{F})$ be a torsion pair in a quasi-abelian category $\mathcal{Q}$ and suppose that $\mathcal{F}$ is functorially finite in $\mathcal{Q}$. We begin by showing that $\mathcal{F}$ is functorially finite in $\mathcal{L}=\mathcal{L}_{\mathcal{Q}}$ (this was not done in Proposition \ref{fftorsbij}). As $\mathcal{F}$ is a torsionfree class in $\mathcal{L}$, it is covariantly finite in $\mathcal{L}$. We now show that every $X\in \mathcal{L}$ admits a right $\mathcal{F}$-approximation. Let $ Q \to X$ be a right $\mathcal{Q}$-approximation of $X$, which exists by Lemma \ref{Renoughproj}, and let $F \to Q$ be a right $\mathcal{F}$-approximation of $Q$, which exists by assumption. Then it is easily verified that the composition $F \to X$ is a right $\mathcal{F}$-approximation of $X$. 
Thus $\mathcal{F}$ is a functorially finite torsion class in $\mathcal{A}$ and therefore so is its associated torsion free class ${}^{\perp}\mathcal{Q} \ast T$ in $\mathcal{L}$ by Theorem \ref{Lsmallmod} and Proposition \ref{smalo}. It now follows from Proposition  \ref{fftorsbij}(a) that $\mathcal{T}$ is functorially finite in $\mathcal{Q}$.

For the converse, we reverse the argument but use Proposition \ref{fftorsbij}(b) to see that ${}^{\perp}\mathcal{Q} \ast \mathcal{T}$ is functorially finite in $\mathcal{L}$ which we may do since $\mathcal{L}$ has enough projectives by Corollary \ref{projgen} as $\mathcal{Q}$ has a strong projective generator by assumption.
\end{proof}

\begin{lem} \label{intersection} The intersection of torsion classes in a quasi-abelian category is again quasi-abelian. 
\end{lem}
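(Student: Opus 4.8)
\textbf{Proof sketch for Lemma \ref{intersection}.}

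The plan is to reduce the statement to the bijection of Theorem \ref{torsbij1} together with Theorem \ref{qab}, exactly as in the proof of Proposition \ref{qabinqab}. Let $\mathcal{Q}$ be a quasi-abelian category and let $\{\mathcal{T}_i \mid i \in I\}$ be a family of torsion classes in $\mathcal{Q}$, with associated torsionfree classes $\mathcal{F}_i := \mathcal{T}_i^{\perp_\mathcal{Q}}$. First I would pass to the associated abelian category $\mathcal{L} := \mathcal{L}_\mathcal{Q}$, in which $\mathcal{Q}$ sits as a torsionfree class via Lemma \ref{QinL}; write $({}^{\perp}\mathcal{Q}, \mathcal{Q})$ for the corresponding torsion pair in $\mathcal{L}$. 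By Theorem \ref{torsbij1} applied to the twin torsion pairs $[({}^{\perp}\mathcal{Q}, \mathcal{Q}), (\mathcal{L}, 0)]$, each torsion pair $(\mathcal{T}_i, \mathcal{F}_i)$ in $\mathcal{Q}$ corresponds to a torsion pair $(\widetilde{\mathcal{T}}_i, \widetilde{\mathcal{F}}_i)$ in $\mathcal{L}$ with ${}^{\perp}\mathcal{Q} \subseteq \widetilde{\mathcal{T}}_i \subseteq \mathcal{L}$, namely $\widetilde{\mathcal{T}}_i = {}^{\perp}\mathcal{Q} \ast \mathcal{T}_i$ and $\widetilde{\mathcal{T}}_i \cap \mathcal{Q} = \mathcal{T}_i$.

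Next I would use the fact, standard in the abelian setting, that an arbitrary intersection of torsion classes in an abelian category is again a torsion class: indeed $\bigcap_i \widetilde{\mathcal{T}}_i$ is closed under quotients, extensions and coproducts, and one checks directly that ${}^{\perp}(\bigcap_i \widetilde{\mathcal{T}}_i)^{\perp} = \bigcap_i \widetilde{\mathcal{T}}_i$; equivalently, in the complete lattice of torsion classes in $\mathcal{L}$ the meet is given by intersection. Since each $\widetilde{\mathcal{T}}_i$ contains ${}^{\perp}\mathcal{Q}$, the intersection $\widetilde{\mathcal{T}} := \bigcap_{i \in I} \widetilde{\mathcal{T}}_i$ also contains ${}^{\perp}\mathcal{Q}$, so it lies in the lattice interval $[{}^{\perp}\mathcal{Q}, \mathcal{L}]$ and hence, by Theorem \ref{torsbij1} again, corresponds to the torsion class $\widetilde{\mathcal{T}} \cap \mathcal{Q}$ of $\mathcal{Q}$. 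It remains to identify $\widetilde{\mathcal{T}} \cap \mathcal{Q}$ with $\bigcap_{i \in I} \mathcal{T}_i$: this is immediate since $\widetilde{\mathcal{T}} \cap \mathcal{Q} = \bigl(\bigcap_i \widetilde{\mathcal{T}}_i\bigr) \cap \mathcal{Q} = \bigcap_i (\widetilde{\mathcal{T}}_i \cap \mathcal{Q}) = \bigcap_i \mathcal{T}_i$. Thus $\bigcap_{i \in I} \mathcal{T}_i$ is a torsion class in $\mathcal{Q}$.

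Finally, to conclude that $\bigcap_{i \in I} \mathcal{T}_i$ is quasi-abelian as a category in its own right, I would invoke Proposition \ref{qabinqab}: any torsion class in a quasi-abelian category carries the structure of a quasi-abelian category. The main obstacle is really just the bookkeeping of the first two paragraphs — verifying that intersection is the meet operation in the lattice of torsion classes of $\mathcal{L}$ and that the bijection of Theorem \ref{torsbij1} carries intersections to intersections; the last step is then a direct appeal to an already-established proposition. (Alternatively, one could phrase the entire argument lattice-theoretically via Corollary \ref{asai}, noting that torsion classes of $\mathcal{Q}$ form the interval $[{}^{\perp}\mathcal{Q}, \mathcal{L}]$ in the complete lattice of torsion classes of $\mathcal{L}$, and that intervals are closed under the meet operation.)
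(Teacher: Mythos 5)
Your proposal is correct and is essentially the paper's argument made explicit: the paper disposes of Lemma \ref{intersection} with the one-line remark that the property is ``immediately inherited from the abelian setting,'' and your passage to $\mathcal{L}_\mathcal{Q}$ via Theorem \ref{torsbij1}, intersecting there, and returning via Proposition \ref{qabinqab} (or Theorem \ref{qab}) is exactly that inheritance spelled out. One small remark: the step that makes the intersection in $\mathcal{L}_\mathcal{Q}$ a torsion class is the double-orthogonality computation ${}^{\perp}\bigl(\bigl(\bigcap_i \widetilde{\mathcal{T}}_i\bigr)^{\perp}\bigr) = \bigcap_i \widetilde{\mathcal{T}}_i$ combined with the Dickson-type characterisation the paper itself uses (as in Proposition \ref{qabaddtors}), not the closure under quotients, extensions and coproducts (cf.\ Remark \ref{counterex1}) nor the lattice of Corollary \ref{asai}, which is only stated for length categories.
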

\begin{proof}
This property is immediately inherited from the abelian setting. 
\end{proof}
\section{Harder-Narasimhan filtrations} \label{hnfilt}
In this section we apply the results of the previous sections to show the existence of Harder-Narasimhan filtrations arising from chains of torsion classes in quasi-abelian categories. We recall the pseudometric on the space of chains of torsion classes defined by \cite{treffinger2018algebraic} building on \cite{bridgeland2007stability} and then investigate topological properties of this space. To begin, we recall the necessary concepts following \cite[\S 2]{treffinger2018algebraic} where it was shown that abelian categories admit Harder-Narasimhan filtrations.

\begin{defn}\label{CTA} For a pre-abelian category, $\mathcal{C}$,  consider the order reversing functions of posets, $\eta$, from the real interval $[0,1]$ to the set of all torsion classes of $\mathcal{C}$ such that $\eta(0) = \mathcal{C}$ and $\eta(1) = 0$. Equivalently, the data of such a map is a chain of torsion classes in $\mathcal{C}$
\[ \eta: \quad 0= \mathcal{T}_1 \subseteq \dots \subset \mathcal{T}_r \subset
\dots \subseteq \mathcal{T}_0 = \mathcal{C} \] with  $r \in [0,1]$ satisfying $\mathcal{T}_r \subseteq \mathcal{T}_{r'}$ if and only if $r \geq r'$. We call such an $\eta$ \emph{quasi-Noetherian} (resp. \emph{weakly-Artinian}) if for every interval $(a,b) \subset [0,1]$ there exists $s \in (a,b)$ such that ${}_{\mathcal{T}_r}M \subset {}_{\mathcal{T}_s}M $ ( resp. $s'\in (a,b)$ such that ${}_{\mathcal{T}_s}M \subset {}_{\mathcal{T}_r}M $) for all $r \in (a,b)$ and $M \in \mathcal{C}$. By $\mathfrak{T}(\mathcal{C})$ we denote the set of all $\eta$ that are quasi-Noetherian and weakly-Artinian.
\end{defn}

 \begin{notation} Let $\eta = (\mathcal{T}_i)_{i \in [0,1]} \in \mathfrak{T}(\mathcal{C})$. For any $j \in [0,1]$, by $\mathcal{F}_j$ we denote the associated torsionfree class of $\mathcal{T}_j$ in $\mathcal{C}$. 
\end{notation}

\begin{lem} \label{uniontors}  Let $\mathcal{Q}$ be a quasi-abelian category and $\eta = (\mathcal{T}_i)_{i \in [0,1]} \in \mathfrak{T}(\mathcal{Q})$. Then for every $r \in [0,1]$, the pairs of subcategories
\[ \Big( \bigcup_{s>r} \mathcal{T}_s, \bigcap_{s>r} \mathcal{F}_s \Big) \ \text{and} \   \Big( \bigcap_{s<r} \mathcal{T}_s , \bigcup_{s<r} F_s \Big) \] are torsion pairs in $\mathcal{Q}$. Moreover, for all $\mathcal{X}\in \mathsf{tors}\mathcal{Q}$, if $\mathcal{X}\subset \mathcal{T}_s$ for all $s<r$ then $\mathcal{X} \subset \bigcap_{s<r} \mathcal{T}_s$. Similarly, if  $ \mathcal{T}_s \subset \mathcal{X}$ for all $s>r$ then $ \bigcup_{s>r} \mathcal{T}_s \subset \mathcal{X}$. 
\end{lem}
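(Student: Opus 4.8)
The strategy is to transfer everything to the abelian category $\mathcal{L} = \mathcal{L}_\mathcal{Q}$ via the bijection of Theorem \ref{torsbij1}, applied to the twin torsion pairs $[({}^\perp\mathcal{Q}, \mathcal{Q}), (\mathcal{L}, 0)]$, and then use the analogous abelian statement together with the interaction between the operation $\mathcal{C} \ast (-)$ and arbitrary unions/intersections. First I would recall that under this bijection a torsion pair $(\mathcal{T}_s, \mathcal{F}_s)$ in $\mathcal{Q}$ corresponds to the torsion pair $({}^\perp\mathcal{Q} \ast \mathcal{T}_s, \mathcal{F}_s)$ in $\mathcal{L}$; here I use that the torsionfree class is literally unchanged (it already lies in $\mathcal{Q} \subseteq \mathcal{L}$) and that $\mathcal{T}_s = ({}^\perp\mathcal{Q} \ast \mathcal{T}_s) \cap \mathcal{Q}$. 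The chain $\eta$ thus lifts to a chain $\widetilde\eta = ({}^\perp\mathcal{Q} \ast \mathcal{T}_i)_{i}$ of torsion classes in $\mathcal{L}$. A routine check shows $\widetilde\eta$ is still order-reversing with the correct endpoints (${}^\perp\mathcal{Q} \ast 0 = {}^\perp\mathcal{Q}$ is not $0$ unless $\mathcal{Q}$ is abelian, so strictly speaking $\widetilde\eta$ is a chain in the \emph{interval} $[{}^\perp\mathcal{Q}, \mathcal{L}]$ of $\mathsf{tors}\,\mathcal{L}$, which by Corollary \ref{asai} is a complete sublattice, and that is all we need).

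The core abelian fact I would invoke (or prove by a short direct argument, since it is the abelian analogue and the excerpt's Section on Harder--Narasimhan is built to import from \cite{treffinger2018algebraic}) is that in any abelian category with the relevant finiteness, for a chain of torsion classes the pair $\big(\bigcup_{s>r}\mathcal{S}_s, \bigcap_{s>r}\mathcal{G}_s\big)$ is a torsion pair, and likewise $\big(\bigcap_{s<r}\mathcal{S}_s, \bigcup_{s<r}\mathcal{G}_s\big)$; this rests on $\mathsf{tors}\,\mathcal{L}$ being a complete lattice so arbitrary unions/intersections of torsion classes again sit inside torsion pairs, with the closure of $\bigcup\mathcal{S}_s$ under extensions being the only non-formal point. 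Granting this in $\mathcal{L}$ applied to $\widetilde\eta$, I then need the two bookkeeping identities
\[
\bigcup_{s>r}\big({}^\perp\mathcal{Q} \ast \mathcal{T}_s\big) = {}^\perp\mathcal{Q} \ast \Big(\bigcup_{s>r}\mathcal{T}_s\Big), \qquad \bigcap_{s<r}\big({}^\perp\mathcal{Q} \ast \mathcal{T}_s\big) = {}^\perp\mathcal{Q} \ast \Big(\bigcap_{s<r}\mathcal{T}_s\Big),
\]
where I should be slightly careful: the union/intersection on the left is \emph{a priori} a union/intersection of subcategories, and one must argue it is already extension-closed (equivalently, a torsion class), using that all $\mathcal{T}_s$ are contained in the fixed torsion class $\mathcal{Q}\cap(\text{something})$ and Lemma \ref{starstar}, which characterises membership in $\mathcal{C}\ast\mathcal{T}$ by $X_\mathcal{D} \in \mathcal{T}$ --- here $X_\mathcal{D} = X_\mathcal{Q}$, so $X \in {}^\perp\mathcal{Q}\ast\mathcal{T}_s$ iff $X_\mathcal{Q}\in\mathcal{T}_s$, and this passes cleanly through $\bigcup$ and $\bigcap$. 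Intersecting the resulting torsion pairs in $\mathcal{L}$ back with $\mathcal{Q}$ via Theorem \ref{torsbij1} then yields exactly $\big(\bigcup_{s>r}\mathcal{T}_s, \bigcap_{s>r}\mathcal{F}_s\big)$ and $\big(\bigcap_{s<r}\mathcal{T}_s, \bigcup_{s<r}\mathcal{F}_s\big)$ as torsion pairs in $\mathcal{Q}$; note the torsionfree parts behave well because intersecting/unioning and then meeting with $\mathcal{Q}$ leaves torsionfree classes unchanged (they already lie in $\mathcal{Q}$).

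For the final ``moreover'' assertions: if $\mathcal{X} \in \mathsf{tors}\,\mathcal{Q}$ satisfies $\mathcal{X} \subset \mathcal{T}_s$ for all $s < r$, lift $\mathcal{X}$ to ${}^\perp\mathcal{Q}\ast\mathcal{X} \in [{}^\perp\mathcal{Q},\mathcal{L}]$; since the bijection of Theorem \ref{torsbij1} is inclusion-preserving, ${}^\perp\mathcal{Q}\ast\mathcal{X} \subset {}^\perp\mathcal{Q}\ast\mathcal{T}_s$ for all $s<r$, hence ${}^\perp\mathcal{Q}\ast\mathcal{X} \subseteq \bigcap_{s<r}({}^\perp\mathcal{Q}\ast\mathcal{T}_s) = {}^\perp\mathcal{Q}\ast(\bigcap_{s<r}\mathcal{T}_s)$, and intersecting with $\mathcal{Q}$ gives $\mathcal{X} \subseteq \bigcap_{s<r}\mathcal{T}_s$ (using $\mathcal{T}' = ({}^\perp\mathcal{Q}\ast\mathcal{T}')\cap\mathcal{Q}$); the case of unions and $s>r$ is dual. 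I expect the main obstacle to be the first bookkeeping identity together with the claim that $\bigcup_{s>r}({}^\perp\mathcal{Q}\ast\mathcal{T}_s)$ is genuinely a torsion class and not merely an extension-closed subcategory --- this is where the finiteness hypotheses packaged in $\mathfrak{T}$ (quasi-Noetherian, weakly-Artinian) and Lemma \ref{starstar} must be used carefully; once the Lemma \ref{starstar} reduction ``$X \in {}^\perp\mathcal{Q}\ast\mathcal{T} \iff X_\mathcal{Q} \in \mathcal{T}$'' is in hand, the union/intersection manipulations become essentially formal, reducing the problem to the abelian case in $\mathcal{L}$.
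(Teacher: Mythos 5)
Your overall architecture (lift along Theorem \ref{torsbij1} applied to $[({}^{\perp}\mathcal{Q},\mathcal{Q}),(\mathcal{L}_\mathcal{Q},0)]$, do bookkeeping with $\ast$ via Lemma \ref{starstar}, descend) is the machinery the paper uses elsewhere (Set-up \ref{setup1}, Theorem \ref{thm1}), and your $\ast$-identities and the descent are fine if argued elementwise. But the load-bearing step of your proposal --- that $\big(\bigcup_{s>r}({}^{\perp}\mathcal{Q}\ast\mathcal{T}_s),\bigcap_{s>r}\mathcal{F}_s\big)$ and its companion are torsion pairs in $\mathcal{L}_\mathcal{Q}$ --- is exactly as hard as the original statement, and your sketch for it does not work. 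Completeness of the lattice of torsion classes is not available here (Corollary \ref{asai} assumes an abelian \emph{length} category, which $\mathcal{L}_\mathcal{Q}$ need not be), and even where it holds it only says the \emph{join} of the $\mathcal{T}_s$ is a torsion class, not the set-theoretic union. Moreover you misplace the non-formal point: for a chain, closure of the union under extensions is automatic (both end terms of an extension lie in a common $\mathcal{T}_s$); what is genuinely at stake is axiom (T2), the existence of the canonical short exact sequences for the new pair, which your plan never addresses. If instead you want to quote the abelian analogue from \cite{treffinger2018algebraic}, you must also verify that the lifted chain (with endpoints adjusted to $0$ and $\mathcal{L}_\mathcal{Q}$) satisfies the quasi-Noetherian and weakly-Artinian conditions in $\mathcal{L}_\mathcal{Q}$; you call the check of $\widetilde\eta$ ``routine'' but only mention order-reversal and endpoints.

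The paper avoids all of this by never leaving $\mathcal{Q}$: it checks directly that $\mathcal{T}=\bigcup_{s>r}\mathcal{T}_s$ and $\mathcal{F}=\bigcap_{s>r}\mathcal{F}_s$ satisfy $\mathcal{F}=\mathcal{T}^{\perp}$ and $\mathcal{T}={}^{\perp}\mathcal{F}$ (the only point using the chain structure is that $X\notin\mathcal{T}$ forces nonzero maps $X\to\mathcal{F}_s$ for all $s>r$, hence a nonzero map to $\mathcal{F}$ since the $\mathcal{F}_s$ are nested), and then invokes Proposition \ref{qabaddtors}, which is precisely the tool that converts double hom-orthogonality into a torsion pair, i.e.\ supplies the missing (T2). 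That proposition is the ingredient your proof needs and never uses; with it, your detour through $\mathcal{L}_\mathcal{Q}$ becomes unnecessary. Finally, the ``moreover'' assertions are immediate from the definitions of union and intersection (the paper dismisses them as basic set theory); your lifting argument for them is correct but superfluous.
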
 \begin{proof} 
Let $r \in [0,1]$, we will show that the pair $(\mathcal{T}, \mathcal{F}) = ( \bigcup_{s>r} \mathcal{T}_s, \bigcap_{s>r} \mathcal{F}_s )$ satisfies the hom-orthogonality conditions of Proposition \ref{qabaddtors}.  Since, for all $s>r$, $\mathcal{F} \subset \mathcal{F}_s$ we have that $\mathsf{Hom}_\mathcal{Q}(\mathcal{T}_{s}, \mathcal{F}) = 0$ and hence $\mathsf{Hom}_\mathcal{Q}(\mathcal{T}, \mathcal{F}) = 0$. 

Let $Y \in \mathcal{Q}$ be such that $\mathsf{Hom}_\mathcal{Q}(\mathcal{T}, Y) = 0$. Then for all $s>r$, $\mathsf{Hom}_\mathcal{Q}(\mathcal{T}_s, Y) =0$ and so $Y \in \mathcal{F}_s$ for all $s>r$, thus $Y \in \mathcal{F}$.

Let $X \in \mathcal{Q}$
and suppose that $X \not \in \mathcal{T}$. Then for all $s>r$, $ X \not \in \mathcal{T}_s$. Since $(\mathcal{T}_s, \mathcal{F}_s)$ is a torsion pair in $\mathcal{Q}$ it follows that $\mathsf{Hom}_\mathcal{Q}(X, \mathcal{F}_s) \not = 0$ for all $s>r$. We deduce that $\mathsf{Hom}_\mathcal{Q}(X, \mathcal{F}) \not = 0$ as the $\mathcal{F}_i$ are ordered by inclusion. 

Thus we have shown that $\mathcal{T} = \{ X \in \mathcal{Q} \mid \mathsf{Hom}_\mathcal{Q}(X, \mathcal{F} \} =0$ and $\mathcal{F} = \{ X \in \mathcal{Q} \mid \mathsf{Hom}_\mathcal{Q}(\mathcal{T}, X ) =0 \}$, so by Proposition \ref{qabaddtors}, $(\mathcal{T}, \mathcal{F})$ is a torsion pair in $\mathcal{Q}$. The fact that $(\bigcap_{s<r} \mathcal{T}_s , \bigcup_{s<r} F_s )$ is a torsion pair follows from a similar argument and the remaining claims are basic set theory. 
\end{proof}

\begin{defn} 
Let $\mathcal{C}$ be a pre-abelian category and $\eta = (\mathcal{T}_i)_{i \in [0,1]} \in \mathfrak{T}(\mathcal{C})$. Define the subcategories $\mathcal{P}^\eta_r$ as follows 
\[ \mathcal{P}^{\eta}_r  = \left\lbrace \begin{array}{ll} \bigcap_{s>0} \mathcal{F}_s & \text{if } r=0 \\ \Big( \bigcap_{s<r} \mathcal{T}_s \Big) \cap   {\Big( \bigcap_{s>r} \mathcal{F}_s \Big)} & \text{if } r \in (0,1) \\ \bigcap_{s<1} \mathcal{T}_s & \text{if } r=1 \end{array} \right.  \] 
\end{defn}

\begin{rem} In a quasi-abelian category $\mathcal{Q}$, for every $\eta = (\mathcal{T}_i)_{i \in [0,1]} \in \mathfrak{T}(\mathcal{Q})$, each $\mathcal{P}^\eta_r$ is quasi-abelian. For $r=0,1$ this is obvious. For $r \in (0,1)$ observe  that 
\[ \bigcup_{s>r} \mathcal{T}_s \subseteq \mathcal{T}_r \subseteq  \bigcap_{s<r} \mathcal{T}_s  \] thus  $\bigcup_{s>r} \mathcal{T}_s $ and $ \bigcap_{s<r} \mathcal{T}_s $ define twin torsion pairs with heart $\mathcal{P}^{\eta}_r$ and is hence quasi-abelian by Theorem \ref{qab}. We also note that for all twin torsion pairs $[(\mathcal{C}, \mathcal{D}), (\mathcal{C}', \mathcal{D}')]$ in $\mathcal{Q}$ their heart, $\mathcal{C}' \cap \mathcal{D}$, appears as $\mathcal{P}^{\eta}_r$ in the chain of torsion classes
\[ \eta: \quad  0 \subset \mathcal{C} \subset \mathcal{C}' \subset \mathcal{Q}  \]  for some $r \in [0,1]$.
\end{rem}

\begin{setup} \label{setup1} Let $\mathcal{A}$ be an abelian category and fix twin torsion pairs  $[(\mathcal{C}, \mathcal{D})$, $(\mathcal{C}', \mathcal{D}')]$ in $\mathcal{A}$ and set $\mathcal{Q}=\mathcal{C}' \cap \mathcal{D}$. 
By Theorem \ref{torsbij1}, we may identify $\mathfrak{T}(\mathcal{Q})$ bijectively with a subset of  $\mathfrak{T}(\mathcal{A})$ along the map \begin{align*}
\phi_{\mathcal{C}}=\phi : \mathfrak{T}(\mathcal{Q}) & \hookrightarrow  \mathfrak{T}(\mathcal{A}) \\ \eta = (\mathcal{T}_i)_{i \in [0,1]} & \mapsto  \phi(\eta) = (\mathcal{X}_i)_{i \in [0,1]} 
\end{align*}
where 
\[ \mathcal{X}_i  = \left\lbrace \begin{array}{ll} \mathcal{A} & \text{if } i =0 \\ \mathcal{C} \ast \mathcal{T}_{i} & \text{if } i \in (0,1) \\ 0 & \text{if } i =1. \end{array} \right.  \] We denote the image of $\phi_{\mathcal{C}}$ by $\mathfrak{T}_{\mathcal{C}}(\mathcal{Q})$. Thus $\mathfrak{T}_{\mathcal{C}}(\mathcal{Q})$ consists of all $\eta = (\mathcal{T}_i)_{i \in [0,1]} \in \mathfrak{T}(\mathcal{A})$ such that $\mathcal{C} \subseteq \mathcal{T}_i \subseteq \mathcal{C}'$ for all $i \in (0,1)$. We remark that, in light of Remark \ref{twoofhearts}, this map does indeed depend on $\mathcal{C}$ (since then $\mathcal{Q}$ determines $\mathcal{C}'$ by Theorem \ref{torsbij1}). 
\end{setup}

We investigate the subcategories $\mathcal{P}^{\phi(\eta)}_r$.
\begin{lem} \label{slicings}
In the situation of Set-up \ref{setup1}. Let $\eta = (\mathcal{T}_i)_{i \in [0,1]} \in \mathfrak{T}(\mathcal{Q})$. Then 
\[  \mathcal{P}^{\phi(\eta)}_r = \left\lbrace \begin{array}{ll} \mathcal{P}^{\eta}_0 \ast \mathcal{D}'  & \text{if } r =0 \\  \mathcal{P}^{\eta}_{r} & \text{if } r \in (0,1) \\ \mathcal{C} \ast \mathcal{P}^{\eta}_1 & \text{if } r=1. \end{array} \right.  \]
\end{lem}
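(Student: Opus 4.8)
The strategy is to unwind the definitions of $\mathcal{P}^{\phi(\eta)}_r$ in $\mathcal{A}$ by translating the defining torsion pairs through the bijection of Theorem~\ref{torsbij1}, and then to identify the resulting intersections with the claimed expressions. The key observation is that $\phi$ converts the chain $\eta = (\mathcal{T}_i)$ in $\mathcal{Q}$ into the chain $(\mathcal{X}_i)$ in $\mathcal{A}$ with $\mathcal{X}_i = \mathcal{C}\ast\mathcal{T}_i$ for $i\in(0,1)$, and sends the torsionfree classes accordingly to $\mathcal{Y}_i = \mathcal{F}_i \ast \mathcal{D}'$. So first I would record, using Lemma~\ref{uniontors} (applied in $\mathcal{A}$) together with the compatibility of $\phi$ with inclusions, that for $r\in(0,1)$ we have $\bigcap_{s<r}\mathcal{X}_s = \mathcal{C} \ast \big(\bigcap_{s<r}\mathcal{T}_s\big)$ and $\bigcap_{s>r}\mathcal{Y}_s = \big(\bigcap_{s>r}\mathcal{F}_s\big) \ast \mathcal{D}'$. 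The first of these follows because an object $X$ lies in $\mathcal{C}\ast\mathcal{T}_s$ if and only if $X_\mathcal{D}\in\mathcal{T}_s$ by Lemma~\ref{starstar}, so $X$ lies in all of them iff $X_\mathcal{D} \in \bigcap_{s<r}\mathcal{T}_s$, which (again by Lemma~\ref{starstar}, noting $\bigcap_{s<r}\mathcal{T}_s$ is a torsion class in $\mathcal{C}'\cap\mathcal{D}$ by Lemma~\ref{uniontors} and Lemma~\ref{intersection}) is equivalent to $X \in \mathcal{C}\ast\big(\bigcap_{s<r}\mathcal{T}_s\big)$; the second is the dual statement.

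For the middle case $r\in(0,1)$: by definition $\mathcal{P}^{\phi(\eta)}_r = \big(\bigcap_{s<r}\mathcal{X}_s\big)\cap\big(\bigcap_{s>r}\mathcal{Y}_s\big) = \big(\mathcal{C}\ast\bigcap_{s<r}\mathcal{T}_s\big)\cap\big(\bigcap_{s>r}\mathcal{F}_s \ast \mathcal{D}'\big)$. Now $\bigcup_{s>r}\mathcal{T}_s \subseteq \bigcap_{s<r}\mathcal{T}_s$ gives twin torsion pairs in $\mathcal{Q}=\mathcal{C}'\cap\mathcal{D}$, and pulling these back along $\phi$ gives twin torsion pairs in $\mathcal{A}$ whose "outer'' and "inner'' pieces are exactly the two categories just displayed; so Lemma~\ref{reflecthearts} applies directly and identifies the intersection with $\big(\bigcap_{s<r}\mathcal{T}_s\big)\cap\big(\bigcap_{s>r}\mathcal{F}_s\big) = \mathcal{P}^\eta_r$. (Here I should double-check the edge of the index range: $\bigcap_{s<r}\mathcal{T}_s$ and $\bigcap_{s>r}\mathcal{F}_s$ are genuine torsion, resp. torsionfree, classes of $\mathcal{Q}$ by Lemma~\ref{uniontors}, and the corresponding torsion pairs in $\mathcal{A}$ are $\big(\mathcal{C}\ast\bigcap_{s<r}\mathcal{T}_s, (\bigcap_{s<r}\mathcal{F}_s)\ast\mathcal{D}'\big)$ and $\big(\mathcal{C}\ast\bigcap_{s>r}\mathcal{T}_s, (\bigcap_{s>r}\mathcal{F}_s)\ast\mathcal{D}'\big)$ — the latter being the "$\mathcal{T}', \mathcal{F}'$'' torsion pair of Lemma~\ref{reflecthearts} after relabelling.)

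For the boundary cases: when $r=0$, $\mathcal{P}^{\phi(\eta)}_0 = \bigcap_{s>0}\mathcal{Y}_s$ where $\mathcal{Y}_s$ is the torsionfree class of $\mathcal{X}_s = \mathcal{C}\ast\mathcal{T}_s$ in $\mathcal{A}$, i.e. $\mathcal{Y}_s = \mathcal{F}_s\ast\mathcal{D}'$; hence $\mathcal{P}^{\phi(\eta)}_0 = \bigcap_{s>0}(\mathcal{F}_s\ast\mathcal{D}') = \big(\bigcap_{s>0}\mathcal{F}_s\big)\ast\mathcal{D}' = \mathcal{P}^\eta_0\ast\mathcal{D}'$ using the dual of the intersection identity from the first paragraph and the definition $\mathcal{P}^\eta_0 = \bigcap_{s>0}\mathcal{F}_s$. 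The case $r=1$ is dual: $\mathcal{P}^{\phi(\eta)}_1 = \bigcap_{s<1}\mathcal{X}_s = \bigcap_{s<1}(\mathcal{C}\ast\mathcal{T}_s) = \mathcal{C}\ast\big(\bigcap_{s<1}\mathcal{T}_s\big) = \mathcal{C}\ast\mathcal{P}^\eta_1$. The main obstacle I anticipate is purely bookkeeping: making sure that each infinite intersection of torsion(free) classes appearing here is itself a torsion(free) class of the right category (so that Lemmas~\ref{starstar} and~\ref{reflecthearts} are applicable), which is handled by Lemma~\ref{uniontors} and Lemma~\ref{intersection}, and carefully matching the labels $(\mathcal{T},\mathcal{F}),(\mathcal{T}',\mathcal{F}')$ in Lemma~\ref{reflecthearts} to the present inner/outer torsion pairs. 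Once the indexing is pinned down there is no real computation left.
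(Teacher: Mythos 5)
Your proposal is correct and is essentially the paper's own argument: both unwind $\mathcal{P}^{\phi(\eta)}_r$ via Theorem \ref{torsbij1}, commute $\mathcal{C}\ast(-)$ and $(-)\ast\mathcal{D}'$ with the relevant intersections (the paper phrases the torsionfree part as $\bigl(\bigcup_{s>r}(\mathcal{C}\ast\mathcal{T}_s)\bigr)^{\perp}$ and invokes Lemma \ref{uniontors} and Theorem \ref{torsbij1}, while you distribute the intersection using the dual of Lemma \ref{starstar} --- the same computation), and both conclude with Lemma \ref{reflecthearts}, handling $r=0,1$ analogously. The only slip is in your parenthetical pairing: by Lemma \ref{uniontors} the torsionfree class of $\bigcap_{s<r}\mathcal{T}_s$ is $\bigcup_{s<r}\mathcal{F}_s$ and the torsion class of $\bigcap_{s>r}\mathcal{F}_s$ is $\bigcup_{s>r}\mathcal{T}_s$ (not the intersections you wrote), but since $\bigcup_{s>r}\mathcal{T}_s\subseteq\bigcap_{s<r}\mathcal{T}_s$ these do form twin torsion pairs in $\mathcal{Q}$ and Lemma \ref{reflecthearts} applies exactly as you use it.
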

\begin{proof}
Let $r \in (0,1)$, and observe
\begin{align*}
\mathcal{P}^{\phi(\eta)}_r   = & \Bigg( \bigcap_{s \in (0, r)} (\mathcal{C}\ast \mathcal{T}_{s}) \Bigg) \cap   {\Bigg( \bigcup_{s\in(r,1)} (\mathcal{C}\ast \mathcal{T}_{s}) \Bigg)}^{\perp} \\ = &  \Bigg( \mathcal{C}\ast \bigcap_{s \in (0, r)} \mathcal{T}_{s} \Bigg) \cap {\Bigg( \mathcal{C}\ast \bigcup_{s\in(r,1)}  \mathcal{T}_{s} \Bigg)}^{\perp}  \\ = &\Bigg( \mathcal{C}\ast \bigcap_{s \in (0, r)} \mathcal{T}_{s} \Bigg) \cap   {\Bigg( \Big( \bigcup_{s\in(r,1)} \mathcal{T}_{s} \Big)^{\perp_{\mathcal{Q}}} \ast \mathcal{D}' \Bigg)} \\ =&  \Big( \bigcap_{s \in (0, r)} \mathcal{T}_{s} \Big) \cap    \Big( \bigcup_{s\in(r,1)} \mathcal{T}_{s} \Big)^{\perp_\mathcal{Q}} = \mathcal{P}^{\eta}_{r} \end{align*} where the first and last equalities follow from Lemma \ref{uniontors} and the definitions knowing that $\mathcal{T}_1 = 0$ and $\mathcal{T}_0 =\mathcal{Q}$. The second equality is straightforward set theory, the third equality follows from Theorem \ref{torsbij1} and the fourth equality holds by Lemma \ref{reflecthearts}.  The cases $r=0, 1$ follow by similar arguments.
\end{proof}

We now show that every $\eta \in \mathfrak{T}(\mathcal{Q})$ induces a unique Harder-Narasimhan filtration of each object.

\begin{thm} \label{thm1} In the situation of Set-up \ref{setup1}.
Let $\eta = (\mathcal{T}_i)_{i \in [0,1]} \in \mathfrak{T}(\mathcal{Q})$. Then for all $M \in Q$ there exists a unique (up to isomorphism) Harder-Narasimhan filtration of $M$ with respect to $\eta$ in $\mathcal{Q}$. That is, a filtration 
\[ 0 = M_0 	\subsetneq M_1 	\subsetneq \dots 	\subsetneq M_n = M \] of $M$ in $\mathcal{Q}$ such that 
\ben
\item[(HN1)] $M_k / M_{k-1} \in \mathcal{P}^{\eta}_{r_k}$ for all $1\leq k \leq n$.
\item[(HN2)] $r_k > r_{k'}$ if and only if $k< k'$.
\ee

\end{thm}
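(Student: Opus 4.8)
The plan is to reduce to the abelian case along the embedding $\phi$ of Set-up \ref{setup1}, transport the Harder-Narasimhan filtration back, and use Lemma \ref{slicings} to see that it already lives in $\mathcal{Q}$ with the right slices. I will freely use that $\mathcal{C}\cap\mathcal{D}=0=\mathcal{C}'\cap\mathcal{D}'$ (the torsion and torsionfree class of a torsion pair meet only in $0$), that $\mathcal{D}$ is closed under subobjects and $\mathcal{C}'$ under extensions and quotients, and that, by Proposition \ref{exact c'nd}, a pair of composable morphisms in $\mathcal{Q}=\mathcal{C}'\cap\mathcal{D}$ is short exact in $\mathcal{Q}$ exactly when it is short exact in $\mathcal{A}$. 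By Set-up \ref{setup1} we have $\phi(\eta)\in\mathfrak{T}(\mathcal{A})$, so the abelian Harder-Narasimhan theorem of \cite{treffinger2018algebraic} applies to $\phi(\eta)$.

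\emph{Existence.} Apply the abelian theorem to $M\in\mathcal{A}$ and $\phi(\eta)$: there is a filtration $0=N_0\subsetneq N_1\subsetneq\dots\subsetneq N_m=M$ in $\mathcal{A}$ with $N_k/N_{k-1}\in\mathcal{P}^{\phi(\eta)}_{t_k}$ and $t_1>\dots>t_m$. I claim each subquotient lies in $\mathcal{P}^{\eta}_{t_k}$. For $t_k\in(0,1)$ this is the equality $\mathcal{P}^{\phi(\eta)}_{t_k}=\mathcal{P}^{\eta}_{t_k}$ of Lemma \ref{slicings}. If $t_1=1$ then $N_1\in\mathcal{C}\ast\mathcal{P}^{\eta}_1$ by Lemma \ref{slicings}; since $N_1$ is a subobject of $M\in\mathcal{D}$ we have $N_1\in\mathcal{D}$, so in a short exact sequence $0\to C\to N_1\to P\to0$ with $C\in\mathcal{C}$ and $P\in\mathcal{P}^{\eta}_1$ the subobject $C$ lies in $\mathcal{C}\cap\mathcal{D}=0$, whence $N_1\cong P\in\mathcal{P}^{\eta}_1$. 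Dually, if $t_m=0$ then $N_m/N_{m-1}=M/N_{m-1}$ is a quotient of $M\in\mathcal{C}'$, hence lies in $\mathcal{C}'$; in a short exact sequence $0\to P'\to M/N_{m-1}\to D'\to0$ with $P'\in\mathcal{P}^{\eta}_0$ and $D'\in\mathcal{D}'$ the quotient $D'$ lies in $\mathcal{C}'\cap\mathcal{D}'=0$, so $M/N_{m-1}\cong P'\in\mathcal{P}^{\eta}_0$. Thus every $N_k/N_{k-1}\in\mathcal{P}^{\eta}_{t_k}\subseteq\mathcal{Q}$. An induction on $k$ now gives $N_k\in\mathcal{Q}$: $N_0=0$, and if $N_{k-1}\in\mathcal{C}'$ then $N_k$ is an extension of $N_k/N_{k-1}\in\mathcal{C}'$ by $N_{k-1}$, so $N_k\in\mathcal{C}'$, while $N_k\in\mathcal{D}$ as a subobject of $M$. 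By Proposition \ref{exact c'nd} each $0\to N_{k-1}\to N_k\to N_k/N_{k-1}\to0$ is short exact in $\mathcal{Q}$, so $0=N_0\subsetneq\dots\subsetneq N_m=M$ is a filtration in $\mathcal{Q}$ satisfying (HN1) and (HN2) with $r_k=t_k$.

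\emph{Uniqueness.} Let $0=M_0\subsetneq\dots\subsetneq M_n=M$ be any filtration in $\mathcal{Q}$ satisfying (HN1) and (HN2). Each $0\to M_{k-1}\to M_k\to M_k/M_{k-1}\to0$ is short exact in $\mathcal{Q}$, hence in $\mathcal{A}$ by Proposition \ref{exact c'nd}, so the $M_k$ form a chain of subobjects of $M$ in $\mathcal{A}$ with the same subquotients. Moreover $\mathcal{P}^{\eta}_{r}\subseteq\mathcal{P}^{\phi(\eta)}_{r}$ for every $r$: it is an equality for $r\in(0,1)$, and $\mathcal{P}^{\eta}_0\subseteq\mathcal{P}^{\eta}_0\ast\mathcal{D}'=\mathcal{P}^{\phi(\eta)}_0$ and $\mathcal{P}^{\eta}_1\subseteq\mathcal{C}\ast\mathcal{P}^{\eta}_1=\mathcal{P}^{\phi(\eta)}_1$ since $0\in\mathcal{D}'$ and $0\in\mathcal{C}$. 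Hence this filtration is a Harder-Narasimhan filtration of $M$ with respect to $\phi(\eta)$ in $\mathcal{A}$, so by the uniqueness part of the abelian theorem it is isomorphic, as a filtration in $\mathcal{A}$, to the one built above. As $\mathcal{Q}$ is a full subcategory of $\mathcal{A}$, an isomorphism of such filtrations between objects of $\mathcal{Q}$ is an isomorphism in $\mathcal{Q}$, giving the asserted uniqueness.

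\emph{Main obstacle.} The only substantive point — and the only place the twin torsion pair hypothesis is used nontrivially — is verifying that the transported filtration is genuinely a filtration in $\mathcal{Q}$: that the $N_k$ stay in $\mathcal{C}'\cap\mathcal{D}$ and that the boundary slices $\mathcal{P}^{\phi(\eta)}_0$ and $\mathcal{P}^{\phi(\eta)}_1$ collapse onto $\mathcal{P}^{\eta}_0$ and $\mathcal{P}^{\eta}_1$ for an object of $\mathcal{Q}$. This rests on Lemma \ref{slicings}, the vanishings $\mathcal{C}\cap\mathcal{D}=0=\mathcal{C}'\cap\mathcal{D}'$, and the closure properties above; the rest is a transcription of the abelian result.
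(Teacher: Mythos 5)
Your proof is correct and follows essentially the same route as the paper: transport the abelian Harder--Narasimhan filtration of $M$ with respect to $\phi(\eta)$ back to $\mathcal{Q}$, use Lemma \ref{slicings} for the interior slices, handle the boundary slices $r=0,1$ via the torsion-pair closure properties, and show inductively that the filtration stays in $\mathcal{Q}$, with uniqueness inherited from the abelian case. The only cosmetic differences are that you reprove the boundary identities directly from $\mathcal{C}\cap\mathcal{D}=0=\mathcal{C}'\cap\mathcal{D}'$ where the paper cites Theorem \ref{torsbij1}, and you spell out the uniqueness transfer (via $\mathcal{P}^{\eta}_r\subseteq\mathcal{P}^{\phi(\eta)}_r$ and Proposition \ref{exact c'nd}) slightly more explicitly than the paper does.
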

\begin{proof}
Let $M \in \mathcal{Q}$ and  $\eta = (\mathcal{T}_i)_{i \in [0,1]} \in \mathfrak{T}(\mathcal{Q})$. 
Let 
\[ 0 = M_0 \subset M_1 \subset \dots \subset M_n = M \] be the Harder-Narasimhan filtration of $M$  with respect to $\phi(\eta)$ in $\mathcal{A}$, which exists by \cite[2.9]{treffinger2018algebraic}, so that for all $1 \leq k, k' \leq n$, $M_k / M_{k-1} \in \mathcal{P}^{\phi(\eta)}_{r_k}$ and $r_k > r_{k'}$ precisely when $k < k'$. We claim that this is also  the Harder-Narasimhan filtration of $M$ with respect to $\eta$ in $\mathcal{Q}$. 

We first show that for all $1 \leq k \leq n$, $M_k / M_{k-1} \in \mathcal{P}^{\eta}_{r_k}$. When $r_k \in (0,1)$, this is trivially true by Lemma \ref{slicings}. It remains to check for $r_k =0,1$. Observe that the only case where $r_k = 0$ (resp. $r_k = 1$) can occur is when $k=n$ (resp. $k=1$). So suppose that $r_n=0$, then $M_n / M_{n-1} = M / M_{n-1} \in  \mathcal{P}^{\phi(\eta)}_{0}   = \mathcal{P}^{\eta}_0 \ast \mathcal{D}'$. As $M \in \mathcal{Q} $ is an element of  $\mathcal{C}'$, so is the quotient $ M / M_{n-1}$. Thus $ M / M_{n-1} \in \mathcal{C}' \cap (\mathcal{P}^{\eta}_0 \ast \mathcal{D}') = \mathcal{P}^{\eta}_0$ by Theorem \ref{torsbij1}. 
Similarly, we see that $M_1 / M_0 = M_1 \in \mathcal{D} \cap \mathcal{P}^{\phi(\eta)}_1 = \mathcal{D} \cap (\mathcal{C} \ast \mathcal{P}^{\eta}_1)= \mathcal{P}^{\eta}_1$. By Lemma \ref{slicings}, this implies that (HN1) holds. Note that (HN2) holds since it is inherited from the abelian case $\mathcal{A}$ as is the uniqueness of the filtration up to isomorphism.

It remains to show that $M_i \in \mathcal{Q}$ for all $1 \leq k \leq n $. We proceed by induction on $k$. For $k=1$ we have shown that $M_1 = M_1 / M_0 \in  \mathcal{P}^{\eta}_{r_1} \subset \mathcal{Q}$ for some $r_1 \in [0,1]$. The $k>1$ case follows by using the short exact sequences 
\[ \begin{tikzcd} 0 \arrow[r] & M_{k-1} \arrow[r] & M_k \arrow[r] & M_k / M_{k-1} \arrow[r] & 0 \end{tikzcd} \] as $M_k / M_{k-1} \in \mathcal{P}^{\eta}_{r_k} \subset \mathcal{Q}$ and since $\mathcal{Q}$ is closed under extensions. 
\end{proof}

\begin{cor} \label{cor1}
 Let $\mathcal{Q}$ be a quasi-abelian category and $\eta = (\mathcal{T}_i)_{i \in [0,1]} \in \mathfrak{T}(\mathcal{Q})$. Then for all $M\in \mathcal{Q}$ there exists a unique (up to isomorphism) Harder-Narasimhan filtration with respect to $\eta$ in $\mathcal{Q}$.
\end{cor}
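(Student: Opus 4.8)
The plan is to deduce the corollary from Theorem \ref{thm1} by exhibiting $\mathcal{Q}$ itself as the heart of a pair of twin torsion pairs in an abelian category. By Lemma \ref{QinL}, $\mathcal{Q}$ sits inside its left associated abelian category $\mathcal{L}_\mathcal{Q}$ as a torsionfree class; write $({}^{\perp}\mathcal{Q}, \mathcal{Q})$ for the corresponding torsion pair in $\mathcal{L}_\mathcal{Q}$. Then $[({}^{\perp}\mathcal{Q}, \mathcal{Q}),\ (\mathcal{L}_\mathcal{Q}, 0)]$ is a pair of twin torsion pairs in $\mathcal{L}_\mathcal{Q}$, since the required inclusion ${}^{\perp}\mathcal{Q} \subseteq \mathcal{L}_\mathcal{Q}$ is automatic, and its heart is $\mathcal{L}_\mathcal{Q} \cap \mathcal{Q} = \mathcal{Q}$. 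This is exactly the situation of Set-up \ref{setup1}, with $\mathcal{A} = \mathcal{L}_\mathcal{Q}$, $\mathcal{C} = {}^{\perp}\mathcal{Q}$, $\mathcal{D} = \mathcal{Q}$, $\mathcal{C}' = \mathcal{L}_\mathcal{Q}$ and $\mathcal{D}' = 0$, so that $\mathcal{Q} = \mathcal{C}' \cap \mathcal{D}$.

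With this identification in place, the statement is immediate: given $\eta \in \mathfrak{T}(\mathcal{Q})$ and $M \in \mathcal{Q}$, Theorem \ref{thm1} produces a Harder-Narasimhan filtration of $M$ with respect to $\eta$ inside $\mathcal{Q} = \mathcal{C}' \cap \mathcal{D}$, unique up to isomorphism and satisfying (HN1) and (HN2) for the subcategories $\mathcal{P}^{\eta}_r$. Since the definitions of $\mathfrak{T}(\mathcal{Q})$, of the torsion pairs of $\mathcal{Q}$, and of the subcategories $\mathcal{P}^{\eta}_r$ are intrinsic to the pre-abelian category $\mathcal{Q}$, they agree with the data appearing in Theorem \ref{thm1} when $\mathcal{Q}$ is viewed as $\mathcal{C}' \cap \mathcal{D}$; hence the filtration of Theorem \ref{thm1} is precisely a Harder-Narasimhan filtration of $M$ with respect to $\eta$ in $\mathcal{Q}$ in the sense required here.

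There is essentially no obstacle, as the substantive work is already carried out in Theorem \ref{thm1} (and, upstream, in Lemma \ref{slicings}, Theorem \ref{torsbij1}, and \cite[2.9]{treffinger2018algebraic}). The only point meriting a line of verification is that Theorem \ref{thm1} and Set-up \ref{setup1} impose no hypotheses on the ambient abelian category beyond abelianness — in particular no length, noetherianity, or enough-projectives assumptions — so that they apply to $\mathcal{L}_\mathcal{Q}$ for an arbitrary quasi-abelian $\mathcal{Q}$; inspecting the cited results confirms this. Thus the corollary follows in one step.

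\begin{proof}
By Lemma \ref{QinL}, $\mathcal{Q}$ is a torsionfree class in $\mathcal{L}_\mathcal{Q}$; let $({}^{\perp}\mathcal{Q}, \mathcal{Q})$ be the associated torsion pair in $\mathcal{L}_\mathcal{Q}$. Then $[({}^{\perp}\mathcal{Q}, \mathcal{Q}),\ (\mathcal{L}_\mathcal{Q}, 0)]$ are twin torsion pairs in $\mathcal{L}_\mathcal{Q}$ with heart $\mathcal{L}_\mathcal{Q} \cap \mathcal{Q} = \mathcal{Q}$. Applying Theorem \ref{thm1} with $\mathcal{A} = \mathcal{L}_\mathcal{Q}$, $\mathcal{C} = {}^{\perp}\mathcal{Q}$, $\mathcal{D} = \mathcal{Q}$, $\mathcal{C}' = \mathcal{L}_\mathcal{Q}$, $\mathcal{D}' = 0$, so that $\mathcal{Q} = \mathcal{C}' \cap \mathcal{D}$, yields for every $\eta \in \mathfrak{T}(\mathcal{Q})$ and every $M \in \mathcal{Q}$ a Harder-Narasimhan filtration of $M$ with respect to $\eta$ in $\mathcal{Q}$, unique up to isomorphism.
\end{proof}
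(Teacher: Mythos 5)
Your proof is correct and follows essentially the same route as the paper: the paper also realises $\mathcal{Q}$ as the heart of the twin torsion pairs coming from the chain $0 \subset {}^{\perp}\mathcal{Q} \subset \mathcal{L}_\mathcal{Q}$ in $\mathcal{L}_\mathcal{Q}$ (via Lemma \ref{QinL}) and then invokes Theorem \ref{thm1}.
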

\begin{proof} The result follows from Theorem \ref{thm1} as $\mathcal{Q}$ appears as $\mathcal{P}^\eta_r$ for some $r \in [0,1]$ in the chain of torsion classes
\[ \eta: \quad  0 \subset {}^{\perp}\mathcal{Q} \subset \mathcal{L}_\mathcal{Q} \]  in $\mathfrak{T}(\mathcal{L}_\mathcal{Q})$.
\end{proof}

We recall that, by \cite[6.1]{bridgeland2007stability} and \cite[7.1]{treffinger2018algebraic}, for an abelian category $\mathcal{A}$, $\mathfrak{T}(\mathcal{A})$ is a topological space with pseudometric given by 
\begin{equation} \label{topspace} 
d(\eta, \eta') = \mathsf{inf}\{\varepsilon \in [0,1] \ | \ \mathcal{P}^{\eta'}_r \subset \mathcal{P}^{\eta}_{[r-\varepsilon, r +\varepsilon]} \forall r \in [0,1] \}
\end{equation}for $\eta, \eta' \in \mathfrak{T}(\mathcal{A})$. Where 
\[ \mathcal{P}^{\eta}_{[a,b]} := \mathsf{Filt}\Big( \bigcup_{s \in [a,b]} \mathcal{P}^{\eta}_s \Big) 
\] for $0 \leq a \leq b \leq 1$ and we set $\mathcal{P}^\eta_r=0$ for all $r\not\in [0,1]$.

\begin{rem} \label{zero distance} Note that, for $\eta, \eta' \in \mathfrak{T}(\mathcal{A})$, we have $d(\eta, \eta') = 0 $ if and only if $\mathcal{P}^{\eta}_r = \mathcal{P}^{\eta'}_r$ for all $r \in [0,1]$. \end{rem}

As we remarked earlier, the embedding of $\mathfrak{T}(\mathcal{Q})$ (of Set-up \ref{setup1}) in $\mathfrak{T}(\mathcal{A})$ depends on $\mathcal{C}$. So when $\mathcal{Q}$ occurs as the heart of many twin torsion pairs, $\mathfrak{T}(\mathcal{Q})$ can be embedded into $\mathfrak{T}(\mathcal{A})$ in as many ways. To finish, we see  that the various embeddings of $\mathcal{Q}$ are at maximal distance apart in $\mathfrak{T}(\mathcal{A})$ and that each of these embeddings is closed.

\begin{thm}
In the situation of Set-up \ref{setup1}, $\mathfrak{T}_{\mathcal{C}}(\mathcal{Q})$ is a closed set of the topological space $\mathfrak{T}(\mathcal{A})$ and if $\mathcal{Q}$ also occurs as the heart of different twin torsion pairs $[(\mathcal{C}_1, \mathcal{D}_1)$, $(\mathcal{C}_1', \mathcal{D}_1')]$, then  \[ d(\mathfrak{T}_{\mathcal{C}}(\mathcal{Q}), \mathfrak{T}_{\mathcal{C}_1}(\mathcal{Q}))=1 \] where $\mathfrak{T}_{\mathcal{C}_1}(\mathcal{Q})$ is defined following Set-up \ref{setup1}.
\end{thm}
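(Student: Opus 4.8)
The plan is to establish the two assertions separately, both by translating the relevant conditions through the bijection of Theorem~\ref{torsbij1} and computing the slices $\mathcal{P}^{\phi(\eta)}_r$ via Lemma~\ref{slicings}. First I would prove that $\mathfrak{T}_{\mathcal{C}}(\mathcal{Q})$ is closed. Recall from Set-up~\ref{setup1} that $\mathfrak{T}_{\mathcal{C}}(\mathcal{Q})$ consists exactly of those $(\mathcal{T}_i)_{i\in[0,1]} \in \mathfrak{T}(\mathcal{A})$ with $\mathcal{C} \subseteq \mathcal{T}_i \subseteq \mathcal{C}'$ for all $i \in (0,1)$. So I would take a sequence (or net) $(\eta^{(n)})$ in $\mathfrak{T}_{\mathcal{C}}(\mathcal{Q})$ converging to some $\eta \in \mathfrak{T}(\mathcal{A})$ and show $\eta \in \mathfrak{T}_{\mathcal{C}}(\mathcal{Q})$. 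The key observation is that $d(\eta^{(n)}, \eta) \to 0$, together with Remark~\ref{zero distance}-type reasoning at the level of slices, forces containment of the relevant torsion classes: since each $\mathcal{P}^{\eta^{(n)}}_s$ for $s \in (0,1)$ lies in $\mathcal{C}'$ (as $\mathcal{X}_s = \mathcal{C}\ast\mathcal{T}_s \subseteq \mathcal{C}'$ is closed under extensions and one reconstructs $\mathcal{T}_i$ from the slices via $\mathcal{T}_r = \mathsf{Filt}(\bigcup_{s \ge r}\mathcal{P}^{\eta}_s)$-style formulas), and convergence in the pseudometric moves slices only by arbitrarily small amounts of the parameter, the limiting slices $\mathcal{P}^{\eta}_s$ for $s \in (0,1)$ must again lie in $\mathcal{C}'$ and contain $\mathcal{C}$; reconstructing $\mathcal{T}_i$ from the slices then gives $\mathcal{C} \subseteq \mathcal{T}_i \subseteq \mathcal{C}'$ on $(0,1)$, i.e. $\eta \in \mathfrak{T}_{\mathcal{C}}(\mathcal{Q})$. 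The precise bookkeeping — showing that a slice $\mathcal{P}^\eta_s$ with $s$ strictly interior stays strictly interior and hence is not contaminated by the ``boundary'' slices at $0$ and $1$ (where $\mathcal{X}_0 = \mathcal{A}$, $\mathcal{X}_1 = 0$) — is where I expect to spend the most care.

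For the distance statement, suppose $\mathcal{Q}$ also arises as the heart of $[(\mathcal{C}_1,\mathcal{D}_1), (\mathcal{C}_1',\mathcal{D}_1')]$ with $\mathcal{C}_1 \ne \mathcal{C}$ (by Theorem~\ref{torsbij1}, $\mathcal{C}$ together with $\mathcal{Q}$ determines $\mathcal{C}'$, and likewise for $\mathcal{C}_1$; so the two embeddings genuinely differ). I want to show $d(\mathfrak{T}_{\mathcal{C}}(\mathcal{Q}), \mathfrak{T}_{\mathcal{C}_1}(\mathcal{Q})) = 1$, i.e. for every $\eta \in \mathfrak{T}_{\mathcal{C}}(\mathcal{Q})$, $\eta' \in \mathfrak{T}_{\mathcal{C}_1}(\mathcal{Q})$ and every $\varepsilon < 1$, we have $\mathcal{P}^{\eta'}_r \not\subseteq \mathcal{P}^{\eta}_{[r-\varepsilon, r+\varepsilon]}$ for some $r$. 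The natural choice is $r$ near $0$ or near $1$, where the boundary behaviour is rigid. For instance, at $r = 1$: by Lemma~\ref{slicings}, $\mathcal{P}^{\eta'}_1 = \mathcal{C}_1 \ast \mathcal{P}^{\eta'(\text{in }\mathcal{Q})}_1$, which contains $\mathcal{C}_1$; whereas $\mathcal{P}^{\eta}_{[1-\varepsilon,1]} = \mathsf{Filt}(\bigcup_{s \ge 1-\varepsilon}\mathcal{P}^{\eta}_s)$. For $s \in [1-\varepsilon, 1)$ the slice $\mathcal{P}^{\eta}_s$ is contained in $\mathcal{C}'$ hence in no way can produce objects of $\mathcal{C}_1 \setminus \mathcal{C}'$ unless $\mathcal{C}_1 \subseteq \mathcal{C}'$; and $\mathcal{P}^{\eta}_1 = \mathcal{C}\ast\mathcal{P}^{\eta(\text{in }\mathcal{Q})}_1$. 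The point is that if $\mathcal{C}_1 \not\subseteq \mathcal{C}$, there is an object $C_1 \in \mathcal{C}_1 \setminus \mathcal{C}$ whose $(\mathcal{C},\mathcal{D})$-canonical sequence is nontrivial; using that $\mathcal{C}_1 \ast \mathcal{T}$-membership is detected by $(-)_\mathcal{D}$ landing in $\mathcal{T}$ (Lemma~\ref{starstar} and its $\mathcal{C}_1$-analogue), one checks $C_1 \notin \mathcal{P}^{\eta}_{[1-\varepsilon,1]}$ for any $\varepsilon < 1$ — essentially because the filtration layers available on the $\eta$ side near $r=1$ are all forced into $\mathcal{C}' \cup (\mathcal{C}\ast\cdots)$, which cannot build $C_1$. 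If instead $\mathcal{C}_1 \subseteq \mathcal{C}$ (so the asymmetry shows up on the torsionfree side, $\mathcal{D}_1' \supseteq \mathcal{D}'$ strictly), I would run the dual argument at $r = 0$ using $\mathcal{P}^{\eta'}_0 = \mathcal{P}^{\eta'(\text{in }\mathcal{Q})}_0 \ast \mathcal{D}_1'$.

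The main obstacle, in both parts, is making the informal phrase ``a slice near the boundary cannot be approximated by slices strictly inside'' into a rigorous argument about $\mathsf{Filt}$ of unions of slices. Concretely: I must verify that $\mathsf{Filt}\bigl(\bigcup_{s \in [1-\varepsilon,1]} \mathcal{P}^{\eta}_s\bigr)$ does \emph{not} contain a fixed object $C_1 \in \mathcal{C}_1 \setminus (\mathcal{C}_1 \cap \mathcal{C})$ (resp.\ $\mathcal{C}_1' \setminus \mathcal{C}'$), and for this I would use that $\bigcup_{s \ge 1-\varepsilon}\mathcal{T}_s^\eta$ is itself a torsion class (Lemma~\ref{uniontors}) strictly smaller than $\mathcal{T}_0 = \mathcal{A}$, indeed contained in $\mathcal{C}\ast(\text{something in }\mathcal{C}'\cap\mathcal{D})$, hence closed under extensions and not containing $C_1$; since $\mathcal{P}^\eta_{[1-\varepsilon,1]} \subseteq \bigcup_{s\ge 1-\varepsilon}\mathcal{T}_s^\eta$ (the whole filtration built from layers in torsion classes at parameters $\ge 1-\varepsilon$ stays inside the largest such torsion class), we are done. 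I would close the argument by noting $d \le 1$ always holds trivially since all parameters lie in $[0,1]$, so the infimum defining $d$ is exactly $1$.
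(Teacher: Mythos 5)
Your treatment of the distance statement has a genuine gap at exactly the step you flag as the main obstacle. You want to conclude that a fixed $C_1\in\mathcal{C}_1\setminus\mathcal{C}$ does not lie in $\mathcal{P}^{\eta}_{[1-\varepsilon,1]}$ for $\varepsilon<1$, and the reason you offer --- the available layers near $r=1$ lie in $\mathcal{C}'$ (or in $\mathcal{C}\ast(\text{torsion classes of }\mathcal{Q})$), ``which cannot build $C_1$'' --- is circular as stated: since every slice $\mathcal{P}^{\eta}_s$ with $s\geq 1-\varepsilon>0$ is contained in a torsion class $\mathcal{T}^{\eta}_t\subseteq\mathcal{C}'$ with $0<t<1-\varepsilon$, and $\mathcal{C}'$ is extension closed, one gets $\mathcal{P}^{\eta}_{[1-\varepsilon,1]}\subseteq\mathcal{C}'$, so what you are really asserting is $C_1\notin\mathcal{C}'$; but $C_1\in\mathcal{C}_1\setminus\mathcal{C}$ does not obviously exclude $C_1\in\mathcal{C}'=\mathcal{C}\ast\mathcal{Q}$, and you acknowledge the possibility ``unless $\mathcal{C}_1\subseteq\mathcal{C}'$'' without ruling it out. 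The missing step is the inclusion $\mathcal{C}_1\cap\mathcal{C}'\subseteq\mathcal{C}$: if $Z\in\mathcal{C}_1\cap\mathcal{C}'$ then $Z_{\mathcal{D}}\in\mathcal{Q}$ (Lemma~\ref{starstar}), while $Z_{\mathcal{D}}$ is a quotient of $Z\in\mathcal{C}_1$ and $\mathcal{Q}\subseteq\mathcal{D}_1$, so $Z_{\mathcal{D}}\in\mathcal{C}_1\cap\mathcal{D}_1=0$ and $Z\cong{}_{\mathcal{C}}Z\in\mathcal{C}$. With this (and its dual $\mathcal{D}_1'\cap\mathcal{D}\subseteq\mathcal{D}'$ for your second case at $r=0$, plus attention to both directions of the asymmetric infimum) your two-case argument does go through. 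The paper sidesteps the case split and this lemma entirely by a sharper witness: take $Y\in\mathcal{D}'\setminus\mathcal{D}_1'$ and set $X={}_{\mathcal{C}_1}Y$; then $X\neq 0$, $X\in\mathcal{C}_1\subseteq\mathcal{P}^{\eta_1}_1$, and since $\mathcal{D}'$ is closed under subobjects, $X\in\mathcal{D}'\subseteq\mathcal{P}^{\eta}_0$. A single nonzero object in the top slice of every $\eta_1\in\mathfrak{T}_{\mathcal{C}_1}(\mathcal{Q})$ and the bottom slice of every $\eta\in\mathfrak{T}_{\mathcal{C}}(\mathcal{Q})$ defeats $\mathcal{P}^{\eta_1}_1\subseteq\mathcal{P}^{\eta}_{[1-\varepsilon,1]}$ for all $\varepsilon<1$, because $\mathcal{P}^{\eta}_{[1-\varepsilon,1]}\subseteq\mathcal{T}^{\eta}_s$ while $X\in\mathcal{F}^{\eta}_s$ for any $0<s<1-\varepsilon$, forcing $X=0$.

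Your closedness argument is essentially the paper's (membership in $\mathfrak{T}_{\mathcal{C}}(\mathcal{Q})$ is detected by slices, and proximity in the pseudometric controls slices), but two statements need repair. Interior slices do not ``contain $\mathcal{C}$'': for $s\in(0,1)$ they lie in $\mathcal{Q}$ and $\mathcal{C}\cap\mathcal{Q}=0$. What you actually need is $\mathcal{C}\subseteq\mathcal{P}^{\eta}_1$ and $\mathcal{D}'\subseteq\mathcal{P}^{\eta}_0$, which immediately give $\mathcal{C}\subseteq\mathcal{P}^{\eta}_1\subseteq\mathcal{T}_i$ and $\mathcal{D}'\subseteq\mathcal{P}^{\eta}_0\subseteq\mathcal{F}_i$, hence $\mathcal{T}_i\subseteq{}^{\perp}\mathcal{D}'=\mathcal{C}'$, for all $i\in(0,1)$; no reconstruction of $\mathcal{T}_r$ as $\mathsf{Filt}$ of slices is required (and your proposed formula is delicate at the endpoint $s=r$). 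Concretely, if $d(\eta,\eta^{(n)})\leq\varepsilon_n$ then $\mathcal{C}\subseteq\mathcal{P}^{\eta^{(n)}}_1\subseteq\mathcal{P}^{\eta}_{[1-\varepsilon_n,1]}\subseteq\mathcal{T}^{\eta}_s$ for all $s<1-\varepsilon_n$, and letting $\varepsilon_n\to 0$ yields $\mathcal{C}\subseteq\mathcal{P}^{\eta}_1$; dually for $\mathcal{D}'$. Once phrased this way, your sequence formulation is sound and in fact covers general accumulation points, whereas the paper argues only for points at distance zero from a single element via Remark~\ref{zero distance}.
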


\begin{proof}
First, we show that $\mathfrak{T}_{\mathcal{C}}(\mathcal{Q})$ contains all of its accumulation points and is therefore a closed set of $\mathfrak{T}(\mathcal{A})$. To this end, let  $\eta = (\mathcal{T}_i)_{i \in [0,1]} \in \mathfrak{T}(\mathcal{A})$ such that there exists $\eta' \in \mathfrak{T}_{\mathcal{C}}(\mathcal{Q})$ with $d(\eta, \eta')= d(\eta', \eta)=0$. Then by,   Remark \ref{zero distance}, for all $r\in [0,1]$, $\mathcal{P}^{\eta}_r = \mathcal{P}^{\eta'}_r$. In particular, $\mathcal{P}^{\eta}_1 \subseteq \mathcal{C}$ and $\mathcal{P}^{\eta}_0 \subseteq \mathcal{D}'$, thus $\mathcal{C} \subseteq \mathcal{T}_i \subseteq \mathcal{C}'$ for all $i \in (0,1)$ and we conclude that $\eta \in \mathfrak{T}_{\mathcal{C}}(\mathcal{Q})$. 

Now suppose that $\mathcal{Q} = \mathcal{C}_1' \cap \mathcal{D}_1$ for some twin torsion pair $[(\mathcal{C}_1, \mathcal{D}_1)$, $(\mathcal{C}_1', \mathcal{D}_1')]$ with $\mathcal{C} \not = \mathcal{C}_1$. We show that for all $\eta \in \mathfrak{T}_{\mathcal{C}}(\mathcal{Q})$ and $\eta_1 \in \mathfrak{T}_{\mathcal{C}_1}(\mathcal{Q})$, $d(\eta, \eta_1)=1$. By  the formula (\ref{topspace}), it is enough to show the existence of some \begin{equation} \label{distance}
0\not =X \in (\mathcal{P}^{\eta}_0 \backslash \mathcal{P}^{\eta_1}_0) \cap (\mathcal{P}^{\eta_1}_1 \backslash \mathcal{P}^{\eta}_1).
\end{equation}  
Let $Y \in \mathcal{D}'_0 \backslash \mathcal{D}'_1$, we claim that $X := {}_{\mathcal{C}_1}Y$ satisfies (\ref{distance}). Clearly, $X  \in \mathcal{C}_1 \subseteq \mathcal{P}^{\eta_1}_1$ and, as $\mathcal{D}'_0$ is closed under subobjects, $X \in \mathcal{D}'_0 \subseteq \mathcal{P}^{\eta}_0$. 

We now show that $X \not\in \mathcal{P}^{\eta_1}_0 = \mathcal{D}_1' \ast \mathcal{P}^{\phi_{\mathcal{C}_{1}}^{-1}(\eta_1)}_0$. Observe that as $X\in \mathcal{C}_1'$, $X_{\mathcal{D}'_1}=0$, thus, by Lemma \ref{starstar}, $X \in  \mathcal{P}^{\eta_1}_0$ if and only if $X \in \mathcal{P}^{\phi_{\mathcal{C}_{1}}^{-1}(\eta_1)}_0 \subset Q$. But as $X \in \mathcal{C}_1$, $X \not\in \mathcal{Q}$ and in particular, $X \not\in \mathcal{P}^{\eta_1}_0$. Similarly, one verifies that $X \not \in \mathcal{P}^{\eta_0}_1$ and we are done. 
\end{proof}
\bibliographystyle{halpha-abbrv}
\bibliography{biv}
  \par
  \medskip
  \begin{tabular}{@{}l@{}}%
    \textsc{School of Mathematics, University of Leicester, Leicester,} \\ \textsc{le1 7rh, uk}\\
    \texttt{ast20@le.ac.uk}
  \end{tabular}
\end{document}